\newtheorem{prop}{Proposition}[section]
\newtheorem{thm}[prop]{Theorem}
\newtheorem{cor}[prop]{Corollary}
\newtheorem{ques}[prop]{Question}
\newtheorem{prob}[prop]{Problem}
\newtheorem{lem}[prop]{Lemma}
\newtheorem{adden}[prop]{Addendum}
\theoremstyle{definition}
\newtheorem{de}[prop]{Definition}
\newtheorem{example}[prop]{Example}
\newtheorem{examples}[prop]{Examples}
\theoremstyle{remark}
\newtheorem{Remark}[prop]{Remark}             
\newtheorem{Remarks}[prop]{Remarks}             
\def\C{{\mathbb C}}
\def\CP{{\mathbb C P}}
\def\Z{{\mathbb Z}}
\def\R{{\mathbb R}}
\def\Q{{\mathbb Q}}
\def\G{{\mathcal G}}
\def\D{{\mathcal D}}
\def\S4{{\mathcal S}}
\def\inter{\mathop{\rm int}\nolimits}
\def\cl{\mathop{\rm cl}\nolimits}
\def\max{\mathop{\rm max}\nolimits}
\def\id{\mathop{\rm id}\nolimits}
\def\GL{\mathop{\rm GL}\nolimits}
\def\SO{\mathop{\rm SO}\nolimits}
\def\SL{\mathop{\rm SL}\nolimits}
\def\co{\colon\thinspace}
\def\st{\thinspace |\thinspace}
\def\blow{\mathop{\overline{\C P^2}}\nolimits}
\def\blowst{stably }
\begin{document}
\title{Group actions, corks and exotic smoothings of $\R^4$}
\author{Robert E. Gompf}
\address{The University of Texas at Austin, Mathematics Department RLM 8.100, Attn: Robert Gompf,
2515 Speedway Stop C1200, Austin, Texas 78712-1202}
\email{gompf@math.utexas.edu}
\begin{abstract} We provide the first information on  diffeotopy groups of exotic smoothings of $\R^4$: For each of uncountably many smoothings, there are uncountably many isotopy classes of self-diffeomorphisms. We realize these by various explicit group actions. There are also actions at infinity by nonfinitely generated groups, for which no nontrivial element extends over the whole manifold. In contrast, every diffeomorphism of the end of the universal $\R^4$ extends. Our techniques apply to many other open 4-manifolds, and are related to cork theory. We show that under broad hypotheses, cork twisting is equivalent (up to blowups) to twisting on an exotic $\R^4$, and give applications.
\end{abstract}
\maketitle


\section{Introduction}\label{Intro}

One of the most surprising consequences of the 1980s foundational breakthroughs in 4-manifold topology was the existence of an exotic $\R^4$---a smooth manifold homeomorphic to $\R^4$ but not diffeomorphic to it. Such manifolds were soon seen to occur with  uncountably many diffeomorphism types. This contrasted sharply with previously established topology in dimensions $n\ne 4$, where every homeomorphism to $\R^n$ could be perturbed to a diffeomorphism by a $C^0$-small isotopy. (The corresponding statement is true for all manifolds when $n<4$, and when $n>4$ it follows since the contractible space $\R^n$ allows no obstructions.) One might view this anomaly as a shortage of diffeomorphisms in dimension 4. This failure of existence raises the complementary question about classifying self-diffeomorphisms of manifolds $R$ homeomorphic to $\R^4$. There are many discrete group actions on $\R^4$ that can easily be used to construct an action on an exotic $\R^4$. However, these do not immediately provide information about the diffeotopy group $\D(R)$ of (ambient) isotopy classes of diffeomorphisms of $R$, or its orientation-preserving subgroup $\D_+(R)$. In fact, it is well known that $\D_+(\R^n)$ is the trivial group for all $n$, in spite of the plethora of group actions on $\R^n$. There appears to be no previously known information about $\D_+(R)$ for any exotic $\R^4$ (although the quotient $\D(R)/\D_+(R)$ is either trivial or $\Z_2$, and it is known that both possibilities are realized). In the present article we study this problem, using recent developments in the related theory of corks. We show that there are uncountably many choices of $R$ (both large and small) for which $\D_+(R)$ is uncountable. We also see that the diffeotopy group at infinity of these manifolds is uncountable, and has a nonfinitely generated subgroup whose nontrivial elements cannot extend to diffeomorphisms of $R$. Our techniques generalize to a much larger class of open 4-manifolds. We also study the relation between these phenomena and corks, and the extent to which the diffeomorphism type of a closed 4-manifold can be varied by cutting out an exotic $\R^4$ and regluing by diffeomorphisms at infinity.

To construct an exotic $\R^4$ with nontrivial diffeotopy, we begin with a $G$-action on $\R^4$, and extend it to a $G$-action on $R$, a suitably constructed exotic $\R^4$. We then use a trick from cork theory to prove that the induced homomorphism $G\to\D(R)$ is injective. We can choose $G$ from a wide collection of groups. For example, we can take $G$ to be a direct product of infinitely many copies of $\Z$ or $\Q$, obtaining:

\begin{thm} There is an uncountable group action on $R$ that injects into $\D_+(R)$.
\end{thm}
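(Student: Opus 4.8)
The plan is to construct $R$ together with a $G$-action explicitly, where $G=\prod_{i\in\N}\Z$ (or $\prod_{i\in\N}\Q$) realized as an uncountable group, and then to detect the nontriviality of the induced map $G\to\D_+(R)$ using a cork-theoretic invariant. First I would fix a standard smooth action of $G$ on $\R^4$ by commuting diffeomorphisms---for a product of copies of $\Z$ one naturally takes an action generated by commuting translations or, more usefully for detection, an action supported in disjoint regions so that distinct group elements have controllably different behavior on an exhausting family of compact sets. The key construction is to choose an exotic smoothing $R$ of $\R^4$ that is $G$-invariant; I expect this to be arranged by building $R$ as a union of pieces, each carrying nonstandard smooth structure, permuted or fixed compatibly with the $G$-action, so that the action on the topological $\R^4$ lifts to a smooth action on $R$. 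This is the step where one must be careful: the exotic structure must be chosen so that it is genuinely preserved by every element of $G$, and one wants infinitely many ``independent'' exotic regions, one associated to each factor of $G$, so that the group can act with infinite-dimensional freedom.

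The heart of the argument, and the step I expect to be the main obstacle, is proving injectivity of $G\to\D_+(R)$, i.e.\ that no nontrivial $g\in G$ is ambiently isotopic to the identity. Here I would deploy the ``trick from cork theory'' promised in the introduction: associate to each $g$ a cork-twisting operation on some auxiliary closed or bounded $4$-manifold, and use an invariant (such as a Seiberg--Witten or Donaldson-type invariant, or a relative invariant detecting cork twists) that changes under a nontrivial twist. Concretely, if $g$ were isotopic to $\id_R$, then regluing a fixed closed manifold $X$ along $R$ by $g$ would produce a manifold diffeomorphic to $X$ itself; but the cork-twist interpretation of $g$ shows this regluing alters a gauge-theoretic invariant whenever $g$ is nontrivial, yielding a contradiction. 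The delicate point is ensuring that the algebraic independence of the factors of $G$ is faithfully reflected in independent changes of the invariant, so that the entire uncountable group---not merely a single $\Z$ or $\Z_2$---injects.

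Once injectivity is in place, I would verify that $G$ is genuinely uncountable as abstractly constructed: the full unrestricted direct product $\prod_{i\in\N}\Z$ has cardinality of the continuum, so an injection of it into $\D_+(R)$ immediately gives an uncountable subgroup, proving the theorem. I would also remark that orientation-preservation is automatic because each chosen generator is isotopic to the identity on the underlying topological manifold and hence orientation-preserving, placing the whole image inside $\D_+(R)$ rather than merely $\D(R)$. In summary, the proof reduces to (i) an equivariant construction of an exotic $R$ carrying the $G$-action, which is geometric bookkeeping, and (ii) a cork-theoretic/gauge-theoretic detection argument establishing injectivity, which is where the real content lies.
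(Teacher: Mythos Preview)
Your overall architecture is right---build $R$ as an infinite end sum of copies of a small exotic $\R^4$ (say $R_S$) along a $G$-equivariant family of rays, so that $G$ acts by permuting the summands---but your proposed detection mechanism has a genuine gap. You suggest showing that if $g$ were isotopic to the identity then regluing a closed $X$ along $R$ by $g$ would not change $X$, whereas a gauge-theoretic invariant would change. But $g$ is already a global self-diffeomorphism of $R$, so $X_g\approx X$ for \emph{every} $g\in G$, regardless of its isotopy class; there is nothing to detect this way. Relatedly, your worry about ``algebraic independence of the factors of $G$ being faithfully reflected in independent changes of the invariant'' is a red herring: no such bookkeeping is needed (or feasible for an uncountable $G$).

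The paper's trick is different and sidesteps this entirely. The $G$-action is $\Gamma$-compatible, meaning $G$ acts \emph{effectively} on the index set $\Sigma$ of summands. Given nontrivial $g$, pick $s\in\Sigma$ with $g(s)\ne s$. Embed $R$ in a closed $X$ so that the $s$-th summand is the genuine $R_S\subset X$ coming from the h-cobordism construction (carrying the Casson involution $\tau$ with $X_\tau\not\approx X$), while \emph{all other} summands sit in a standard $\R^4$ region of $X$. If $g$ were isotopic to the identity in $\D^\infty(R)$, it would be isotopic rel the compact core $K_S$ to a diffeomorphism $g'$ that is the identity near infinity; extending $g'$ by the identity over $X$ gives an ambient diffeomorphism carrying the $s$-th copy of $K_S$ to the $g(s)$-th. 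Hence the $\tau$-twist at summand $s$ (which yields $X_\tau$) is diffeomorphic to the $\tau$-twist at summand $g(s)$---but the latter lies in a trivial $S^4$ region, so that twist yields $X$. This contradicts $X_\tau\not\approx X$. Thus a \emph{single} fixed invariant distinction ($X$ versus $X_\tau$) handles every nontrivial $g$ uniformly; the permutation action, not any regluing by $g$, is what does the work.
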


\noindent See Theorem~\ref{diffeotopy}. Other candidates for $G$ include all countably generated free groups as well as all countable subgroups of $\SL(4,\R)$ and of the isometry groups of Euclidean and hyperbolic 3-space. In fact, all groups satisfying a general condition act on this same manifold $R$, injecting into $\D_+(R)$. Furthermore, this $R$ can be large or small, and chosen from among uncountably many diffeomorphism types in each case. The small ones can be assumed to admit Stein structures biholomorphically embedding in $\C^2$ respecting a finite  unitary action (cf.\ Remark~\ref{unitary}). The large ones can be chosen to embed in a Stein surface. Alternatively, the small ones can be chosen to admit group actions including orientation reversals, injecting into $\D(R)$, while the Stein and large ones admit no orientation-reversing self-diffeomorphisms.

We detect nontrivial elements of $\D(R)$ by their behavior at infinity. In Section~\ref{Infty}, we define the diffeotopy group $\D^\infty(R)$ of $R$ at infinity and its orientation-preserving subgroup $\D^\infty_+(R)$, in analogy with the diffeotopy group of the boundary of a compact manifold, using germs at infinity of proper embeddings. This is subtle even for the standard $\R^4$: Proposition~\ref{Schoen} states that $\D^\infty_+(\R^4)$ is trivial if and only if the 4-dimensional smooth Schoenflies Conjecture is true, and Theorem~\ref{DR4} identifies $\D^\infty_+(\R^4)$ with the group of Schoenflies balls. There is a canonical restriction homomorphism $r\co\D(R)\to\D^\infty(R)$. Casson's original construction of a small exotic $\R^4$ \cite{C} gives examples for which $D^\infty_+(R)$ contains a $\Z_2$-subgroup that intersects the image $r(D(R))$ trivially. Nothing else was previously known about $D^\infty_+(R)$ for any exotic $\R^4$. (Casson's involution is related to some peculiar $\Z_2\oplus\Z_2$-actions \cite{menag}, but these are not known to be isotopically nontrivial.) Theorem~\ref{diffeotopy} actually shows the following:

\begin{thm} Each of the $G$-actions of the previous paragraph injects into $\D^\infty(R)$. In particular, the image of $\D(R)$ in $\D^\infty(R)$ is uncountable.
\end{thm}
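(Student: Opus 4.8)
The plan is to show that the composite homomorphism $G\to\D(R)\xrightarrow{\,r\,}\D^\infty(R)$ is injective; the uncountability of the image of $\D(R)$ in $\D^\infty(R)$ then follows immediately, since $G$ is uncountable. The key observation is that the detection mechanism behind Theorem~\ref{diffeotopy}---the cork trick---operates entirely at infinity, so that nontriviality of the self-diffeomorphism $\phi_g$ induced by $g\in G$ is already witnessed by its germ $r(\phi_g)$ at infinity.

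First I would recall the construction of $R$ together with its $G$-action, noting that $\phi_g$ restricts to a diffeomorphism of the end of $R$ whose germ is $r(\phi_g)$. The cork trick embeds $R$ into a fixed ambient $4$-manifold $W$ carrying nontrivial Seiberg--Witten or Donaldson invariants, arranged so that the germ $r(\phi_g)$ governs a regluing of $W$ along the end of $R$. Writing $W_g$ for the resulting manifold, the element $g$ thereby realizes a cork twist, or a family of cork twists, on $W$; crucially, because the end of $R$ is exotic, these regluings are genuinely nontrivial rather than being absorbed by isotopies of a standard $3$-sphere.

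The heart of the argument is the contrapositive. Suppose $g\neq e$ but $r(\phi_g)$ is trivial in $\D^\infty(R)$, so that the germ of $\phi_g$ at infinity is isotopic, through germs of proper embeddings, to the identity. Such a trivialization would smoothly identify $W_g$ with $W$, meaning the cork twists encoded by $g$ leave the diffeomorphism type of $W$ unchanged. Comparing gauge-theoretic invariants then gives $W\not\cong W_g$, a contradiction. Hence $r(\phi_g)\neq 0$ for every nontrivial $g$; applying this to $g^{-1}g'$ shows that distinct elements of $G$ have distinct germs at infinity, establishing injectivity of $G\to\D^\infty(R)$.

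The principal obstacle is this gauge-theoretic step, and it is most delicate when $G$ is uncountable, since a single element may then encode infinitely many simultaneous cork twists, possibly of infinite support. One must choose $W$ and the embedding of the end of $R$ robustly enough that \emph{every} nontrivial cork twist---even an infinite one---alters the smooth invariants. Securing such a detection result uniformly over all of $G$ is the technical crux, and it is precisely here that the construction underlying Theorem~\ref{diffeotopy} must be brought to bear.
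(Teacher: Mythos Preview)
Your proposal has a fundamental conceptual gap. You define $W_g$ as the manifold obtained by cutting out $R$ and regluing via the germ $r(\phi_g)$, and then assert that gauge theory gives $W\not\cong W_g$. But this cannot work: by construction, $g$ acts on all of $R$ as a genuine diffeomorphism $\phi_g$, so the germ $r(\phi_g)$ \emph{extends over $R$}. Hence $W_g\cong W$ for \emph{every} embedding $R\subset W$ and every $g\in G$. Twisting by $g$ itself never changes the ambient diffeomorphism type, and there is no gauge-theoretic contradiction to be found along this route. The diffeomorphism $\phi_g$ merely permutes identical end-summands of $R$; it does not encode any cork twist.

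The paper's argument is quite different. It does not twist by $g$ at all. Instead, it fixes the involution $\tau$ (coming from the h-cobordism) on a single summand $R_S$ indexed by some $s$ with $g(s)\ne s$, and embeds $R\subset X$ so that only this $s$-summand is the ``real'' $R_S\subset X$ while all other summands collapse into standard balls. The hypothesis that $g$ is trivial in $\D^\infty(R)$ is then used to isotope $\phi_g$, rel the compact core $K_S$, to a diffeomorphism $g'$ that is the identity near infinity; this $g'$ extends by the identity to a self-diffeomorphism of $X$. The effect is to transport the $\tau$-twist from the $s$-summand (where it produces $X_\tau\not\cong X$) to the $g(s)$-summand (which lies in a ball, so the twist is trivial), yielding the contradiction $X_\tau\cong X$. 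The missing idea in your proposal is precisely this transport mechanism: the role of $g$ is not to perform a twist but to move an \emph{independent} effective twist to an ineffective location.
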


The kernel and cokernel of $r$ must be countable for any exotic $\R^4$ (Theorem~\ref{cok}). Nothing else seems to be known about the kernel. Is it ever nontrivial? Theorem~\ref{homeo} shows that the cokernels for our previous examples realize the maximal cardinality:

\begin{thm}\label{introEnd} There is a nonfinitely generated subgroup $G$ of $\D^\infty(R)$ that injects into $\D^\infty(R)/r(\D(R))$.
\end{thm}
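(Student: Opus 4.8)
The plan is to realize $G$ as a nonfinitely generated group of mutually commuting ``twists at infinity,'' each living in its own region of the end that runs out to infinity, and to detect non-extendability by an obstruction homomorphism that vanishes on extendable germs. The model for a single twist is Casson's $\Z_2$-example \cite{C}, whose involution germ represents a nontrivial class in $\D^\infty_+(R)$ meeting $r(\D(R))$ trivially; the whole point is to fit infinitely many independent copies of this phenomenon into one exotic $R$.

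First I would construct $R$ so that a neighborhood of infinity contains infinitely many pairwise disjoint tubes $N_1,N_2,\dots$, each diffeomorphic to $[0,\infty)\times B^3$ and each escaping to the single end of $R$, with an exotic Casson-handle/cork configuration inserted along $N_i$ supporting a twisting involution $\sigma_i$ whose support lies in $N_i$. Concretely one can take an infinite end sum of copies of Casson's manifold, arranged so that the $i$-th summand occupies the tube $N_i$ and contributes the germ $\sigma_i$. Because the support of $\sigma_i$ is noncompact, each $\sigma_i$ is a genuinely nontrivial element of $\D^\infty(R)$, and because the $N_i$ are disjoint the $\sigma_i$ commute and satisfy $\sigma_i^2=1$; thus they generate a copy of $G\cong\bigoplus_i\Z_2$, which is nonfinitely generated. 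Injectivity of $G\emb\D^\infty(R)$ then reduces to checking that a finite product $\sigma_{i_1}\cdots\sigma_{i_k}$ is nontrivial at infinity whenever some factor is, which follows from disjointness of supports together with the nontriviality of a single Casson germ.

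The heart of the matter, and the step I expect to be the main obstacle, is to show $G\cap r(\D(R))=\{1\}$, so that $G$ injects into the coset space $\D^\infty(R)/r(\D(R))$. I would produce an obstruction homomorphism $\Theta\co\D^\infty(R)\to\bigoplus_i\Z_2$ whose $i$-th coordinate records the change of smooth structure obtained by cutting $R$ along the sphere $\partial N_i$ and regluing by the given germ inside $N_i$. Two facts must be proved: that $\Theta$ is a well-defined homomorphism on isotopy classes of germs (regluing composes, and the reglued smooth structure is an isotopy invariant of the germ), and that $\Theta(\sigma_j)=e_j$, which is exactly the statement that the configuration inserted in $N_j$ is a genuine cork, so that $\Theta|_G$ is an isomorphism onto $\bigoplus_i\Z_2$. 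The crux is then $\Theta\circ r=0$: if a germ extends to $\hat g\in\Diff(R)$, cutting and regluing by it merely reproduces $R$ via $\hat g$ and so changes no smooth structure, forcing $\Theta(r(\hat g))=0$. This is precisely the cork trick already used to prove injectivity of the $G$-actions in Theorem~\ref{diffeotopy}, now run at infinity, and the delicate point is to make the cut-and-reglue comparison genuinely compatible with the germ at infinity rather than with a diffeomorphism that moves the spheres $\partial N_i$. Once $\Theta\circ r=0$ and $\Theta|_G$ is injective, any nontrivial $x\in G$ has $\Theta(x)\ne0$, hence $x\notin r(\D(R))$, giving $G\cap r(\D(R))=\{1\}$.

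Finally I would check consistency with Theorem~\ref{cok}: the group $G\cong\bigoplus_i\Z_2$ is countable, so it indeed injects into the necessarily countable cokernel, and being nonfinitely generated it shows that this cokernel attains the maximal allowed size. Taking all the inserted configurations to be copies of a single cork keeps the analysis clean, reducing everything to understanding one Casson-type germ together with the combinatorics of disjoint supports running to infinity.
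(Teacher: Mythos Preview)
Your construction of $R$ as an infinite end sum and your choice $G=\bigoplus_i\Z_2$ generated by the Casson involutions $\sigma_i$ on the summands is a genuinely different route from the paper's. The paper instead takes $G$ to be a group acting on $R$ by \emph{diffeomorphisms} that permute the summands, and then conjugates this action by the homeomorphism $h$ given by $\tau$ on one fixed summand; it is the conjugated copy of $G$ at infinity that injects into the cokernel. Your approach yields only an abelian $G$, while the paper's handles many nonabelian groups (free groups, countable subgroups of $\SL(4,\R)$, etc.), but either establishes the theorem as stated.

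However, your obstruction homomorphism $\Theta\co\D^\infty(R)\to\bigoplus_i\Z_2$ has a real gap. You define its $i$-th coordinate as ``the change of smooth structure obtained by cutting $R$ along $\partial N_i$ and regluing by the given germ inside $N_i$,'' but an arbitrary germ in $\D^\infty(R)$ has no reason to preserve $N_i$ near infinity, so ``regluing inside $N_i$'' is undefined for most of the domain. Even on germs that do preserve each $N_i$, there is no reason the value is $\Z_2$-valued: the reglued manifold need not be diffeomorphic to either of two fixed models. The claimed identity $\Theta\circ r=0$ likewise does not follow from the cork trick as you describe it, since that trick detects a change in an ambient \emph{closed} manifold $X$, not an intrinsic invariant of $R$. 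Separately, your argument that disjointly supported nontrivial germs have nontrivial product in $\D^\infty(R)$ is not justified: an isotopy to the identity need not respect the tube decomposition.

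Both gaps are repaired by abandoning $\Theta$ and arguing directly, as in the paper's proof of Theorem~\ref{diffeotopy}. For a nontrivial $\sigma=\sigma_{i_1}\cdots\sigma_{i_k}$, embed $R\subset X$ so that the $i_1$-summand is the distinguished $R_S\subset X$ and all other summands land in a standard $\R^4$. Twisting at summands $i_2,\dots,i_k$ occurs inside a ball and leaves $X$ unchanged (since $S^4_\tau\approx S^4$), so $X_\sigma\approx X_\tau\not\approx X$. This single computation shows both $\sigma\ne 1$ in $\D^\infty(R)$ and $\sigma\notin r(\D(R))$ (since $X_f\approx X$ for any $f\in\D(R)$). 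No global homomorphism is needed; one uses a \emph{family} of embeddings, one tailored to each nontrivial $\sigma$.
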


\noindent In fact, any of our previous groups satisfying one additional condition can be realized as in Theorem~\ref{introEnd} by a subgroup of $\D^\infty(R)$ conjugate to one in the image of $\D(R)$. (In particular, this image is not a normal subgroup, so the cokernel is only a set.) In counterpoint, we find an exotic $\R^4$ whose cokernel has minimal cardinality (Theorem~\ref{univ}):

\begin{thm} Let $R_U$ be the Freedman-Taylor universal $\R^4$ \cite{FT}. Then $\D^\infty(R_U)/r(\D(R_U))$ is trivial.
\end{thm}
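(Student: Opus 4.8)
The plan is to prove the equivalent statement that the restriction homomorphism $r\co\D(R_U)\to\D^\infty(R_U)$ is \emph{surjective}; its image is then all of $\D^\infty(R_U)$, so the quotient is the trivial group (and the coset-space subtlety noted after Theorem~\ref{introEnd} disappears, the image being normal). Thus I must show that every germ at infinity of a proper self-embedding of $R_U$ extends to a genuine self-diffeomorphism. I would first reduce to orientation-preserving germs. Since $R_U$ is large it has no orientation-reversing self-diffeomorphism, so $r(\D(R_U))\subseteq\D^\infty_+(R_U)$; for the quotient to vanish one must further know $R_U$ admits no orientation-reversing germ at infinity, which I would deduce from the chirality of the universal end (such a germ would give an orientation-reversing self-diffeomorphism of the end of $R_U$, incompatible with the asymmetry of its end invariant that forces largeness). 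This gives $\D^\infty(R_U)=\D^\infty_+(R_U)$, and only orientation-preserving germs remain to be realized.

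Represent a given class by a diffeomorphism $\phi\co N\to N'$ between neighborhoods of infinity. Choosing a deep neighborhood of infinity $N_0\subset N$ and passing to germs, extending $[\phi]$ becomes a \emph{relative filling problem}: writing $C_0=R_U\setminus\inter N_0$ and $C_0'=R_U\setminus\inter\phi(N_0)$ for the compact complementary pieces, I must find $N_0$ for which the boundary diffeomorphism $\phi|_{\partial C_0}\co\partial C_0\to\partial C_0'$ extends to a diffeomorphism $C_0\to C_0'$; gluing this to $\phi$ produces a global $\Phi\in\Diff(R_U)$ with $r(\Phi)=[\phi]$, since $\Phi$ is altered only over the compact set $C_0$. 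Each $C_i$ is a compact contractible $4$-manifold with homology-sphere boundary, and it is important to note that, $R_U$ being large, one cannot in general arrange $\partial C_0\cong S^3$, so these pieces are genuinely nontrivial and the filling problem has real content.

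The engine for solving the filling problem is the defining property of the Freedman--Taylor manifold \cite{FT}: $R_U$ \emph{absorbs}, i.e.\ $R_U\,\natural\,\Sigma\cong R_U$ for every smoothing $\Sigma$ of $\R^4$, and in particular $R_U$ carries a self-similar end, $R_U\cong R_U\,\natural\,R_U\,\natural\cdots$, exhibiting the end as an infinite end-sum of universal rooms $W_1,W_2,\dots$ running to infinity, with a shift diffeomorphism $\sigma\co R_U\to R_U$ satisfying $\sigma(W_i)=W_{i+1}$ near infinity. I would then run a Mazur--Eilenberg infinite swindle: conjugating $\phi$ by powers of $\sigma$ concentrates its ``defect'' (the discrepancy between $C_0$ and $C_0'$) into finitely many rooms, and this defect is cancelled against the infinite tail by a locally finite infinite composition of diffeomorphisms supported in the successive rooms. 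The formal cancellation $C_0\cong C_0\,\natural\,(\mathrm{tail})\cong(\mathrm{defect})\,\natural\,(\mathrm{tail})\cong(\mathrm{tail})\cong C_0'$ is to be realized as an explicit smooth proper map, producing the diffeomorphism $C_0\to C_0'$ rel boundary, and hence $\Phi$.

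The main obstacle is making this swindle honest in the \emph{smooth} category. One is composing infinitely many diffeomorphisms, and the crux is to arrange the composition to be locally finite near infinity, so that it converges to a smooth, proper diffeomorphism rather than merely a homeomorphism, while still agreeing with $\phi$ on a neighborhood of infinity so that the germ is unchanged. A second delicate point is to verify that the absorbing property of $R_U$ genuinely applies to the complementary pieces $C_i$ occurring here---whose boundaries need not be simply connected, so that their interiors need not themselves be smoothings of $\R^4$---and not merely to smoothings of $\R^4$; this forces one to choose the rooms and the end-sum splitting compatibly with $\phi$ and to track $\pi_1(\partial C_0)$ through the cancellation. Theorem~\ref{cok} already guarantees that the cokernel is at worst countable, but all the real content lies in this absorption argument, which upgrades ``countable'' to ``trivial.''
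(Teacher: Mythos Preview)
Your proposal contains a factual error and a genuine structural gap, and the overall route is quite different from the paper's.

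First, the error: you assert that $R_U$ is large and hence admits no orientation-reversing self-diffeomorphism. Largeness does not imply this in general, and for $R_U$ it is simply false: since $R_U\natural\overline{R_U}\approx R_U$ by universality, there is an obvious orientation-reversing involution of $R_U$ (the paper notes this explicitly after stating the problem of describing $\D(R_U)$). So your reduction to $\D^\infty_+$ is both unnecessary and incorrectly argued. This is not fatal to the strategy, but it should be dropped.

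The real gap is in the swindle. You reduce to finding a diffeomorphism $C_0\to C_0'$ of compact contractible pieces extending $\phi|_{\partial C_0}$, and then propose to manufacture it by an Eilenberg--Mazur telescoping using the end-sum self-similarity of $R_U$. But the swindle naturally produces diffeomorphisms of \emph{noncompact} manifolds (by absorbing a defect into an infinite tail), not diffeomorphisms of compact manifolds rel boundary. Your displayed chain $C_0\cong C_0\natural(\mathrm{tail})\cong\cdots\cong C_0'$ mixes compact and noncompact objects and never explains what ``$\natural$'' with a compact $C_0$ means or why the end result is a \emph{compact} diffeomorphism rel $\partial C_0$. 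You acknowledge both obstacles (smooth local finiteness, and that the absorbing property is only stated for $\R^4$-homeomorphs) but do not resolve either. As written, this is a hope rather than an argument.

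The paper proceeds quite differently and avoids the filling problem entirely. It passes to dimension~5: extend the end-diffeomorphism to a homeomorphism $f$ of $R_U$ (Proposition~\ref{Schoen}), use $f$ to build a smooth h-cobordism $W$ from $R_U$ to itself that is already a smooth product near infinity, and then trivialize $W$ smoothly. The key input is not the absorbing property $R\natural R_U\approx R_U$ per se, but the \emph{defining} feature of $R_U$ from \cite{FT}: it contains all the smooth link-slice solutions needed to cancel the 2-/3-handle pairs arising in the h-cobordism. Because $R_U$ is an infinite end sum of copies of itself, one finds a summand disjoint from the compact region where the handles live, and the required slice disks sit there. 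This yields a product structure on $W$ agreeing with the given one near infinity, and projecting to the bottom gives the desired global diffeomorphism. The moral is that universality of $R_U$ is being used at the level of Freedman--Taylor's controlled h-cobordism machinery, not as a purely formal absorption statement.
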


Our techniques are a gateway to analyzing a much larger class of open 4-manifolds. While we do not generalize systematically, we hint at the possibilities (see Theorem~\ref{open}), such as:

\begin{thm} Let $Y$ be a compact topological 4-manifold with connected boundary. Suppose $Y$ homeomorphically embeds in $\# n\blow$. Then there are uncountably many smoothings of $V=\inter Y$ for which the image of $\D(V)$ in $\D^\infty(V)$ is uncountable and trivially intersects some nonfinitely generated subgroup.
\end{thm}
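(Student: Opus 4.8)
The plan is to reduce this general statement to the machinery already developed for exotic $\R^4$'s. The key conceptual point is that the hypothesis ``$Y$ homeomorphically embeds in $\#n\blow$'' should let me produce, inside $V=\inter Y$, a properly embedded exotic $\R^4$ carrying all the diffeotopy phenomena from Theorems~\ref{diffeotopy} and~\ref{homeo}, and then transport those self-diffeomorphisms out to $V$ by extending via the identity on the complementary region. First I would use Freedman's theory: since $Y$ embeds topologically in the closed simply-connected 4-manifold $\#n\blow$, the open manifold $V$ admits a smoothing compatible with a handle decomposition, and more importantly the embedding lets me locate a topologically standard $\R^4$ (a neighborhood of an end, or a collar pushed inward) whose smooth structure can be exotic. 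The role of embedding in $\#n\blow$ is to guarantee, via the standard cork/exotic-$\R^4$ constructions, that the relevant Casson handles or end-periodic pieces produce genuinely exotic smoothings rather than the standard one—this is where uncountability of the smoothings comes from.

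The central construction would be as follows. Take the exotic $\R^4$, call it $R$, supplied by Theorem~\ref{diffeotopy} together with its $G$-action injecting into $\D_+(R)$ and into $\D^\infty(R)$, and its nonfinitely-generated subgroup from Theorem~\ref{homeo} that injects into the cokernel $\D^\infty(R)/r(\D(R))$. I would arrange a proper smooth embedding $R\emb V$ as a tubular neighborhood of a chosen end of $V$, so that $R$ and $V$ share that end and the complement $V\setminus R$ is a collar-like region. Each self-diffeomorphism of $R$ built in Theorem~\ref{diffeotopy} is supported near infinity (it is detected at infinity), so after an isotopy it can be taken to agree with the identity outside a compact set of $R$; such a map extends by the identity over $V\setminus R$ to give a self-diffeomorphism of $V$. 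This yields a homomorphism $G\to\D(V)$, and composing with the restriction $r\co\D(V)\to\D^\infty(V)$ and using that $V$ and $R$ have the same end gives a map $G\to\D^\infty(V)$.

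The heart of the argument is injectivity, and here I would reuse the cork-theoretic detection trick that proves injectivity in Theorem~\ref{diffeotopy}: nontriviality at infinity is certified by a Seiberg-Witten / gauge-theoretic invariant of the capped-off manifold, which is insensitive to attaching the fixed collar $V\setminus R$ because that operation does not change the end or the relevant invariant. Concretely, since $V$ and $R$ have diffeomorphic ends, the group $\D^\infty(V)$ receives the same germ-at-infinity data as $\D^\infty(R)$, so the composite $G\to\D^\infty(V)$ factors through the injective map $G\to\D^\infty(R)$ and is therefore injective; this gives the uncountable image of $\D(V)$ in $\D^\infty(V)$. For the nonfinitely-generated subgroup that trivially intersects the image, I would transport the subgroup of Theorem~\ref{homeo}: its elements are germs at infinity that do not extend over $R$, and since extending over $V$ would in particular extend over $R$ (the complement being a trivial collar), they do not extend over $V$ either, so they meet $r(\D(V))$ trivially.

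The main obstacle I expect is the interface between $R$ and its complement in $V$: I must ensure that the embedding $R\emb V$ can be chosen so that (i) the diffeomorphisms of $R$, after isotopy, really are compactly supported and hence extend, and (ii) the end of $V$ is genuinely identified with the end of $R$ so that $\D^\infty(V)$ sees the same germs. This requires checking that $V\setminus R$ is a product collar (or at least that attaching it does not disturb the gauge-theoretic nontriviality), and that the uncountably many smoothings of $V$ are produced compatibly with this embedded $R$—that is, varying the smoothing of the embedded exotic $\R^4$ varies the smoothing of $V$ through uncountably many diffeomorphism types. Establishing this last point rigorously, rather than merely plausibly, is where the topological embedding hypothesis into $\#n\blow$ must be invoked carefully, presumably through the same end-summing and Casson-handle arguments underlying the earlier theorems.
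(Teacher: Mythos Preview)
Your approach has a genuine topological gap. You propose to embed the exotic $\R^4$ as ``a tubular neighborhood of a chosen end of $V$, so that $R$ and $V$ share that end,'' and later assert that ``$V$ and $R$ have diffeomorphic ends.'' But the end of $V=\inter Y$ is topologically $\partial Y\times[0,\infty)$, whereas the end of any $\R^4$-homeomorph is $S^3\times[0,\infty)$; these agree only when $\partial Y$ is a $3$-sphere. For general $Y$ there is no proper embedding of $R$ into $V$ making the ends coincide, and the complement $V\setminus R$ cannot be the ``collar-like region'' you describe. This undermines both your extension step and your identification $\D^\infty(V)\cong\D^\infty(R)$.

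There is also a confusion in your extension argument: you write that each self-diffeomorphism of $R$ ``can be taken to agree with the identity outside a compact set of $R$,'' but a diffeomorphism that is trivial outside a compact set has trivial germ at infinity and hence maps to the identity in $\D^\infty(R)$---exactly the opposite of what you need.

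The paper avoids both issues by \emph{end summing} rather than embedding at the end. One first gives $\inter Y$ the smoothing inherited from its topological embedding in $\#n\blow$ (after arranging $\partial Y$ to be bicollared and smooth near a point, via Quinn), and then end sums $\inter Y$ with the exotic $R$ along a ray. Since $R$ is homeomorphic to $\R^4$, the result is still homeomorphic to $\inter Y$, so this produces a genuine smoothing of $V$. Crucially, one takes $G\in\G^*$, so the $G$-action on $R$ fixes a neighborhood of the ray used for the sum; the action then extends by the identity over the $\inter Y$ side. The injectivity and cokernel arguments of Theorems~\ref{diffeotopy} and~\ref{homeo} apply verbatim because the end-summed manifold still embeds (after finitely many blowups) in the closed manifold $X$ where the Seiberg--Witten distinction $X\not\approx X_\tau$ is detected; the hypothesis that $Y$ embeds in $\#n\blow$ is precisely what guarantees this ambient embedding exists with $Y$ disjoint from $R_S$. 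Uncountably many smoothings then come from Lemma~\ref{radial}.
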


Our results on infinite diffeotopy groups contrast sharply with Taylor's work on isometry groups \cite{Ta2}, which we summarize in Section~\ref{Review} (along with other background on exotic smoothings of $\R^4$). Slightly generalizing a condition of Taylor, we call an exotic $\R^4$ {\em full} if it has a compact subset that cannot be smoothly embedded into its own complement, or into any homology 4-sphere. Taylor essentially shows that for every metric on a full exotic $\R^4$, the isometry group is finite. In contrast, we have:

\begin{thm} (a) Each exotic $\R^4$ in Theorem~\ref{diffeotopy} admits a complete metric whose isometry group is infinite, containing a subgroup that is not finitely generated.
\item[(b)] There is a full exotic $\R^4$ whose diffeotopy groups $\D(R)$ and $\D^\infty(R)$ are uncountable.
\end{thm}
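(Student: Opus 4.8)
The plan is to prove (a) and (b) by separate arguments, both exploiting the flexibility in the construction behind Theorem~\ref{diffeotopy}. For (a) the goal is to realize an infinite, non-finitely-generated group \emph{isometrically}; for (b) it is to run the diffeotopy construction on a building block carrying a gauge-theoretic fullness obstruction, so that Taylor's finiteness theorem \cite{Ta2} applies to every metric while the diffeotopy group stays uncountable.

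For (a), I would first record a hard ceiling. By the Myers--Steenrod theorem the isometry group of a complete Riemannian $4$-manifold is a Lie group acting properly, hence contains no uncountable discrete subgroup and no discrete subgroup of infinite virtual cohomological dimension. This rules out realizing $\prod\Z$, or even $\bigoplus\Z$ (which has infinite cohomological dimension), as isometries, and explains why the statement asks only for a \emph{non-finitely-generated} subgroup rather than an uncountable one. The remedy is that the \emph{same} manifold $R$ carries actions of many groups, so I would select a countably infinitely generated free group $F_\infty$ (among the allowed groups, with cohomological dimension $1$), and arrange its action on $R$ to be free and properly discontinuous. Then $R\to R/F_\infty$ is a covering of an open $4$-manifold; pulling back any complete metric on $R/F_\infty$ gives a complete $F_\infty$-invariant metric $g$, and since a free action is faithful this yields an injection $F_\infty\hookrightarrow\mathrm{Isom}(R,g)$. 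Thus $\mathrm{Isom}(R,g)$ is infinite and contains the non-finitely-generated group $F_\infty$. The main obstacle here is verifying that the chosen action is genuinely free and properly discontinuous, since it is an infinite equivariant modification concentrated at the end of $R$; I would control the supports of the pieces indexed by the free generators so that only finitely many group elements carry any fixed compact set into itself. Once properness is established the metric step is routine, as completeness passes to coverings.

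For (b), I would re-run the construction of Theorem~\ref{diffeotopy} with the uncountable group $G=\prod\Z$, but built on a \emph{large} building block---one carrying a compact set $K$ with a gauge-theoretic obstruction (coming from its embedding in a Stein surface but not in $S^4$) to smooth embedding into any homology $4$-sphere. Because the group action is supported at the end of $R$ while $K$ lies in a compact core, the argument of Theorem~\ref{diffeotopy} still injects $G$ into $\D(R)$ and $\D^\infty(R)$, making both uncountable. It then remains to show $R$ is full: the property ``$K$ does not embed in any homology $4$-sphere'' is intrinsic to $K$ and survives verbatim, so the work is to exclude a smooth embedding of $K$ into its complement $R\setminus K$, which I would do by exhibiting an embedding of $R\setminus K$ into a homology sphere (or Stein surface) that $K$ obstructs. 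Granting fullness, Taylor \cite{Ta2} forces every metric on $R$ to have finite isometry group, so (b) displays uncountable diffeotopy coexisting with uniformly finite isometry groups---the sharpest form of the contrast. I expect this fullness verification to be the hard part: one must ensure that the infinite end construction neither destroys the compact obstruction nor opens room for $K$ to embed in the complement, which is a gauge-theoretic bookkeeping on the smooth structures of the core and the end.
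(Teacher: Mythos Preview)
Your approach to (a) is essentially correct and close in spirit to the paper's. The paper phrases it as equivariant metric extension rather than quotient-and-pullback: one starts with a complete $G$-invariant metric on the central $\R^4$ (for $F_\infty$ this is exactly the pulled-back metric from the quotient end sum of copies of $S^1\times\R^3$), then equivariantly merges it with a fixed complete metric on each exotic summand inside disjoint tubular neighborhoods of the rays. For a free, properly discontinuous action the two descriptions are equivalent. Your ``hard ceiling'' paragraph is extraneous motivation and the claim about $\bigoplus\Z$ is not obviously relevant, but neither affects the argument.

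Part (b), however, has a genuine gap. The construction of Theorem~\ref{diffeotopy} end-sums the central $\R^4$ with \emph{infinitely} many copies of the model summand, one for each element of the infinite set $\Sigma$. If you take the model to be a large building block containing the obstruction set $K$, then the resulting $R$ contains infinitely many disjoint copies of $K$. In particular, for \emph{any} compact $K'\subset R$, the complement $R- K'$ still contains all but finitely many summands, hence contains a copy of $K'$. So $R$ can never be full---this is precisely Theorem~\ref{full}(c). Your proposed verification, embedding $R- K$ into a homology sphere that $K$ obstructs, is impossible for the same reason: $R- K$ already contains copies of $K$. (Equivalently, the infinite sum has infinite Taylor invariant, so Theorem~\ref{full}(b) does not apply.)

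The paper's remedy is to decouple the group action from the largeness. One takes $G\in\G^*$, meaning the $G$-action on $\R^4$ fixes a neighborhood of an extra ray $\gamma$ disjoint from $\Gamma$. Running Theorem~\ref{diffeotopy} with \emph{small} summands (copies of $R_S$) produces a small exotic $\R^4$ carrying the uncountable $G$-action; then one end-sums along the fixed ray $\gamma$ with a \emph{single} copy of $R_L$, extending the $G$-action by the identity there. The result contains $K_L$ but has finite Taylor invariant, so is full by Theorem~\ref{full}(b), and the injectivity argument for $G\to\D^\infty(R)$ goes through after embedding the lone $R_L$ into a single $\blow$ summand.
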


\noindent See Theorems~\ref{isom} and~\ref{fulldiff}. We exhibit uncountably many large and small diffeomorphism types satisfying (b). We present many infinite isometric group actions for the examples of Theorem~\ref{diffeotopy}. All of these inject into the diffeotopy groups $\D(R)$ and $\D^\infty(R)$, in contrast with the many isometric group actions on $\R^4$. This raises another question:

\begin{ques} Does every isometry group of an exotic $\R^4$ inject into its diffeotopy group?
\end{ques}

To present our remaining results, we need some background on smooth manifolds homeomorphic to $\R^4$. As in \cite{DF}, we refer to these as {\em $\R^4$-homeomorphs}, to avoid exclusion of $\R^4$ (and awkward pluralizations). There are two known approaches to constructing exotic $\R^4$-homeomorphs, both originating with work of Casson \cite{C} in the 1970s. The approach most immediately relevant involves the h-Cobordism Theorem. Suppose that $X$ is a smooth, closed, simply connected 4-manifold and that $W$ is an h-cobordism of $X$ (essentially a homotopy product $I\times X$ with bottom boundary identified with $X$), but that $W$ is not diffeomorphic to $I\times X$. (Such nontrivial h-cobordisms were subsequently shown to exist by Donaldson \cite{Dpoly} in the 1980s.) Casson showed that the nontriviality of $W$ could be localized over a contractible open subset $R\subset X$, which, in light of Freedman's seminal 1981 paper \cite{F}, must be homeomorphic to $\R^4$. Nontriviality of the h-cobordism could then be used to show that $R$ is an exotic $\R^4$ (cf.\ Proposition~\ref{exotic}). In fact, for suitable $W$, this $R$ can be arbitrarily chosen from an uncountable set of diffeomorphism types (DeMichelis and Freedman~\cite{DF}, following Taubes~\cite{Tb}). Casson also showed that the other boundary component of $W$, which is homeomorphic \cite{F} but typically not diffeomorphic to $X$, is obtained from $X$ by cutting out $R$ and regluing it by an involution of the end of $R$. This involution then cannot extend smoothly over $R$, so yields the $\Z_2\subset\D_+^\infty(R)$ mentioned above.

In the 1990s, an analogous phenomenon was discovered, with $R$ replaced by a compact, contractible manifold (with boundary) that is now called a {\em cork}. The first example was discovered by Akbulut \cite{A} using Kirby calculus. Then it was shown in general (Curtis, Freedman, Hsiang, Stong \cite{CFHS} and Matveyev \cite{M}) that for any h-cobordism $W$ as above, the two boundary components were related by a {\em cork twist}, cutting out a cork and regluing it by an involution of the boundary homology sphere. (The general proof is in the spirit of Casson's argument, localizing the nontriviality over the cork.) The question was immediately raised of whether more general actions on the boundary could take the place of the $\Z_2$-action given by the involution. No progress was made until the 2016 papers \cite{T}, \cite{AKMR} and \cite{InfCork}. Most notably, Auckly, Kim, Melvin and Ruberman \cite{AKMR} found {\em $G$-corks} for any finite subgroup $G$ of $\SO(4)$, contractible submanifolds with boundary $G$-actions for which cutting and regluing by distinct elements of $G$ yields distinct diffeomorphism types. A different approach \cite{InfCork} yields $\Z$-corks. The tricks used in these papers can also be applied in the exotic $\R^4$ context. However, replacing compact boundaries by noncompact ends amplifies one trick until it detects uncountable diffeotopy and ultimately gives most of the above results. In the context of cutting and regluing closed 4-manifolds $X$, we prove:

\begin{thm} Up to blowups, $G$-slice corks and $G$-slice $\R^4$-homeomorphs can be used interchangeably.
\end{thm}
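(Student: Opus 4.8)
The plan is to build an explicit dictionary between the compact (cork) and open (exotic $\R^4$) pictures and to verify that it intertwines the two regluing operations once both sides are stabilized by copies of $\blow$. The soft direction passes from an open model to a compact one: starting with a $G$-slice $\R^4$-homeomorph $R$ carrying a $G$-action at infinity, I would truncate $R$ along a smooth $3$-sphere pushed far out into the end, producing a compact contractible manifold $C$ with $\partial C=S^3$ on which the germ at infinity restricts to a boundary $G$-action. This $C$ is the candidate $G$-slice cork, and cutting $R$ out of an ambient closed manifold and regluing by $g$ agrees, outside the truncating sphere, with the cork twist of $C$ by $g$. The point to check here is only that the truncation can be chosen $G$-equivariantly, which follows from the germ structure of the action at infinity.

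The substantive direction reconstructs an open model from a $G$-slice cork $(C,G)$. I would embed $C$ in a closed simply connected smooth $4$-manifold $X$ so that a cork twist by $g$ changes the diffeomorphism type, and then invoke the h-cobordism produced by the twist together with Casson's localization (cf.\ Proposition~\ref{exotic}) to concentrate the exotic phenomenon in a contractible open subset $R\subset X$ homeomorphic to $\R^4$. The boundary action on $\partial C$ then propagates to a germ of a $G$-action at infinity on $R$, realizing $R$ as a $G$-slice $\R^4$-homeomorph whose twists recover the cork twists of $X$. That the regluing maps cannot extend smoothly over $R$ is certified exactly as in the proof that $G$ injects into $\D^\infty(R)$ (Theorem~\ref{diffeotopy}), so the $G$-slice property transports in this direction.

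The blowups enter to make the two constructions agree at the level of smooth structures rather than merely homeomorphism types. The localization of the h-cobordism as a Casson handle, and the identification of the truncated end $S^3\times\R$ with the exterior of $C$, are only guaranteed after connect-summing with enough copies of $\blow$: stabilizing straightens the relevant Casson handles and renders the comparison h-cobordism a smooth product, so that the cork-twisted manifold and the $\R^4$-twisted manifold become diffeomorphic after a common stabilization $X\mapsto X\#n\blow$. This is precisely why the equivalence is asserted only up to blowups, and it is what lets the two twisting operations be treated interchangeably.

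I expect the main obstacle to be verifying that stabilization does not wash out the very distinctions being transported, i.e.\ that regluing by distinct $g$ still yields distinct smooth structures after blowing up, so that ``$G$-slice'' is genuinely preserved in both directions. This forces the use of a blowup-stable smooth invariant: I would detect the surviving distinctions by a Donaldson- or Seiberg--Witten-type argument that remains sensitive under connect sum with $\blow$, combined with the end-based obstruction of the earlier sections to rule out smooth extension of the regluing germs. The delicate bookkeeping is to match the homology-sphere boundary $\partial C$ with the $S^3$-end of $R$ consistently across the stabilization while keeping the $G$-action equivariant throughout; controlling this compatibility is exactly what makes the ``up to blowups'' hypothesis unavoidable.
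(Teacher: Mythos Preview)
Your proposal has a fatal gap in the ``soft'' direction and misses the structural mechanism that actually drives the paper's argument.

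First, you cannot truncate a $G$-slice $\R^4$-homeomorph $R$ along a smooth $3$-sphere pushed out into the end. The relevant $R$'s are exotic precisely because no smooth $S^3$ surrounds a suitable compact core (this is the content of Proposition~\ref{exotic}), so your candidate cork $C$ with $\partial C\approx S^3$ does not exist in general. Even were such a sphere available, a cork boundary is typically a nontrivial homology sphere obtained by $-1/m_i$-surgery on the slice link $L$, not $S^3$, so your picture is also off at the level of boundaries. The paper does not truncate at all.

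Second, both constructions in the paper share a common compact core that your sketch ignores: the slice disk exterior $E(D)$. The cork $C(D,m)$ is $E(D)$ with genuine $2$-handles on meridians; the $\R^4$-homeomorph $R(D)$ is $E(D)$ with Casson handles on the same meridians; and the $G$-action lives on $\partial E'(D)$ in both cases. The interchangeability is then proved by producing equivariant embeddings, rel $E'(D)$, of one model in a blowup of the other. From cork to $R(D)$ one finds $0$-framed immersed disks on the meridians inside $C(D,m)-\inter E'(D)$ (a single blowup corrects parity of framings) and applies Casson's embedding theorem. From $R(D)$ to cork one uses the generically immersed spanning disk inside each Casson handle and blows up its double points to obtain honest $2$-handles with sufficiently negative framings. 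The blowups are thus not ``straightening h-cobordisms'' but are doing concrete local work: fixing framing parity in one direction and resolving double points in the other. Addendum~\ref{main} then observes that since all these embeddings agree on $E'(D)$, twisting by $g$ yields the same $X^*_g$ regardless of which model is used, so stable $G$-effectiveness transfers for free. Your h-cobordism/localization story, and your account of why stabilization is needed, do not match this mechanism.
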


\noindent See Section~\ref{Corks}. In practice, the hypotheses are satisfied by the explicit corks arising in the literature, as well as the $\R^4$-homeomorphs arising from the h-Cobordism Theorem. Much of cork theory then adapts immediately to the exotic $\R^4$ setting (Section~\ref{Apps}).

This paper is structured as follows: The terminology and notation in Section~\ref{Infty} and the background in Section~\ref{Review} are used throughout the paper; Section~\ref{Infty} also provides context for subsequent results. Sections~\ref{Groups} (actions generating uncountable diffeotopy), \ref{Univ} (the universal $\R^4$) and \ref{Corks} (the relation with corks) are independent of each other, and Section~\ref{Apps} consists of applications of Section~\ref{Corks}. All manifolds and maps are assumed to be smooth unless otherwise indicated. Embeddings are 1-1 immersions, not necessarily proper. All manifolds are implicitly oriented, with $\overline X$ denoting the manifold obtained from $X$ by reversing its orientation. All homeomorphisms and  codimension-zero embeddings preserve orientations unless otherwise stated, the main exception being that some group actions in Section~\ref{Groups} include orientation-reversing diffeomorphisms. All groups are given the discrete topology, but act by diffeomorphisms.


\section{Group actions at infinity}\label{Infty}

We begin by defining diffeomorphisms at infinity, to serve as noncompact analogs of boundary diffeomorphisms. We will use these to define group actions and diffeotopy groups at infinity, and for the noncompact analog of cutting and pasting along boundaries. Along the way, we provide context for the rest of the paper by analyzing $\D_+^\infty(\R^4)$, and proving countability of the kernel and cokernel of the restriction homomorphism $r$ for any $\R^4$-homeomorph.

We define diffeomorphisms at infinity to be germs at infinity of diffeomorphisms. To be more precise, let $V$ be an open manifold. A {\em closed neighborhood of infinity} in $V$ is a codimension-0 submanifold $Y\subset V$ that is a closed subset whose complement has compact closure. Given manifolds $V$ and $V'$, suppose $f_i\co Y_i\to Y'_i$, $i=1,2$, are diffeomorphisms (possibly reversing orientation) between closed neighborhoods of infinity $Y_i\subset V$ and $Y'_i\subset V'$. We consider $f_1$ and $f_2$ to be equivalent if they agree outside of some compact subset of $V$ containing the complements of $Y_1$ and $Y_2$.

\begin{de} A {\em diffeomorphism at infinity} from $V$ to $V'$ is an equivalence class of such diffeomorphisms. When $V$ has a single end, we will also call this a {\em diffeomorphism of the ends} of $V$ and $V'$. A diffeomorphism at infinity {\em extends over $V$} if the equivalence class contains a diffeomorphism $V\to V'$.
\end{de}

 Diffeomorphisms at infinity compose in the obvious way, making the set of self-diffeomorphisms of $V$ at infinity into a group. We define a {\em $G$-action at infinity} to be a homomorphism of a group $G$ into this group. We can similarly define homeomorphisms and topological group actions at infinity. Two diffeomorphisms at infinity will be called {\em isotopic} if they have representatives that are properly isotopic (as embeddings into $V'$). The group $\D^\infty(V)$ of isotopy classes of self-diffeomorphisms of $V$ at infinity, with orientation-preserving subgroup $\D_+^\infty(V)$, will be called the {\em diffeotopy group of $V$ at infinity}. We obtain a homomorphism $r\co\D(V)\to\D^\infty(V)$ by sending diffeomorphisms to their corresponding equivalence classes at infinity.

Before studying extendability of diffeomorphisms at infinity over an exotic $\R^4$, we examine the case of the standard $\R^4$. This reduces to a famous open conjecture.

\begin{prop}\label{Schoen}
Every self-homeomorphism at infinity extends over $\R^4$. In the smooth category, the following are equivalent:

\item[(a)] Every self-diffeomorphism at infinity extends over $\R^4$.

\item[(b)] The smooth 4-dimensional Schoenflies Conjecture: Every embedding $S^3\to S^4$ extends to an embedding of the 4-ball $B^4$.

\item[(c)] A smoothing of $B^4$ must be diffeomorphic to the standard one if its interior is diffeomorphic to $\R^4$.
\end{prop}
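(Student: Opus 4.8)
The plan is to prove the topological extension statement first, then establish the three-way equivalence (a)$\iff$(b)$\iff$(c) in the smooth category. For the topological claim, I would invoke the topological analog of the Schoenflies theorem: in dimension 4, every locally flat embedding $S^3\emb S^4$ bounds a topological ball (this is a consequence of Freedman's work, the topological Schoenflies theorem being known in all dimensions in the locally flat category). Given a self-homeomorphism $f$ at infinity of $\R^4$, I would compactify to view its germ near $\infty$ as a homeomorphism of a neighborhood of $\infty$ in $S^4=\R^4\cup\{\infty\}$ fixing $\infty$. The image of a small sphere around $\infty$ is a locally flat $S^3$, which by topological Schoenflies bounds a ball on the inside; coning off $f$ across this ball (using that $\D_+(\R^4)$, or rather the mapping class group of the topological ball, is trivial via the Alexander trick) produces the extension over all of $\R^4$.

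**For the smooth equivalences**, I would argue (a)$\Rightarrow$(b)$\Rightarrow$(c)$\Rightarrow$(a) in a cycle. For (a)$\Rightarrow$(b): given a smooth embedding $g\co S^3\emb S^4$, its image separates $S^4$ into two pieces; removing an interior point of the non-distinguished piece identifies that side's complement with a smoothing structure on $\R^4$, and $g$ induces a self-diffeomorphism of a neighborhood of infinity which, by (a), extends over $\R^4$; the extension caps off the embedded $S^3$ by a smooth $B^4$. For (b)$\Rightarrow$(c): if $B$ is a smoothing of $B^4$ with $\inter B\cong\R^4$ (standard), then $\partial B=S^3$ sits inside the double $B\cup_{\partial}\overline{B}\cong S^4$ as a smoothly embedded $3$-sphere; the Schoenflies Conjecture (b) says it bounds a smooth $B^4$, and uniqueness of smooth structures on $B^4$ relative to the standard interior forces $B$ to be standard. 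For (c)$\Rightarrow$(a): given a self-diffeomorphism $f$ at infinity of $\R^4$, use it to glue a neighborhood of infinity to a standard collar, building a smoothing $B$ of $B^4$ whose interior is diffeomorphic to $\R^4$; then (c) says $B$ is standard, and a standard $B^4$ has the property that every boundary diffeomorphism extends (again by coning), which translates back to $f$ extending over $\R^4$.

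**The main obstacle** I expect is the careful bookkeeping in passing between the three viewpoints---germs at infinity, embedded spheres in $S^4$, and smoothings of $B^4$---particularly ensuring that the end structure of $\R^4$ is handled correctly. The technical heart is matching a diffeomorphism \emph{at infinity} (a germ) to genuine compact objects: one must choose representatives on honest closed neighborhoods of infinity and verify that the constructions respect the equivalence relation and the isotopy relation, so that the logical implications descend to the level of germs. A second subtlety is the role of orientation: since we restrict to $\D_+^\infty$, I would check that all the caps and gluings can be performed orientation-compatibly, and that the uniqueness of the smooth $4$-ball is being invoked only in the orientation-preserving sense where it is equivalent to the stated Schoenflies problem. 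Finally, I would be careful that ``smoothing of $B^4$'' in (c) is interpreted so that its interior smoothing is the standard $\R^4$, which is exactly the hypothesis that makes the equivalence clean rather than tautological.
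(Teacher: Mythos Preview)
Your proposal has genuine gaps in two of the three implications.

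\textbf{The step (b)$\Rightarrow$(c).} You assert that the double $B\cup_\partial\overline B$ is diffeomorphic to $S^4$, but this is not justified. The double of a smooth homotopy $4$-ball is only a smooth homotopy $4$-sphere; concluding it is $S^4$ would require the smooth $4$-dimensional Poincar\'e Conjecture, which is at least as hard as what you are proving. The hypothesis $\inter B\cong\R^4$ is precisely what lets you avoid this: push $B$ into its own interior along a boundary collar, obtaining an embedding $B\hookrightarrow\inter B\cong\R^4\subset S^4$, and then apply (b) to the image of $\partial B$. You never use the hypothesis $\inter B\cong\R^4$ in your argument, which is a sign something is wrong.

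\textbf{The step (a)$\Rightarrow$(b).} You say that after deleting a point, the embedding $g\co S^3\to S^4$ ``induces a self-diffeomorphism of a neighborhood of infinity'' of $\R^4$. But to write down such a self-diffeomorphism you need the unbounded complementary region of $g(S^3)$ in $\R^4$ to be diffeomorphic to $S^3\times[0,\infty)$, i.e., you need the corresponding closed region in $S^4$ to already be a ball. That is exactly (b), so the argument is circular as written. The paper avoids this by running the cycle in the \emph{opposite} direction via contrapositives, and the nontrivial content lands in $\neg$(b)$\Rightarrow\neg$(c): given a Schoenflies-violating $S\subset S^4$, deleting a point of $S$ exhibits $\R^4$ as an end sum $R_1\natural R_2$ of the two open complementary regions, and the Eilenberg swindle
\[
R_1\approx R_1\natural(R_2\natural R_1)\natural\cdots\approx(R_1\natural R_2)\natural(R_1\natural R_2)\natural\cdots\approx\R^4
\]
forces both $R_i$ to be standard. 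This swindle is the key idea missing from your sketch; without it (or an equivalent), there is no way to manufacture the product neighborhood of infinity you need.

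Your (c)$\Rightarrow$(a) is essentially correct, though you should make explicit why the compact region $B$ you build has $\inter B\cong\R^4$: its complement in $\R^4$ is carried by $f$ to a standard $S^3\times[R,\infty)$, and absorbing that half-cylinder into the boundary collar of $B$ gives the identification.
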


\begin{proof} Not (a) $\implies$ Not (b): Let $f\co Y\to Y'$ represent a diffeomorphism at infinity that does not extend over $\R^4$, and let $B\subset\R^4$ be a ball containing the complement of $Y$. Then $f|(Y-\inter B)$ cannot extend over $\R^4$, so $f|\partial B$ violates the Schoenflies Conjecture. (Note that if an embedding $S^3\to S^4$ extends to a ball on one side, then it also extends to a ball on the other side.) Since the topological Schoenflies Theorem is known \cite{Br}, \cite{Ma}, \cite{Mo}, the same reasoning proves the first sentence of the proposition.

 Not (b) $\implies$ Not (c): Suppose $S\subset S^4$ is an embedded 3-sphere violating the Schoenflies Conjecture. Then the closed complements cannot be diffeomorphic to $B^4$. However, they must be homeomorphic to $B^4$ by the topological Schoenflies Theorem. Thus, it suffices to show that the components $R_1$ and $R_2$ of $S^4-S$ are diffeomorphic to $\R^4$. Deleting a point of $S$ from $S^4$ exibits $R_1\sqcup R_2$ as the complement of a properly embedded $\R^3$ in $\R^4$. Equivalently, we have $\R^4$ exhibited as the end sum $R_1\natural R_2$. (See Section~\ref{Review}.) By the appendix  of \cite{infR4}, no exotic $\R^4$ has an inverse under end sum, so both summands are standard. (The proof is a trick known in algebra as the Eilenberg Swindle, also used by Mazur \cite{Ma} for the Schoenflies Theorem: $R_1\approx R_1\natural(R_2\natural R_1)\natural(R_2\natural R_1)\natural\cdots\approx (R_1\natural R_2)\natural(R_1\natural R_2)\natural\cdots\approx \R^4$.)

 Not (c) $\implies$ Not (a): Let $B$ be an exotic 4-ball whose interior is diffeomorphic to $\R^4$. Then a collar of $\partial B\approx S^3$  gives an embedding $\R^4-\inter B^4\approx [1,\infty)\times \partial B\to\inter B\approx\R^4$ representing a diffeomorphism at infinity that cannot extend over $\R^4$ since the image of each $\{t\}\times \partial B$ bounds a copy of $B$.
\end{proof}

Digressing briefly, we describe $\D_+^\infty(\R^4)$ more concretely. The set of $S^4$-homeomorphs (up to orientation-preserving diffeomorphism) forms a countable (possibly trivial) abelian monoid $\S4$ under connected sum. Removing a 4-handle identifies $\S4$ with the monoid of $B^4$-homeomorphs under boundary sum. The group $\S4_0$ of invertible elements of $\S4$ corresponds to the $B^4$-homeomorphs that embed in $\R^4$, or equivalently, those whose interiors are diffeomorphic to $\R^4$.

\begin{thm}\label{DR4} The groups $\D_+^\infty(\R^4)$ and $\S4_0$ are canonically isomorphic.
\end{thm}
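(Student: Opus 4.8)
The plan is to build the isomorphism explicitly in the direction $\D^\infty_+(\R^4)\to\S4_0$ and verify it is a well-defined homomorphism that is both injective and surjective. Given a class in $\D^\infty_+(\R^4)$, first represent it by an orientation-preserving proper embedding $f\co\R^4\setminus\inter B\to\R^4$, where $B$ is the standard unit ball (any closed neighborhood of infinity can be shrunk to this form). Its image is a closed neighborhood of infinity, and I set $\Phi([f])=[\mathcal B_f]$, where $\mathcal B_f=\cl(\R^4\setminus\im f)$ is the compact region bounded by the smooth $3$-sphere $f(\partial B)$. Since $\R^4=\mathcal B_f\cup\im f$ exhibits $\mathcal B_f$ with a product end $\im f\cong S^3\times[1,\infty)$ attached, we have $\inter\mathcal B_f\cong\R^4$, so $\mathcal B_f$ represents an element of $\S4_0$ by the description preceding the theorem. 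I would first check that $\Phi$ is well defined: shrinking the domain to $\{|x|\ge r\}$, or passing to an equivalent germ, enlarges $\mathcal B_f$ only by an external product collar $f(S^3\times[1,r])$ and so leaves its diffeomorphism type unchanged; and a proper isotopy of $f$ induces a smooth isotopy of the bounding spheres $f(\partial B)$, hence (by isotopy extension) a diffeomorphism of the regions they bound.

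Surjectivity is immediate from the construction already used in the proof of Proposition~\ref{Schoen}: given $\mathcal B\in\S4_0$, fix a diffeomorphism $\inter\mathcal B\cong\R^4$ and let $f_{\mathcal B}$ be the induced embedding of a collar of $\partial\mathcal B$, namely $\R^4\setminus\inter B\cong S^3\times[1,\infty)\hookrightarrow\inter\mathcal B\cong\R^4$. Then $\im f_{\mathcal B}$ is a collar neighborhood of the end, its complementary region is $\mathcal B$ with a boundary collar removed, and so $\Phi([f_{\mathcal B}])=[\mathcal B]$.

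For the homomorphism property I must show $\Phi([f]\cdot[g])=\Phi([f])+\Phi([g])$, i.e. $\mathcal B_{f\circ g}\cong\mathcal B_f\natural\mathcal B_g$. Computing the image of a composite representative, a point of $\R^4$ fails to lie in $\im(f\circ g)$ precisely when it lies in $\mathcal B_f$ or is the $f$-image of a point of $\inter\mathcal B_g$; thus $\mathcal B_{f\circ g}=\mathcal B_f\cup f\bigl(\mathcal B_g\cap(\R^4\setminus\inter B)\bigr)$, the two pieces meeting along a subset of $f(\partial B)=\partial\mathcal B_f$. After first isotoping $g$ (permissible, since $\Phi$ and composition are isotopy-invariant) so that the compact ball $\mathcal B_g$ meets $\partial B$ in a single $3$-disk, this union becomes exactly a boundary sum, giving the claim. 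I expect this positioning-and-isotopy step to be the main technical work of the argument.

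Finally, injectivity follows from triviality of $\ker\Phi$ together with the homomorphism property. So suppose $\mathcal B_f$ is the standard $B^4$. Then $f|\partial B\co\partial B\to\partial\mathcal B_f$ is an orientation-preserving diffeomorphism of $3$-spheres, which extends over the $4$-ball because $\pi_0\Diff_+(S^3)$ is trivial (Cerf); filling in such an extension on $B$ produces a diffeomorphism $\hat f\co\R^4\to\R^4$ representing the same germ at infinity as $f$. Since $\D_+(\R^4)$ is trivial, $\hat f$ is isotopic to the identity, whence $[f]=r([\hat f])$ is trivial in $\D^\infty_+(\R^4)$. Assembling well-definedness, surjectivity, the homomorphism property, and triviality of the kernel yields the canonical isomorphism $\D^\infty_+(\R^4)\cong\S4_0$.
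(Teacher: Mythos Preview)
Your proof is correct and follows essentially the same architecture as the paper's: define the map by taking the complementary $B^4$-homeomorph of the image of a boundary sphere, check well-definedness, verify the homomorphism property, cite the Schoenflies discussion for surjectivity, and deduce injectivity from triviality of $\D_+(\R^4)$. Two minor tactical differences are worth noting. For the homomorphism step, the paper instead precomposes $f$ by a translation to make $B_f$ and $B_g$ disjoint and then isotopes $f$ to fix $B_g$, so that an ambient boundary sum of $B_f$ and $B_g$ directly exhibits $\sigma(g\circ f)$ as $\sigma(g)\natural\sigma(f)$; this sidesteps the positioning of $\mathcal B_g$ relative to $\partial B$ that you flag as the main technical work. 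For the kernel, you explicitly invoke Cerf's theorem to extend $f|_{\partial B}$ over the ball before appealing to $\D_+(\R^4)=1$, whereas the paper absorbs this extension step into the phrase ``follows from triviality of $\D_+(\R^4)$''; your version is the more careful reading of what is actually needed.
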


\begin{proof} To construct a function $\sigma\co\D_+^\infty(\R^4)\to\S4_0$, let $f\co Y\to\R^4$ represent some element of $\D_+^\infty(\R^4)$. Let $B_f\subset\R^4$ be a smooth ball containing $\R^4-Y$, and let $\sigma(f)$ be the $B^4$-homeomorph bounded by $f(\partial B_f)$. This only depends on the isotopy class of $f|\partial B_f$, so is independent of choice of representative $f$ and ball $B_f$. To show that $\sigma$ is a homomorphism, let $f$ and $g$ represent arbitrary elements of $\D_+^\infty(\R^4)$. After precomposing $f$ by a translation, we may assume $B_f$ and $B_g$ are disjoint. After a further ambient isotopy of $f$ we may assume $f$ fixes $B_g$. Letting $B_{g\circ f}$ be the ambient boundary sum of $B_g$ and $B_f$ then exhibits $\sigma(g\circ f)$ as the boundary sum  of $\sigma(g)$ and $\sigma(f)$. Now surjectivity follows as in the last paragraph of the previous proof. Triviality of the kernel follows from triviality of $\D_+(\R^4)$, which can be seen by representing a given element of $\D_+(\R^4)$ by a diffeomorphism fixing 0 and its tangent space, then conjugating by dilations.
\end{proof}

Using Proposition~\ref{Schoen}, we obtain cardinality information about restriction homomorphisms.

\begin{thm}\label{cok} For every $\R^4$-homeomorph $R$, the kernel and cokernel of $r$ are countable.
\end{thm}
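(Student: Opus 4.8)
The plan is to prove the two finiteness claims by different mechanisms, both anchored by the fact (already used in the proof of Proposition~\ref{Schoen}) that a smoothly embedded $3$-sphere lying near infinity in $R$ bounds a topological ball on its compact side, so that this side is a $B^4$-homeomorph, together with the countability of the monoid $\S4$ of $S^4$-homeomorphs. Throughout, I would fix once and for all a smooth $3$-sphere $S\subset R$ near infinity. Such an $S$ exists because the single end of $R$ is homeomorphic to that of $\R^4$, so some smooth neighborhood of infinity has boundary a smooth homotopy $3$-sphere, necessarily diffeomorphic to $S^3$ by the $3$-dimensional Poincar\'e conjecture. Then $S$ separates $R$ into a compact core $K$---a $B^4$-homeomorph with $\partial K=S$---and a neighborhood of infinity $N$.

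For the kernel I would first reduce to compactly supported diffeomorphisms. If $r([\phi])$ is trivial, then $\phi$ is properly isotopic near infinity to the inclusion; extending this proper isotopy ambiently replaces $\phi$, within its class in $\D(R)$, by a diffeomorphism that is the identity on some neighborhood of infinity, hence supported in a compact set. Thus the kernel is a quotient of $\pi_0\Diff_c(R)$, the set of isotopy classes of compactly supported self-diffeomorphisms. Exhausting $R$ by compact codimension-$0$ submanifolds $K_1\subset K_2\subset\cdots$, every compactly supported diffeomorphism and every compactly supported isotopy is supported in some $K_n$, so $\pi_0\Diff_c(R)=\varinjlim_n\pi_0\Diff(K_n\ \rel\ \partial K_n)$. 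Each factor is countable: for a compact manifold the diffeomorphism group rel boundary, in the $C^\infty$ topology, is second countable and, by the local contractibility of diffeomorphism groups, locally path connected, and a space with these two properties (hence Lindel\"of with open path components) has at most countably many path components. A countable direct limit of countable sets is countable, so the kernel is countable. It is precisely this second-countability that fails at infinity in the strong topology, which is why $\D(R)$ itself can be uncountable.

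For the cokernel I would use Cerf's theorem $\pi_0\Diff_+(S^3)=1$ to convert extendability into a diffeomorphism question about compact pieces. Given a germ $h$ at infinity represented by $h\co Y\to Y'$, the image $h(S)$ is a smooth $3$-sphere near infinity, so by the topological Schoenflies theorem its compact side is a $B^4$-homeomorph $D_h$, an element of the countable set of diffeomorphism types of $B^4$-homeomorphs. I claim $h$ extends over $R$---i.e.\ $h\in r(\D(R))$---if and only if $D_h$ is diffeomorphic to the core $K$: to extend $h$ one must fill in a diffeomorphism $K\to D_h$ agreeing on $S$ with $h|S$, and since $h|S$ is an orientation-preserving self-diffeomorphism of $S^3$, Cerf's theorem lets any diffeomorphism $K\to D_h$ be adjusted in a collar to realize the prescribed boundary value. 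Since two germs $f,g$ lie in a common coset of $r(\D(R))$ exactly when $g\circ f^{-1}$ extends, the coset of $f$ is detected by the diffeomorphism type of $D_{g\circ f^{-1}}$, and assigning to each coset the resulting $B^4$-homeomorph type exhibits the cokernel as a subquotient of a countable set.

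The main obstacle, and the step I would spend the most care on, is the well-definedness underlying the cokernel count. The region $D_h$ depends a priori on the choice of representing sphere, and passing between two concentric spheres near infinity changes $D_h$ by gluing on a smooth cobordism between $3$-spheres that is only \emph{topologically} a product, the smooth $4$-dimensional $h$-cobordism theorem being unavailable. I would control this by working modulo such collars---equivalently, by capping $D_h$ with a standard ball to land in $\S4$ and checking that the induced map on cosets is insensitive to the collar ambiguity, which itself lives in the countable monoid $\S4$---so that even after accounting for it the invariant takes countably many values. Verifying that the resulting assignment is constant exactly on the cosets of $r(\D(R))$, the analog of the multiplicativity $\sigma(g\circ f)=\sigma(g)\natural\sigma(f)$ from Theorem~\ref{DR4}, is the crux of the argument.
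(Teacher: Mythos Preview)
Your kernel argument is essentially the paper's, phrased in direct-limit language, and is fine.

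The cokernel argument has a genuine gap at its very first step. You assert that a smooth $3$-sphere $S\subset R$ exists near infinity because the end of $R$ is homeomorphic to that of $\R^4$. This is false for many exotic $\R^4$-homeomorphs. For a large exotic $\R^4$ such as $R_L$ (Section~\ref{Review}), the compact set $K_L$ embeds in no closed definite $4$-manifold, hence in no homotopy $4$-sphere; but a smooth $S^3$ surrounding $K_L$ would bound a $B^4$-homeomorph (topological Schoenflies) whose double is a homotopy $S^4$ containing $K_L$. Thus no smooth $S^3$ in $R_L$ surrounds $K_L$, so none exists near infinity. The same obstruction applies to the small $R_S$ via Proposition~\ref{exotic}. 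The topological product structure on the end gives only a \emph{topological} $3$-sphere, and upgrading it to a smooth one is exactly what fails in dimension~4.

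Everything downstream in your cokernel argument---the invariant $D_h$, the use of Cerf's theorem, the comparison with $\S4$---rests on this nonexistent $S$. The paper avoids the issue entirely: rather than seeking a smooth $S^3$, it exhausts $R$ by arbitrary compact codimension-$0$ submanifolds $K_n$, uses Proposition~\ref{Schoen} to extend a given germ to a homeomorphism of $R$ that is smooth outside some $K_n$, and then counts the induced smoothings on $K_n$ rel $\partial K_n$. The countability comes from the fact that a compact manifold has only countably many rel-boundary smoothings, not from $\S4$. Your ``main obstacle'' paragraph correctly senses a difficulty but misidentifies it: the problem is not well-definedness of $D_h$ but its existence.
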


\begin{proof}
Exhibit $R$ as a nested union of compact submanifolds $K_n$. Each element of the kernel of $r$ is represented by a self-diffeomorphism $f$ of $R$ that is the identity outside of some $K_n$. Since $K_n$ is a compact manifold, it has only countably many isotopy classes of self-diffeomorphisms fixing $\partial K_n$. One of these, extended by the identity over $R$, contains $f$. Ranging over countably many $n$, we obtain the entire kernel of $r$ as the image of a countable set.

For the cokernel, Proposition~\ref{Schoen} shows that every element of $\D^\infty(R)$ extends to a homeomorphism $f$ of $R$ that is a diffeomorphism in the complement of some $\inter K_n$. Then $K_n$ inherits a new smoothing pulled back by $f$. Suppose the corresponding homeomorphism $g$ for another element of $\D^{\infty}(R)$ also restricts diffeomorphically to $R-\inter K_n$. If the smoothings induced on $K_n$ by $f$ and $g$ are related by a diffeomorphism $\varphi$ rel boundary, then we extend $\varphi$ over $R$ by the identity, and see that $g\circ\varphi\circ f^{-1}$ is a diffeomorphism. Replacing $g$ by $g\circ\varphi$, we conclude that the two elements of  $\D^{\infty}(R)$ lie in the same coset of $r(\D(R))$. Since there are only countably many rel boundary diffeomorphism types of compact manifolds with boundary $\partial K_n$, we see that only countably many cosets have representatives $f$ as above for each fixed $n$. Ranging over all $n$ yields the entire cokernel.
\end{proof} 

 We conclude the section with our cut-and-paste conventions. For a codimension-0 embedding $C\subset X$ of a compact manifold and a diffeomorphism $f\co \partial C\to\partial C$, we can form a new manifold $X_f$ by cutting out $C$ and regluing it via $f$. Analogously, for an open manifold $R\subset X$ and a self-diffeomorphism $f$ of $R$ at infinity, we obtain a well-defined manifold $X_f$ by cutting out a sufficiently large compact subset of $R$ and regluing a copy of $R$ by a representative of $f$. We can alternatively modify $X$ by replacing $R$ with a different manifold that is diffeomorphic to $R$ at infinity. Analogously to the compact case, the diffeomorphism type of $X_f$ only depends on $f$ through its class in $\D^\infty(R)$. (Given isotopic embeddings $f_i\co Y\to R$, the manifolds $X_{f_i}$ are determined by their restrictions to $\partial Y$. Fixing these, we can assume the maps $f_i$ agree near infinity by the Isotopy Extension Theorem.) When $C$ is contractible, every $f$ extends homeomorphically over $C$. (For example, realize $C\cup_f C$ as the boundary of a contractible 5-manifold, identified as an h-cobordism relative to a product structure over $\partial C$ realizing $f$, and apply Freedman's topological h-Cobordism Theorem.) Thus, $X_f$ is homeomorphic to $X$. The same conclusion holds for $R$ homeomorphic to $\R^4$ by Proposition~\ref{Schoen}. The corresponding statements fail in the smooth category, giving us a useful way to recognize when some $\R^4$-homeomorphs are exotic:

\begin{prop}\label{exotic} Let $f$ be a self-diffeomorphism at infinity of an $\R^4$-homeomorph $R$. Suppose that for some embedding $R\subset X$, the manifolds $X_f$ and $X$ are distinguished by their Seiberg-Witten invariants. Then $R$ has a compact subset that cannot be surrounded by a smooth 3-sphere. In particular, $R$ is not diffeomorphic to $\R^4$.
\end{prop}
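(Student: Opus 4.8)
The plan is to prove the contrapositive: assuming every compact subset of $R$ can be surrounded by a smooth 3-sphere, I will produce a diffeomorphism $X_f\cong X$, contradicting the hypothesis that their Seiberg-Witten invariants differ. The concluding sentence is then immediate and independent of this: the standard $\R^4$ has every compact set contained in a round ball, hence surrounded by a smooth 3-sphere, so failure of this property forces $R$ not to be diffeomorphic to $\R^4$.

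To begin, I would fix a representative $f\co Y\to Y'$ of the germ and set $K=R\setminus\inter Y$, the compact core cut out in forming $X_f$, choosing $Y$ so that $K$ is large enough for the cut-and-paste construction. By hypothesis some smooth 3-sphere $S$ surrounds $K$; let $B\subset R$ be the compact region it bounds, so $K\subset\inter B$ and the exterior of $S$ lies in $Y$, where $f$ is defined. Cutting $X$ along the smoothly embedded $S$ gives $X=B\cup_S W$ with $W=X\setminus\inter B$ the complementary piece containing the end of $R$ together with the rest of $X$. The crux is to perform the defining cut of $X_f$ along this same $S$ and thereby identify $X_f=B\cup_{f|_S}W$: the same two pieces, glued along $S\cong S^3$, with gluing data differing only by the self-diffeomorphism $f|_S$ of $S^3$.

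Granting that identification, the regluing is invisible. Taking $f$ orientation-preserving (the situation when Seiberg-Witten invariants are used to detect exoticness), $f|_S$ is an orientation-preserving self-diffeomorphism of $S^3$, so it is isotopic to the identity by Cerf's theorem $\pi_0\Diff_+(S^3)=1$. Spreading such an isotopy across a collar of $S$ inside $W$ produces a diffeomorphism $B\cup_{\id}W\to B\cup_{f|_S}W$, giving $X_f\cong X$. Note that this uses only a collar of the smooth 3-sphere $S$ and never extends a diffeomorphism over the interior of $B$---which could itself be an exotic ball---so the argument sidesteps the open smooth Schoenflies problem.

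I expect the main obstacle to be the bookkeeping of the middle step: checking that, once $S$ is chosen to surround the cut-out region, the whole discrepancy between $X$ and $X_f$ is concentrated in $f|_S$ up to isotopy, the twisting of $f$ along the collar of infinity contributing nothing more. This should follow from the fact, recorded in the cut-and-paste conventions above, that $X_f$ depends only on the class of $f$ in $\D^\infty(R)$, together with the Isotopy Extension Theorem used to normalize $f$ near $S$. Everything else reduces to the standard principle that regluing along a two-sided $S^3$ by an isotopically trivial map does not change the diffeomorphism type.
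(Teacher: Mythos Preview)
Your argument has a genuine gap at the crux step. You claim $X_f = B \cup_{f|_S} W$ with ``the same two pieces,'' but this is not what the construction gives. When you cut out a compact set and reglue a fresh copy of $R$ via $f$, the region that ends up inside $S$ in $X_f$ is not the original $B$: it is the region $B'\subset R$ bounded by the sphere $f(S)$ in the \emph{new} copy of $R$. So in fact $X_f = W \cup_{f|_S} B'$. There is no reason for $B'$ to be diffeomorphic to $B$ (rel boundary or otherwise). Both are $B^4$-homeomorphs by the topological Schoenflies Theorem, but they are bounded by different smoothly embedded $3$-spheres $S$ and $f(S)$ in $R$, and knowing they agree is precisely a Schoenflies-type statement. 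So contrary to your closing remark, the argument does not sidestep the Schoenflies problem---it tacitly assumes it. Cerf's theorem only absorbs the gluing map $f|_S$; it cannot repair a mismatch between the two $B^4$-homeomorphs being glued in.

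The paper's proof avoids this by not attempting to prove $X_f \cong X$ at all. It stops at the observation that $X$ and $X_f$ are each obtained from the common piece $W = X - \inter B$ by capping with some $B^4$-homeomorph (possibly different ones), and then invokes the standard neck-stretching argument: stretching along the $S^3$ boundary shows that the Seiberg-Witten invariants of such a manifold depend only on $W$, not on which $B^4$-homeomorph caps it. This yields the weaker but sufficient conclusion that $X$ and $X_f$ have the same Seiberg-Witten invariants, contradicting the hypothesis.
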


\begin{proof}
Abusing notation, let $f\co Y\to R$ represent the diffeomorphism at infinity. We construct $X_f$ by removing the compact subset $K=X-\inter Y$ from $X$ and gluing in another copy of $R$ via $f$. Suppose $K\subset R$ is surrounded by a smooth 3-sphere $S\subset Y\subset R$.  Then $S$ also lies in $X$. By the topological Schoenflies Theorem, $S$ bounds a $B^4$-homeomorph $B$ in $R$, and another such $B'$ in the reglued copy of $R$. Thus, $X$ and $X_f$ are both obtained from $X-\inter B$ by attaching a $B^4$-homeomorph. By a well-known ``neck stretching" argument, the Seiberg-Witten invariants cannot distinguish $X_f$ from $X$.
\end{proof}

\begin{Remark} By similar reasoning, the smooth Schoenflies Conjecture is equivalent to the assertion that the diffeomorphism type of a 4-manifold cannot be changed by a diffeomorphism at infinity of an embedded standard $\R^4$, or that 4-manifolds must be diffeomorphic if they are diffeomorphic after a point is removed. If these statements are false, they already fail for the 4-sphere. (An exotic 4-sphere is obtained from a counterexample to Proposition~\ref{Schoen}(c) by adding a 4-handle. It becomes standard when a point is removed, and is made from gluing two copies of the standard $\R^4$ whose images are each the complement of a point.)
\end{Remark} 


\section{Review of exotic smoothings of $\R^4$}\label{Review}

The known exotic $\R^4$-homeomorphs fall into two types, both originating in Casson's work \cite{C}. These arise from the topological success (Freedman \cite{F}) and smooth failure (Donaldson \cite{D}, \cite{Dpoly}) of high dimensional topology in dimension 4. One type stems from surgery theory, and the other from the h-Cobordism Theorem as discussed in Section~\ref{Intro}. We now review both types. A more extensive overview of the foundations appears in \cite{GS}.

Using topological surgery theory, one can construct an $\R^4$-homeomorph $R_L$ that is diffeomorphic at infinity to a simply connected, spin 4-manifold $V_L$ whose intersection form is positive definite and nontrivial. Then $R_L$ is necessarily exotic, since it cannot embed in any closed, positive definite 4-manifold $X$: If it did, we could replace $R_L\subset X$ by $V_L$, obtaining a closed 4-manifold with a definite but nondiagonalizable intersection form, contradicting Donaldson's Theorem \cite{D} (or \cite{FSpi1}, \cite{Dpi1} in the nonsimply connected case). In fact, $R_L$ contains a compact subset $K_L$ that cannot embed in such an $X$. Simply choose $K_L$ to contain a slightly smaller exotic $\R^4$ for which the above argument still applies. (Throughout the subsequent discussion, our compact subsets can be taken, after enlargement, to be smooth 4-manifolds with boundary.) Every $\R^4$-homeomorph $R$ contains a {\em radial family} of $\R^4$-homeomorphs, obtained by choosing a homeomorphism $h\co\R^4\to R$ and letting $R_r$ be the image of the open ball at 0 with radius $r$ (with its smoothing inherited as an open subset of $R$). We can assume \cite{Q} that $h$ is a local diffeomorphism near the $x_1$-axis. If $R_{r_0}$ contains the compact manifold $K_L$, then the members of the uncountable radial family $\{R_r\st r\ge r_0\}$ are exotic and pairwise nondiffeomorphic. More generally, given embeddings $K_L\subset R'\subset R$ with $R$ and $R'$ homeomorphic to $\R^4$ and the closure of $R'$ compact in $R$, the manifolds $R$ and $R'$ cannot even have diffeomorphic ends. Otherwise, one could replace a subset of $K_L$ in $R$ by $V_L$, then extend the end by consecutively attaching infinitely many copies of the region between the ends of $R'$ and $R$, to obtain a manifold with a ``periodic end" and nondiagonalizable intersection form. This would contradict an extension of Donaldson's Theorem due to Taubes \cite{Tb}. As before, one can apply the argument to a slightly smaller $\R^4$-homeomorph with compact closure in $R$, to find a compact subset of $R$ that does not embed in $R'$.

To more systematically organize $\R^4$-homeomorphs by their compact subsets, we write $R\le R^*$ if every compact subset (equivalently, compact, codimension-0 submanifold) of $R$ embeds in $R^*$, and call $R$ and $R^*$ {\em compactly equivalent} if $R\le R^*\le R$, i.e., if $R$ and $R^*$ share all the same compact submanifolds. This is an equivalence relation, with the set of compact equivalence classes forming a partially ordered set. We call an $\R^4$-homeomorph {\em small} if it is compactly equivalent to $\R^4$, and {\em large} otherwise. The above uncountable radial family  $\{R_r\st r\ge r_0\}$ of large $\R^4$-homeomorphs then injects into the set of compact equivalence classes, and its image has the order type of its parameter set $[r_0,\infty)$. (With more work, one can realize the order type of $\R^2$.) One can further measure the size of $R$ using its {\em Taylor invariant} $\gamma(R)$ \cite{Ta}. This is the smallest $b$ for which every compact subset of $R$ can be embedded into some closed, spin 4-manifold with $b_+=b_-=b$. If no such $b$ exists, we set $\gamma(R)=\infty$. The Taylor invariant is a monotonic function on the poset of compact equivalence classes. Clearly, $\gamma(R)$ vanishes if $R$ is small and is positive if $R$ contains $K_L$. We henceforth assume that $R_L$ has been arranged to have finite Taylor invariant. (If necessary, we replace $R_L$ by a slightly smaller $R_r\subset R_L$. This is embedded in some compact submanifold of $R_L$, which we then double to obtain a suitable spin manifold containing $R_r$ and hence all of its compact subsets.) We also assume that $R_L$  embeds in $\blow$, so it has no orientation-reversing self-diffeomorphism. (This can be done without sacrificing the spin condition on $V_L$; e.g.\ \cite[Example~5.10]{Ta} or \cite[Theorem~3.4]{B}.)

To obtain a small exotic $\R^4$, we apply the h-cobordism method discussed in Section~\ref{Intro}. Given a nontrivial h-cobordism $W$ of a closed, simply connected 4-manifold $X$, we obtain an exotic $\R^4$ $R\subset X$ and an involution $\tau$ of the end of $R$ such that the other boundary component of $W$ is $X_\tau$. Then $R$ is a small exotic $\R^4$ since it canonically embeds in $S^4$. In fact, the entire nontrivial part of the h-cobordism over $R$ embeds into the trivial h-cobordism $I\times S^4$, e.g.\ \cite[after Exercise~9.3.3]{GS}, showing that $S^4_\tau$ is diffeomorphic to $S^4$. We focus on the h-cobordism that generated Akbulut's original cork \cite{A} and was subsequently used to give a simple handle description of an exotic $\R^4$ \cite{BG}, \cite{GS}. Let $X=K3\#\blow$ be a blown up K3-surface. This is h-cobordant, but not diffeomorphic, to $\# 3\CP^2\#20\blow$, which is $X_\tau$ for a suitably embedded small $\R^4$ $R_S\subset X$ with involution $\tau$ of its end. Let $K_S\subset R_S$ be a compact subset whose complement lies in the domain of the given diffeomorphism $\tau$. For any radial family $\{R_r\st r\ge r_0\}$ in $R_S$ with $K_S\subset R_{r_0}$, the periodic end technology of Taubes can be applied to show that no two of the pairs $(R_r,K_S)$ are diffeomorphic (\cite{menag}, based on a similar example of DeMichelis and Freedman \cite{DF}). Since there are only countably many isotopy classes of embeddings of $K_S$ into a fixed $R_r$, it follows that each diffeomorphism type in $\{ R_r\}$ is realized only countably often (although sometimes more than once, Remark~\ref{nonunique}), so we obtain uncountably many diffeomorphism types of small $\R^4$-homeomorphs (with the cardinality of the continuum in ZFC set theory) \cite{DF}. We can construct these so that $\tau$ preserves each $R_r$, and so that uncountably many diffeomorphism types admit {\em Stein structures} \cite{Ann}, i.e., complex structures properly biholomorphically embedding in $\C^N$ for $N$ sufficiently large. By \cite{steindiff}, whenever a contractible open manifold embeds in $\R^4$ and admits a Stein structure, it admits one that biholomorphically embeds in a ball in $\C^2$ (so is a {\em domain of holomorphy}).  In contrast, an exotic $\R^4$ with nonzero Taylor invariant cannot admit a Stein structure \cite{Ta}. However, we can choose our large exotic $R_L$ to smoothly embed into some Stein surface \cite{B}. (We could alternatively arrange $K_L$ not to embed in any Stein surface \cite{B}, but this would sacrifice our crucial embedding into $\blow$.)

We can construct more $\R^4$-homeomorphs with the {\em end sum} operation, the noncompact analog of the boundary sum. (This appears to have been first introduced in \cite{3R4}, \cite{infR4}; a more comprehensive treatment will be given in \cite{CG}.) We sum two open 4-manifolds $V_1$ and $V_2$ by using $I\times \R^3$ like a piece of tape: Choose a ray in each $V_i$, i.e., a proper embedding $\gamma_i\co [a,\infty)\to V_i$, then form the identification space of $V_1\sqcup V_2$ with $I\times \R^3$, identifying $[0,\frac12)\times \R^3$ with a tubular neighborhood of the image of $\gamma_1$, and $(\frac12,1]\times \R^3$ with a neighborhood of the image of $\gamma_2$, matching 4-manifold orientations. More generally, we can sum together any countable family $\{V_s\st s\in \Sigma\}$, by summing each $V_s$ to the identity element $\R^4$ using a {\em multiray}, a proper embedding $\gamma\co \Sigma\times [a,\infty)\to\R^4$ (with the discrete topology on $\Sigma$). This sum $\natural_{s\in \Sigma}V_s$ is independent of all choices, provided (for example) that each $V_s$ is simply connected at infinity. Furthermore, it is unchanged if each ray is truncated by restricting to $[b_s,\infty)$ for some $b_s>a$, so we allow a multiray to have variable initial points in its domain. (For more on when the sum is well-defined, see \cite{CG}.) In place of a multiray, it is enough to allow a disjoint family of rays in $\R^4$ indexed by $\Sigma$ (with the union of the images not necessarily closed). Properness of the resulting combined map can then be arranged by truncating so that the $n^{\rm th}$ ray avoids the ball of radius $n$. For $\R^4$-homeomorphs, end sums are well-defined on compact equivalence classes and preserve the partial ordering: If $R_s\le R'_s$ for each $s\in \Sigma$ then $\natural_{s\in \Sigma}R_s\le\natural_{s\in \Sigma}R'_s$. (Every compact subset of $\natural_{s\in \Sigma}R_s$ can be enlarged to a boundary sum of compact submanifolds of the summands.) The Taylor invariant is subadditive: $\max_{s\in \Sigma}\gamma(R_s)\le \gamma(\natural_{s\in \Sigma}R_s)\le\sum_{s\in \Sigma}\gamma(R_s)$ \cite{Ta}. Thus, a finite sum of copies of $R_L$ has finite Taylor invariant. However, an infinite sum of these has infinite Taylor invariant: If there were an integer $b$ for which every compact submanifold of the infinite sum embedded in a closed, spin 4-manifold with $b_+=b_-=b$, then we could embed an arbitrarily large finite disjoint union of copies of $K_L$. Using these to glue in copies of $V_L$, we would obtain a spin manifold with $b_-$ fixed and $b_+$ arbitrarily large, contradicting Furuta's $\frac{10}{8}$-Theorem \cite{Fu}. Given an embedding $R\subset V$, it is not clear that $R\natural R$ embeds in $V\natural V$, since the point-set boundary of $R$ in $V$ could be complicated. (Consider an open ball in $\R^2$ minus a spiral approaching its boundary.)  However, if $R'\subset R$ lies in a radial family, then $R'\natural R'$ lies in $V\natural V$: Simply locate an arc in $V\natural V$ that intersects each copy of $R'$ in its smooth nonnegative $x_1$-axis, and add a neighborhood of it to $R'\sqcup R'$. If $V$ is Stein, it follows that every end sum of copies of $R'$ embeds in a Stein surface, since every end sum of Stein surfaces admits a Stein structure. (The end sums can be realized by incorporating Eliashberg 1-handles \cite{CE} into a handle decomposition of their disjoint union.)

We will now combine both exotic $\R^4$ constructions to obtain what we will call a {\em doubly uncountable family}: an uncountable set of compact equivalence classes, each of which contains uncountably many diffeomorphism types. Such an example was first produced in \cite{menag}, but we will need a sharper result. In the construction of $R_S$, the closed manifolds $X$ and $X_\tau$ are distinguished by their Donaldson invariants (equivalently, Seiberg-Witten invariants), so they remain nondiffeomorphic after  blowups (connected sums with copies of $\blow$). The Taubes machinery is not disturbed by blowups (provided that we reverse all orientations in the context of $R_L$ so that definiteness is preserved). Thus, we can replace $R_S$ in the construction by (for example) its end sum with $R_L\subset\blow$, by blowing up to allow the necessary embeddings. Lemma~7.3 of \cite{MinGen} suitably generalizes this method from \cite{menag} and implies the following:

\begin{lem}[\cite{MinGen}]\label{radial} Let $\{R_r\st r\ge r_0\}$ be a radial family in $R_S$ with $K_S\subset R_{r_0}$. Let $\{V_r\st r\ge r_0\}$ be any family of open 4-manifolds such that each compact subset of each $V_r$ embeds in a (finite) connected sum of copies of $\blow$. Then the family  $\{R_r\natural V_r\st r\ge r_0\}$ (for any choices of defining rays)  contains uncountably many diffeomorphism types.
\end{lem}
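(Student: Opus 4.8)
The plan is to reduce the lemma to a rigidity statement about the marked compact set $K_S$, and then to prove that rigidity by a Taubes periodic-end argument in which the summands $V_r$ are absorbed as harmless blow-ups. Throughout, I regard $K_S$ as a fixed compact submanifold lying in the $R_r$-summand of each $R_r\natural V_r$ (it sits in $R_{r_0}\subset R_r\subset R_r\natural V_r$, so is canonical). Since the parameter set $[r_0,\infty)$ is uncountable, it suffices to show that each diffeomorphism type occurs for only countably many $r$. So suppose a fixed open $4$-manifold $M$ is diffeomorphic to $R_r\natural V_r$ for all $r$ in some uncountable set $S\subset[r_0,\infty)$. Each such diffeomorphism carries $K_S$ to an embedded copy of $K_S$ in $M$, and there are only countably many isotopy classes of embeddings $K_S\emb M$ (as $K_S$ is compact and $M$ open). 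Hence two distinct parameters $r_1,r_2\in S$ produce isotopic images; composing with an ambient isotopy of $M$ carrying one image onto the other (isotopy extension) yields a diffeomorphism $R_{r_1}\natural V_{r_1}\to R_{r_2}\natural V_{r_2}$ taking the copy of $K_S$ onto the copy of $K_S$. Thus the lemma follows once I rule out such a \emph{pair-diffeomorphism} for $r_1\ne r_2$.

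To rule it out, I would argue exactly as in the proof that the pairs $(R_r,K_S)$ are pairwise nondiffeomorphic (DeMichelis--Freedman \cite{DF}, \cite{menag}), now carrying the $V_r$ along. Since $R_r$ is simply connected at infinity, its end-sum ray is unique up to isotopy and may be taken radial, so the $R_r$-side of the construction is unaffected by the choice of defining rays. Assume $r_1<r_2$ and that a pair-diffeomorphism $\phi$ exists. Recall the nesting $K_S\subset R_{r_0}\subset R_{r_1}\subset R_{r_2}\subset R_S\subset X$ with $X=K3\#\blow$, that the end of $R_S$ beyond $K_S$ carries the involution $\tau$, and that $X$ and $X_\tau$ are distinguished by their Seiberg--Witten invariants. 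Using $\phi$ together with the radial nesting, I would splice infinitely many copies of the region lying between the ends of $R_{r_1}$ and $R_{r_2}$ onto a core extracted from $X$, producing a smooth $4$-manifold with a periodic end that encodes the $X$ versus $X_\tau$ discrepancy. The point of the hypothesis on $V_r$ is that every compact subset of $V_r$ embeds in a connected sum of copies of $\blow$; the $V_r$-summands attached along the end-sum tubes can therefore be capped off using these embeddings, so that they contribute only $\blow$-connected summands to the pieces of the periodic end.

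I would then invoke Taubes' periodic-end extension of Donaldson's theorem \cite{Tb}. Since $X$ and $X_\tau$ have distinct Seiberg--Witten invariants and remain nondiffeomorphic after connected summing with any number of copies of $\blow$ (by the blow-up formula), and since the Taubes machinery is undisturbed by such blow-ups, the periodic-end manifold just constructed is obstructed --- contradicting its existence and hence the assumed pair-diffeomorphism. This establishes the rigidity and, with the reduction above, the lemma.

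The step I expect to be the main obstacle is the periodic-end construction: organizing the end sum so that the varying, a priori unrelated summands $V_r$ are cleanly absorbed as blow-ups while the periodicity coming from the radial nesting is preserved. The crux is that the relevant gauge-theoretic obstruction is carried entirely by the cork structure near $K_S$ (the $X$ versus $X_\tau$ discrepancy) and is insensitive to the $\blow$-summands produced by the $V_r$; making precise that end-summing with $\blow$-embeddable manifolds neither creates nor destroys the Taubes obstruction is exactly the content of the generalization recorded in \cite[Lemma~7.3]{MinGen}.
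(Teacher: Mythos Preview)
The paper does not actually prove this lemma; it is quoted from \cite{MinGen} (specifically, the paper says ``Lemma~7.3 of \cite{MinGen} suitably generalizes this method from \cite{menag} and implies the following'' and then states the lemma without proof). The paragraph preceding the lemma sketches the underlying idea: $X$ and $X_\tau$ are distinguished by Seiberg--Witten invariants, which survive blowups; the Taubes periodic-end machinery is undisturbed by blowups; and the hypothesis on the $V_r$ lets one absorb them as $\blow$-summands.

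Your proposal reconstructs exactly this strategy. The reduction via countability of isotopy classes of embeddings $K_S\hookrightarrow M$ is the standard DeMichelis--Freedman move and is correct. Your identification of the periodic-end construction as the core, with the $V_r$ capped off as blowups so as not to disturb the Taubes obstruction, matches the paper's description precisely. You are also honest that the genuine technical content---arranging the periodic end so that the unrelated, varying $V_r$ are absorbed cleanly while periodicity from the radial nesting survives---is exactly what \cite[Lemma~7.3]{MinGen} records. So your ``proof'' and the paper's treatment are at the same level: both outline the mechanism and defer the details to \cite{MinGen}. There is nothing to correct, but also nothing here beyond what the paper already says in prose.
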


\noindent From this, the following lemma produces our doubly uncountable families of $\R^4$-homeomorphs. (The lemma also includes a related technical statement used in the proof of Theorem~\ref{diffeotopy}). Let $\{R^*_t\st t\ge t_0\}$ be a radial family in $R_L$ with $K_L\subset R^*_{t_0}$. Fix $n\in\{0,1,2,\dots,\infty\}$. For each $r\ge r_0$ and $t\ge t_0$, let $R_{r,t}$ be the end sum of $R_r$ with $n$ copies of $R^*_t$ and a countable collection of small $\R^4$-homeomorphs.

\begin{lem}\label{doubly} (a) The set $\{R_{r,t}\st  r\ge r_0,\ t\ge t_0\}$ contains uncountably many diffeomorphism types. If $0<n<\infty$, it realizes a doubly uncountable family.
\item{(b)} There are uncountably many diffeomorphism types of Stein $\R^4$-homeomorphs $R$ embedded in $\R^4$, admitting no orientation-reversing self-diffeomorphisms, each arising as an infinite end sum of copies of some $\R^4$-homeomorph that embeds $\tau$-equivariantly in an infinite blowup of $R_S$, covering $K_S$. These embeddings extend to an embedding of $R$ into a blown up infinite sum of copies of $R_S$.
\end{lem}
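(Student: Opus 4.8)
My plan is to treat the two parameters of the family separately, using Lemma~\ref{radial} for the "vertical" direction and the radial structure of $R_L$ for the "horizontal" one.

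For the first assertion of (a) I would invoke Lemma~\ref{radial} directly. Fixing $t\ge t_0$, set $V_t$ equal to the end sum of the $n$ copies of $R^*_t$ with the given countable collection of small $\R^4$-homeomorphs, so that $R_{r,t}=R_r\natural V_t$. Since a compact subset of an end sum lies in a boundary sum of compact submanifolds of the summands, and since $R^*_t\subset R_L\subset\blow$ while each small summand embeds in $\R^4\subset\blow$, every compact subset of $V_t$ embeds in a finite connected sum of copies of $\blow$. Hence Lemma~\ref{radial} applies and already yields uncountably many diffeomorphism types among $\{R_{r,t}\st r\ge r_0\}$ for this single $t$. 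For the doubly uncountable claim (with $0<n<\infty$) I would first observe that, because end sums are well-defined on compact equivalence classes with $\R^4$ as identity, and because $R_r$ and every small summand is compactly equivalent to $\R^4$, the class of $R_{r,t}$ equals that of the $n$-fold sum $\natural^n R^*_t$; in particular it is independent of $r$. Thus each such class already contains uncountably many diffeomorphism types, and it remains only to see that distinct $t$ give distinct classes.

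The main obstacle is precisely this last point. Since $\{R^*_t\}$ is a radial family in $R_L$ with $K_L\subset R^*_{t_0}$, the argument reviewed in Section~\ref{Review} provides, for $t<t'$, a compact $C\subset R^*_{t'}$ that does not embed in $R^*_t$, so that $[R^*_t]<[R^*_{t'}]$ and, by monotonicity of end sum, $[R_{r,t}]\le[R_{r,t'}]$. To upgrade this to a strict inequality I must show that $C$, viewed inside one $R^*_{t'}$ summand of $R_{r,t'}$, does not embed in $R_{r,t}=R_r\natural(\natural^n R^*_t)\natural(\mathrm{small})$. Were it to embed, it would lie in a boundary sum of compact pieces drawn from $S^4$-embeddable summands and from at most $n$ copies of $R^*_t$, which is diffeomorphic at infinity to the definite manifold $V_L$; capping this configuration into a definite periodic-ended manifold and applying Taubes' extension of Donaldson's theorem would force a diagonalizability incompatible with the level-$t'$ topology of $C$. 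This is exactly the obstruction packaged by Lemma~7.3 of \cite{MinGen} (generalizing \cite{menag}), and I expect it to be the crux. Granting it, $t\mapsto[R_{r,t}]$ is strictly increasing on $[t_0,\infty)$, hence injective, so its image is uncountable, completing (a).

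For (b) I would construct a single $\tau$-equivariant Stein block and sum infinitely many copies. Starting from the radial family in $R_S$, which (as arranged in Section~\ref{Review}) is preserved by the end involution $\tau$ and uncountably many of whose members are Stein, and allowing blowups, I would produce for each radial level a small block $W$ containing $K_S$ that embeds $\tau$-equivariantly in the infinite blowup $R_S\#\infty\blow$, admits a Stein structure, and (being small) embeds in $\R^4$. Setting $R=\natural_\infty W$, the manifold $R$ is Stein because an end sum of Stein surfaces is Stein via Eliashberg $1$-handles \cite{CE} and the radial-family trick of Section~\ref{Review} realizes the infinite sum inside a Stein surface; being small and Stein it biholomorphically embeds in a ball in $\C^2$ by \cite{steindiff}, in particular in $\R^4$. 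End-summing the block embeddings, again through the radial-family trick, embeds $R$ into the blown-up infinite sum $\natural_\infty(R_S\#\infty\blow)$, carrying the summand-wise $\tau$-action later needed in Theorem~\ref{diffeotopy}.

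Finally, uncountably many diffeomorphism types arise as in (a): writing $R=R_r\natural V_r$ with $R_r$ a genuine radial member of $R_S$ carrying $K_S$ and $V_r$ the remaining infinite sum of blown-up blocks, whose compact subsets embed in finite connected sums of $\blow$ since each block embeds in $R_S\#\infty\blow$, Lemma~\ref{radial} supplies the conclusion. For the absence of orientation-reversing self-diffeomorphisms I would argue analogously to the case $R_L\subset\blow$, but \emph{without} importing the large $R_L$ (which would destroy smallness and the Stein property): such a diffeomorphism would, through the cut-and-paste of Section~\ref{Review} and the embedding into the blown-up $R_S$-sum, contradict the orientation-sensitive Donaldson-invariant asymmetry between $X=K3\#\blow$ and $X_\tau$ that persists under blowups. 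The main difficulty in (b) is coordinating all of these constraints simultaneously — maintaining $\tau$-equivariance and the Stein/$\R^4$-embedding through the blowups while keeping the Taubes obstruction alive against the negative-definite blowup targets, so that the infinite summation neither collapses the distinction among types nor manufactures an orientation reversal.
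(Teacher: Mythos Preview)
Your treatment of (a) is essentially correct and parallels the paper's. For the strict ordering of compact equivalence classes, the paper takes a cleaner route than your direct Taubes sketch: it simply observes that when $0<n<\infty$ the family $\{\natural n R^*_t\}$ can itself be realized as a radial family in $\natural n R_L$ with smallest member containing $K_L$, so the standard argument of Section~\ref{Review} gives injectivity into the poset of compact equivalence classes immediately. Your periodic-end sketch is aiming at the same obstruction but is garbled as written (the boundary sum you describe is compact, so ``diffeomorphic at infinity to $V_L$'' does not parse, and Lemma~7.3 of \cite{MinGen} concerns diffeomorphism types, not compact equivalence).

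Part (b) has two genuine gaps. First, you never construct the Stein block. The paper's concrete move is to take a nested (Cantor-set--indexed) family $\{R_r\}$ of the form $R(\Delta)$ in $R_S$ and, for each $r$, form $R^+_r$ by deleting all \emph{negative} double points from the Casson handle at every stage; this is exactly what makes $R^+_r$ Stein, gives $R_r\subset R^+_r$ rel $K_S$, and (by blowing up the deleted negative double points) yields the embedding $R^+_r\hookrightarrow R_r\#\infty\blow$ rel $K_S$ that you need. The uncountability then comes from a direct periodic-end argument using the chain $R_r\subset R_s\subset R^+_s\cong R^+_r\subset R_r\#\infty\blow$, not from Lemma~\ref{radial} as you propose (your decomposition $R=R_r\natural V_r$ with $R_r$ a radial member of $R_S$ is not available, since the summands are $R^+_r$, not $R_r$). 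Second, your orientation argument fails: the mechanism you invoke (``analogously to $R_L\subset\blow$'') depends on $\overline{K_L}$ not embedding in negative-definite manifolds, which has no analog for small $\R^4$-homeomorphs. The paper instead proves a structural asymmetry: $\overline R\#\infty\blow$ is a nested union of blown-up balls (since $\overline{R^+_r}$ has only negative double points, which blow up to embedded disks and hence genuine 2-handles on $E(\overline\Delta)$), whereas $R\#\infty\blow$ cannot be such a union, because that would place $K_S$ inside a blown-up ball and hence, via the embedding in $X$, inside a negative-definite summand of some $X\#n\blow$, contradicting the Seiberg--Witten distinction between $X$ and $X_\tau$. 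Your final paragraph correctly senses that this is the crux, but the missing ingredient is precisely this nested-blown-up-balls criterion.
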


\begin{proof}
For (a), note that every compact subset of an infinite end sum lies in a finite sub-sum. For fixed $t$, Lemma~\ref{radial} then gives uncountably many diffeomorphism types, all compactly equivalent to the $n$-fold end sum $\natural nR^*_t$. For $0<n<\infty$, it is easy to realize $\{\natural nR^*_t\st  t\ge t_0\}$ as a radial family with smallest member containing $K_L$, so it injects into the poset of compact equivalence classes, with the order type of its parameter set. The proof of (b) is related but harder, so deferred to the end of Section~\ref{Corks}.
\end{proof} 

\noindent When $n=0$, each $\R^4$-homeomorph $R_{r,t}$ is small, so they are all compactly equivalent. Otherwise, they are all large, with $\gamma(R_{r,t})>0$. When $n=\infty$, there is no apparent way to distinguish compact equivalence classes, since the manifolds $\natural n R^*_t$ no longer fit into a radial family: The members of a radial family must have compact closure inside the ambient manifold, and hence, finite Taylor invariant. Lemma~7.3 of \cite{MinGen}, and hence the above lemmas, apply to a nested family $\{ R_r\}$ that is not necessarily radial, but parametrized by an uncountable subset of an interval (with $\cl R_r\subset R_{r'}$ compact and $K_S\subset R_r$ whenever $r<r'$). This observation will be useful in the proofs of Lemma~\ref{doubly}(b) and Theorem~\ref{mainThm}.

Next, we present a slightly generalized and simplified version of Taylor's work on isometries \cite{Ta2}. Taylor begins by defining a {\em barrier $S^3$} in an $\R^4$-homeomorph to be a flat, topologically embedded 3-sphere that can't be moved off of itself by a diffeomorphism (from a neighborhood of the 3-sphere to another open subset). We apply the same idea to more general compact subsets.

\begin{de} An $\R^4$-homeomorph $R$ will be called {\em full} if there is a compact subset $K\subset R$ that does not embed in $R-K$ or in any homology 4-sphere.
\end{de}

\begin{thm}[cf.\ \cite{Ta2}]\label{full} (a) An $\R^4$-homeomorph $R$ is full if (and only if) it contains compact subsets $K_1$ and $K_2$ such that $K_1$ does not embed in any homology 4-sphere and there is a finite bound on the number of copies of $K_2$ that disjointly embed in $R-K_2$.
\item{(b)} An $\R^4$-homeomorph $R$ is full if it contains a copy of $K_L$ and has finite Taylor invariant.
\item{(c)} An infinite end sum of copies of a fixed $\R^4$-homeomorph cannot be full.
\end{thm}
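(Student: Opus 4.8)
The plan is to show directly from the definition that an infinite end sum $W=\natural_{s\in\mathbb N}V$ of copies of a fixed $\R^4$-homeomorph $V$ is \emph{not} full, by verifying that \emph{every} compact subset $K\subset W$ embeds in its complement $W-K$. Since fullness requires a single compact $K$ that embeds neither in $W-K$ nor in any homology $4$-sphere, producing such an embedding for every $K$ defeats the first alternative uniformly and so precludes fullness. The mechanism is the self-similarity of the infinite sum: deleting finitely many summands returns a sum diffeomorphic to $W$, so $W$ contains arbitrarily many disjoint copies of any of its compact pieces.

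First I would recall (as noted for part~(a)) that every compact $K\subset W$ lies in a finite sub-sum $W_m=\natural_{s=1}^{m}V$, a codimension-$0$ submanifold of $W$. Next, using associativity of the end sum together with the fact that $\natural_{s>m}V\cong\natural_{s\in\mathbb N}V=W$, I would regroup $W$ as $W_m\,\natural\,W'$ where $W'=\natural_{s>m}V$ is diffeomorphic to $W$. In the end sum construction the two summands sit as \emph{disjoint} codimension-$0$ submanifolds (the connecting tube $I\times\R^3$ separates them), so $W'$ is disjoint from $W_m$, hence $W'\subset W-K$. Finally, since $W'\cong W$ contains a sub-sum diffeomorphic to $W_m$, and $K$ embeds in $W_m$, we obtain a smooth embedding $K\hookrightarrow W_m\hookrightarrow W'\subset W-K$. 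As $K$ was arbitrary, $W$ is not full.

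The main obstacle is purely the end-sum bookkeeping rather than any $4$-dimensional geometry: one must justify that the infinite sum is insensitive to regrouping and to the deletion of finitely many summands, and that the factors of an end sum can be taken disjoint with the shifted copy of $K$ landing cleanly in the complement. All of this follows from the independence-of-choices property of end sums recorded above (and treated in detail in \cite{CG}), using that $\R^4$-homeomorphs are simply connected at infinity so that the relevant sums are well defined. Alternatively, the same disjoint-copies argument feeds directly into the criterion of part~(a): for any compact $K_2\subset W_m$ the blocks $\natural_{(k-1)m<s\le km}V$, $k=2,3,\dots$, each contain a disjoint copy of $K_2$, so no compact subset admits a finite bound on disjoint copies in its complement, and (a) then rules out fullness.
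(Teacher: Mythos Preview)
Your proposal treats only part~(c); for that part it is correct and takes essentially the same approach as the paper---every compact subset of the infinite end sum lies in a finite sub-sum, and the remaining summands furnish disjoint copies, so no compact set can witness fullness. The paper compresses this into a single sentence, while you spell out the end-sum regrouping and disjointness more carefully, but the idea is identical. (Parts~(a) and~(b) are not addressed in your proposal.)
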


\noindent The proof of (b) is a simplification of Taylor's proof that barriers exist under the same hypotheses. (The statement of that theorem is missing the crucial spin hypothesis for $V_L$.)
\begin{proof}
For (a), embed the maximal number of disjoint copies of $K_2$ in $R-K_2$, and let $K\subset R$ be a compact set containing all of these copies together with $K_1$ and $K_2$. Then $K$ satisfies the definition. For (c), note that every compact subset of the infinite sum lies in a finite sub-sum and so has infinitely many disjointly embedded copies. For (b), we show that $R$ satisfies (a). Set $K_1=K_2=K_L$. This does not embed in any homology 4-sphere (or other positive definite 4-manifold). Suppose we can embed $n$ disjoint copies of $K_L$ into $R-K_L$. Since their union is compact, it also embeds into a closed, spin 4-manifold $X$ with $b_+(X)=b_-(X)\le\gamma(R)$. Gluing in $n$ copies of $V_L$, we get a closed, spin manifold with $b_+=b_+(X)+nb_+(V_L)$ and $b_-=b_-(X)\le\gamma(R)$. Since $\gamma(R)$ is finite, Furuta's $\frac{10}{8}$-Theorem gives an upper bound on $n$.
\end{proof}

\begin{thm}[Taylor \cite{Ta2}]\label{Taylor} For every $C^1$ Riemannian metric on a full $\R^4$-homeomorph, the isometry group is finite.
\end{thm}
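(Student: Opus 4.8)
The plan is to show that an infinite isometry group would force $K$ into a homology $4$-sphere, contradicting fullness. Write $G=\mathrm{Isom}(R)$. First I would record that $G$ is a Lie group acting smoothly on $R$, by Myers--Steenrod (the $C^1$ hypothesis is exactly what guarantees, via Calabi--Hartman regularity, that isometries are genuine diffeomorphisms and that $G$ is Lie). Fullness supplies a compact $K\subset R$ that embeds in neither $R-K$ nor any homology $4$-sphere. The first half of this is used to tame $G$: for any isometry $g$, if $g(K)\cap K=\emptyset$ then $g|_K$ would embed $K$ into $R-K$, so in fact $g(K)\cap K\ne\emptyset$ for every $g\in G$. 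Hence, with $D=\mathrm{diam}\,K$, every $x\in K$ satisfies $d(x,g(x))\le 2D$ for all $g$ (choose $a\in K$ with $g(a)\in K$ and use the triangle inequality), so all orbits of $G$ are bounded. In the complete case this makes the isometric $G$-action proper with precompact orbits, so $G$ is compact, and I would reduce the (possibly incomplete) general case to this via the persistence of the bounded-orbit estimate. If $G$ is finite we are done, so suppose $G$ is infinite; then $\dim G\ge 1$ and $G$ contains a circle subgroup $S^1$.

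The heart of the argument is to rule out this $S^1$-action. Since $R$ is contractible, it is $\Z_p$-acyclic for every prime $p$, so Smith theory forbids a free $\Z_p\subset S^1$ (a finite group cannot act freely on a finite-dimensional $\Z_p$-acyclic space) and guarantees $\mathrm{Fix}(S^1)\ne\emptyset$. I would then pass to the orbit space $Q=R/S^1$. Because the action is smooth and $S^1$ compact, $Q$ is a $3$-dimensional space, a manifold away from the images of the fixed and exceptional orbits, with $\pi\co R\to Q$ modeled near each fixed point on the linear quotient $\C^2\to \C^2/S^1$. Here I would invoke $3$-manifold topology: $R$ is simply connected at infinity (being homeomorphic to $\R^4$), and I expect this to push down, together with contractibility of $Q$, to identify $Q$ with $\R^3$. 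Granting this, $\pi(K)$ is compact, so I can enclose it, together with the finitely many singular points it meets, in a smoothly embedded $3$-ball $B\subset Q$. Its preimage $C=\pi^{-1}(B)$ is an $S^1$-invariant compact neighborhood of $K$ whose boundary $\pi^{-1}(\partial B)$ is a smooth $3$-sphere and which, being assembled from the linear models, is a homology $4$-ball. Capping $C$ along its spherical boundary produces a homology $4$-sphere containing $K$.

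This contradicts the second half of fullness, so $G$ must be finite. The hard part will be the middle paragraph: producing the \emph{smooth} enclosing homology $4$-sphere from the hypothetical $S^1$-action. This is where both the positive-dimensionality of $G$ and the non-embedding condition are essential, and it is precisely the genuinely four-dimensional point---a compact set in an exotic $\R^4$ is in general \emph{not} surroundable by a smooth homology sphere, so it is the existence of the continuous symmetry that illegitimately forces surroundability. The technical risks I would watch are: verifying $Q\cong\R^3$ (a Whitehead-type tameness issue to be resolved by simple connectivity at infinity), checking that $C=\pi^{-1}(B)$ really is a homology ball across the singular strata (handled by the local linear models, including the case where $\mathrm{Fix}(S^1)$ is $2$-dimensional), and discharging the completeness assumption used to obtain compactness of $G$. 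Each of these is the substance of Taylor's original analysis, which the present streamlined argument reorganizes.
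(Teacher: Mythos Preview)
Your proposal is correct and follows essentially the same two-step strategy that the paper sketches (attributing it to Taylor \cite{Ta2}): first use the condition that $K$ cannot be moved off itself to force compactness of the isometry Lie group, then show that a smooth circle action on an exotic $\R^4$ would embed every compact subset into a homology 4-sphere, contradicting the other half of fullness. The paper gives no details beyond this outline, so your filling-in via Smith theory and orbit-space analysis is a plausible reconstruction of Taylor's argument, and you have correctly identified the genuine technical pressure points (compactness without completeness, identifying $Q\cong\R^3$, and the homology-ball structure of $\pi^{-1}(B)$).
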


\noindent Taylor begins by essentially proving that a compact subset that cannot be moved off of itself guarantees compactness of the Lie group of isometries. (This is stated for a barrier $S^3$, but works in general.) Then, for any exotic $\R^4$ with a smooth circle action, he shows that every compact subset can be embedded into a homology 4-sphere. Thus, fullness guarantees that the isometries form a compact, 0-dimensional Lie group.

\section{Uncountable diffeotopy}\label{Groups}

We now present and prove our main theorems on the diffeotopy groups $\D(R)$ and $\D^\infty(R)$ of a large family of $\R^4$-homeomorphs. First, we specify the groups that will act on our manifolds $R$. Let $\Sigma$ be a countable, discrete set, and let $\Gamma\co \Sigma\times [0,\infty)\to\R^4$ be an injection whose restriction $\gamma_s$ to each $\{s\}\times [0,\infty)$ is a ray. For example, we could take $\Gamma$ to be the inclusion $\Q^3\times[0,\infty)\subset\R^3\times \R=\R^4$ where $\Q^3$ is given the discrete topology. Let $G$ be any group in the discrete topology.

\begin{de} A $G$-action on $\R^4$ will be called {\em $\Gamma$-compatible} if $G$ acts effectively on $\Sigma$ so that for each $g\in G$ and $(s,t)\in \Sigma\times [0,\infty)$ we have $g\circ\Gamma(s,t)=\Gamma(g(s),t)$, and so that the stabilizer of each $s\in \Sigma$ fixes (pointwise) a neighborhood of $\gamma_s([0,\infty))$.
\end{de}

Let $\G$ denote the set of all groups $G$ such that, for some choice of $\Gamma$, $G$ acts $\Gamma$-compatibly on $\R^4$. Let $\G_+$ denote the subset for which the action can be chosen to preserve orientation, and let $\G^*\subset\G_+$ denote the subset for which there is such an action fixing a neighborhood of some ray $\gamma$ in $\R^4$ whose image is disjoint from that of $\Gamma$. Without loss of generality, we may assume that $\Sigma$ is infinite in each case: When $\Sigma$ is finite, we can equivariantly replace each $s\in \Sigma$ by a copy of $\Z^+$.

\begin{prop} The sets $\G$, $\G_+$ and $\G^*$ are closed under passing to subgroups. The set $\G^*$ is closed under  countable direct products.
\end{prop}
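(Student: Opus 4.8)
The plan is to treat the two assertions separately, as the first is essentially formal while the second requires a genuine construction.

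For closure under subgroups, suppose $G\in\G$ acts $\Gamma$-compatibly on $\R^4$ and $H\le G$. I would simply restrict both the $\R^4$-action and the induced $H$-action on $\Sigma$. Conditions (2) and (3) of $\Gamma$-compatibility are inherited verbatim: condition (2) is a pointwise identity that held for every $g\in G$, and the $H$-stabilizer of each $s\in\Sigma$ is contained in its $G$-stabilizer, so it fixes the same neighborhood of $\gamma_s$. The only point needing a remark is effectiveness of the $H$-action on $\Sigma$: an $h\in H$ acting trivially on $\Sigma$ acts trivially as an element of $G$, hence is the identity since $G$ acts effectively. Thus $H\in\G$. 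Restriction preserves both the orientation-preserving property and the fixing of a neighborhood of the auxiliary ray, so the identical argument yields closure of $\G_+$ and $\G^*$.

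For closure of $\G^*$ under countable direct products, let $\{G_i\}_{i\in I}$ be a countable family in $\G^*$, with $G_i$ acting $\Gamma_i$-compatibly and orientation-preservingly on a copy $V_i$ of $\R^4$, fixing a neighborhood of an auxiliary ray $\gamma^{(i)}$ whose image is disjoint from that of $\Gamma_i$. The idea is to assemble a single $\R^4$ by an equivariant end sum: take a base copy of $\R^4$ on which the product acts trivially, choose a multiray in it together with one extra disjoint ray, and end sum each $V_i$ to the base along $\gamma^{(i)}$. Because $G_i$ fixes a neighborhood of $\gamma^{(i)}$, its action extends over the tape and the base by the identity, and since an end sum of copies of $\R^4$ is again $\R^4$, this produces an action on a manifold diffeomorphic to $\R^4$. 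I would then let $(g_i)_i\in\prod_i G_i$ act as $g_i$ on the $i$-th summand and as the identity elsewhere; these actions have pairwise disjoint supports, each contained in a single summand away from its gluing region, so at any point at most one factor acts nontrivially, the assignment is a well-defined smooth orientation-preserving diffeomorphism, and it defines a homomorphism from the full direct product.

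Setting $\tilde\Sigma=\coprod_i\Sigma_i$ and letting $\tilde\Gamma$ be the union of the $\Gamma_i$ (whose rays remain proper and avoid the gluing regions), I would verify the three compatibility conditions: effectiveness of $\prod_i G_i$ on $\tilde\Sigma$ follows factorwise; condition (2) for a point of $\Sigma_j\times[0,\infty)$ reduces to condition (2) for the single factor $G_j$ on its own summand; and the stabilizer of $s\in\Sigma_j$ is $\bigl(\prod_{i\ne j}G_i\bigr)\times\mathrm{Stab}_{G_j}(s)$, which fixes a neighborhood of $\gamma_s$ because the factors with $i\ne j$ act trivially on the $j$-th summand while $\mathrm{Stab}_{G_j}(s)$ fixes such a neighborhood by condition (3) for $G_j$. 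Finally, the extra base ray, being disjoint from the gluing multiray and from every $\Gamma_i$-image, has a neighborhood fixed by the entire product and is disjoint from $\tilde\Gamma$, placing the product in $\G^*$. The main obstacle is the end-sum bookkeeping: arranging the infinitely many summands, the gluing multiray, and the surviving auxiliary ray to be simultaneously disjoint with the combined multiray proper, and confirming that rays escaping to infinity within a summand stay proper in the combined $\R^4$. This is exactly where the hypothesis $\G^*$ is essential, since the gluing must be performed along rays carrying fixed neighborhoods—the precise extra data distinguishing $\G^*$ from $\G$ and $\G_+$—and it is this data that also guarantees a fixed ray survives in the final manifold.
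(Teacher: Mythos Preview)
Your approach is essentially the same as the paper's: restrict for subgroups, and for countable products end-sum the $G_i$-spaces $V_i$ along their fixed auxiliary rays $\gamma^{(i)}$ to a central copy of $\R^4$ carrying the trivial action, then combine the $\Gamma_i$ into a single $\tilde\Gamma$ and reserve an extra ray in the base. The verification of $\Gamma$-compatibility and the role of the $\G^*$ hypothesis are as you describe.

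The one point you correctly flag as ``bookkeeping'' but do not actually resolve is the properness of the rays of $\Gamma_i$ after the end sum. Your parenthetical ``whose rays remain proper and avoid the gluing regions'' is not automatic: the image of $\Gamma_i$ is only assumed disjoint from the image of $\gamma^{(i)}$, not from an entire tubular neighborhood of it, and with $\Sigma_i$ infinite the rays could cluster toward $\gamma^{(i)}$. The paper handles this by tapering: identify $\Sigma_i$ with $\Z^+$ and choose the closed tubular neighborhood $C$ of $\gamma^{(i)}$ so that the fibers over $[n-1,n]$ miss the first $n$ rays of $\Gamma_i$. Then each $\gamma_n^{-1}(C)$ is compact, so each ray remains proper after the sum is performed using $\inter C$. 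With that detail supplied, your argument is complete.
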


\begin{proof}
The first sentence is obvious. For the second, take any $G\in \G^*$ with its action, $\Gamma$ and $\gamma$. Identify the countable set $\Sigma$ with $\Z^+$. Inside the $G$-fixed neighborhood of $\gamma([0,\infty))$, construct a closed tubular neighborhood $C$ such that the fibers over $[n-1,n]$ are disjoint from the first $n$ rays of $\Gamma$. Then each $\gamma_n^{-1}(C)$ is compact, so each $\gamma_n$ continues to be proper if we use $\inter C$ to perform an end sum on $\R^4$. Now given a countable family of such $\Gamma_i$-compatible groups $G_i$ in $\G^*$, we can end sum all of their corresponding $G_i$-spaces to another copy of $\R^4$ (with the trivial action) along a multiray $\Gamma'$ in the latter. The resulting manifold is diffeomorphic to $\R^4$ with a $(\prod G_i)$-action that is $\Gamma$-compatible, where $\Gamma$ is made by combining all of the maps $\Gamma_i$.
\end{proof}

\begin{examples}\label{groups} (a) The group $\Q^3$ lies in $\G^*$. (Take $\Gamma$ as preceding the definition, and the obvious action on $\R^3\times[0,\infty)$, tapered to be trivial on $\R^3\times(-\infty,-1]$.) By the proposition, we now have many uncountable groups such as $\Q^\omega$ in $\G^*\subset \G_+\subset\G$. We can similarly realize uncountable nonabelian groups in $\G$ by by including an additional $\Gamma'$-compatible action on the last copy of $\R^4$ in the previous proof (where the groups $G_i$ over each of its orbits are isomorphic).

\item[(b)] Every countably generated free group lies in $\G_+$. Simply take a ray in an end sum of copies of $S^1\times\R^3$, then pass to the universal cover with its action by deck transformations. One can similarly get an action with orientation reversals by including a copy of the twisted $\R^3$-bundle over $S^1$.

\item[(c)] Every countable group $G$ of Euclidean or hyperbolic isometries of $\R^3$ lies in  $\G$ or (if preserving orientation) $\G_+$. To see this, choose a point $p\in\R^3$ with trivial stabilizer (by avoiding countably many subsets of positive codimension). Define $\Gamma\co G\times [0,\infty)\to\R^3\times \R=\R^4$ by $\Gamma(g,t)=(g(p),t)$. Similarly, every countable subgroup of $\SL(4,\R)$ lies in $\G_+$ and every countable subgroup of $\GL(4,\R)$ whose elements have determinant $\pm1$ lies in $\G$. (This time, $\Gamma$ is composed of radial rays. The determinant condition rules out positive multiples of the identity, so that each nontrivial group element changes the direction of generic vectors.)
\end{examples}

\begin{thm}\label{diffeotopy} Each of the following three descriptions is satisfied by uncountably many diffeomorphism types of $\R^4$-homeomorphs $R$.

\item[(a)] $R$ embeds in $\R^4$. Every $G\in\G$ has an action on $R$ that injects into the diffeotopy groups $\D(R)$ and $\D^\infty(R)$, with elements of $G$ preserving orientation on $R$ if and only if they do so in the defining  $G$-action on $\R^4$.

\item[(b)] $R$ admits a Stein structure embedding biholomorphically in a ball in $\C^2$, admits no orientation-reversing self-diffeomorphism, and satisfies the previous condition for each $G\in\G_+$.

\item[(c)]  $R$ is large, embeds in a Stein surface, admits no orientation-reversing self-diffeomorphism, and satisfies the condition in (a) for each $G\in\G_+$.
\end{thm}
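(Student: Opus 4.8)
The plan is to build a single master manifold $R$ from a $\Gamma$-compatible $G$-action on $\R^4$, and then to detect the nontriviality of the induced elements of $\D^\infty(R)$ by the cork-theoretic trick of cutting and regluing into a closed $4$-manifold and applying Seiberg--Witten invariants via Proposition~\ref{exotic}. The central idea is that a $\Gamma$-compatible action permutes the rays $\gamma_s$, hence permutes the summands in an equivariant end sum. So I would start with a doubly uncountable family as in Lemma~\ref{doubly}, and end sum along the multiray $\Gamma$ copies of the \emph{same} exotic building block $R_0$ at every index $s\in\Sigma$, where $R_0$ is chosen (depending on which of (a),(b),(c) we want) to embed in the relevant ambient space. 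Because $G$ acts on $\Sigma$ and fixes a neighborhood of each $\gamma_s$ pointwise under its stabilizer, the diffeomorphism of $\R^4$ given by any $g\in G$ lifts to a diffeomorphism of the end sum $R=\natural_{s\in\Sigma}R_0$ that simply permutes the identical summands according to $g$'s action on $\Sigma$. This gives the homomorphism $G\to\D(R)$, and composing with $r$ gives $G\to\D^\infty(R)$; the content is \emph{injectivity}.

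To prove injectivity I would take a nontrivial $g\in G$; by effectiveness it moves some $s_0\in\Sigma$, say $g(s_0)=s_1\neq s_0$. I would then argue that the diffeomorphism at infinity induced by $g$ cannot be trivial in $\D^\infty(R)$ by exhibiting a closed $4$-manifold $X$ with $R\subset X$ such that $X_g$ and $X$ have different Seiberg--Witten (equivalently Donaldson) invariants, and invoking Proposition~\ref{exotic} together with the remark that $\D^\infty$ classes only matter up to their effect on cutting and pasting. Concretely, I would embed $R$ into a blown-up closed manifold built so that the summand $R_0$ sitting over $\gamma_{s_0}$ covers the critical compact set $K_S$ governing the $K3\#\blow$ versus $\#3\CP^2\#20\blow$ distinction (via the $\tau$-equivariant embeddings supplied by Lemma~\ref{doubly}(b)); regluing by $g$ transports this critical region off $\gamma_{s_0}$ and onto $\gamma_{s_1}$, which the Seiberg--Witten argument of Lemma~\ref{radial}/Lemma~7.3 of \cite{MinGen} can detect as a change of diffeomorphism type. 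The orientation behaviour claim follows because $g$ preserves orientation on $R$ exactly when it does on $\R^4$, and for (b),(c) the absence of orientation-reversing self-diffeomorphisms follows from the embedding into $\blow$ (respectively a Stein surface) exactly as in the constructions of $R_S$, $R_L$ reviewed in Section~\ref{Review}.

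For the three flavours I would specialize $R_0$: in (a) take $R_0$ small and embedding in $\R^4$, with the whole sum embedding in $\R^4$ by running the defining rays along the standard $x_1$-axis; in (b) take $R_0$ Stein and small, so that the end sum of Stein pieces is Stein and embeds biholomorphically in a ball in $\C^2$ by the Eliashberg $1$-handle argument and \cite{steindiff} cited in Section~\ref{Review}; in (c) incorporate at least one large summand of type $R_L\subset\blow$ (or a Stein surface) to force $\gamma(R)>0$. The uncountability of diffeomorphism types in each case is precisely what Lemma~\ref{doubly} provides, by varying the radial parameters $r,t$ of the building blocks while keeping the $G$-action intact.

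The main obstacle I expect is the injectivity step for a \emph{general} $g$ rather than a single involution: the classical cork picture handles the $\Z_2$-twist, but here $g$ may have infinite order and complicated orbits on $\Sigma$. The key technical point to nail down is that detecting the move of the critical summand from $\gamma_{s_0}$ to a \emph{different} ray $\gamma_{s_1}$ genuinely changes the closed manifold's invariants — i.e.\ that the Seiberg--Witten computation is sensitive to \emph{which} ray carries $K_S$, and not merely to the presence of an exotic piece somewhere. I would resolve this by arranging the other summands to be neutral (embedding in connected sums of $\blow$, so invisible to the relevant invariant by Lemma~\ref{radial}) so that only the summand over $\gamma_{s_0}$ contributes, making the permutation-induced relocation detectable. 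Verifying that this neutrality survives the infinite end sum and that properness of $\Gamma$ is preserved (as in the $\G^*$ closure proof) is the remaining care required.
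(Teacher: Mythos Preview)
Your construction of $R$ as an equivariant end sum of identical copies of a model summand $R_0$ matches the paper's approach closely. The gap is in your injectivity argument.

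You propose to show that a nontrivial $g\in G$ is nontrivial in $\D^\infty(R)$ by finding an embedding $R\subset X$ with $X_g\not\approx X$. This cannot work: $g$ is a global self-diffeomorphism of $R$ by construction, so for \emph{any} embedding $R\subset X$ the manifold $X_g$ is diffeomorphic to $X$ (cutting out $R$ and regluing via $g$ is undone by applying $g^{-1}$ to the reglued copy). Your closing worry---whether the Seiberg--Witten invariants can detect ``which ray carries $K_S$''---is exactly the point of failure: they cannot, because the permutation of summands is realized by an honest diffeomorphism of $R$. Proposition~\ref{exotic} is also not the right tool here; it detects exoticness of $R$, not nontriviality of elements of $\D^\infty(R)$.

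The paper's argument is a contradiction that pivots on the involution $\tau$ (which does \emph{not} extend over $R_S$), not on $g$. One embeds $R$ in $X=K3\#\blow$ so that the summand over a chosen index $s$ is the distinguished $R_S\subset X$ containing $K_S$, while every other summand sits inside a standard ball. Now \emph{assume} $g$ is trivial in $\D^\infty(R)$. Then $g$ is isotopic rel $K_S$ to a diffeomorphism $g'$ of $R$ that is the identity near infinity, and $g'$ extends by the identity to a self-diffeomorphism of $X$. This diffeomorphism carries the $\tau$-twist on the summand at $s$ to the $\tau$-twist on the summand at $g(s)$; hence $X_\tau$ is also obtained by $\tau$-twisting on $g(R_S)$. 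But $g(R_S)$ lies in a ball in $X$, where the $\tau$-twist is known to be trivial (since $S^4_\tau\approx S^4$), giving $X_\tau\approx X$---contradicting the Donaldson/Seiberg--Witten distinction between $K3\#\blow$ and $\#3\CP^2\#20\blow$. So the role of $\tau$ is essential: it supplies a genuinely nonextendable end-diffeomorphism whose effect can be transported by $g'$ and then neutralized. Your proposal needs to be reorganized around this idea.
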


\begin{cor} There are uncountably many large $\R^4$-homeomorphs and uncountably many small Stein $\R^4$-homeomorphs whose diffeotopy groups $\D(R)$ and $\D^\infty(R)$ are uncountable. \qed
\end{cor}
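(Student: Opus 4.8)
The plan is to deduce this directly from Theorem~\ref{diffeotopy} by producing a single uncountable group in the admissible class $\G_+$. First I would recall from Examples~\ref{groups}(a) that $\Q^3$ lies in $\G^*$, and that by the Proposition immediately preceding those examples, $\G^*$ is closed under countable direct products. Hence the countable direct product $\Q^\omega=\prod_{n=1}^\infty\Q$ lies in $\G^*\subset\G_+$. Since $\Q$ is countably infinite, $\Q^\omega$ has the cardinality of the continuum, so this exhibits an uncountable group $G\in\G_+$.

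Next I would feed this single $G$ into the two relevant parts of Theorem~\ref{diffeotopy}. Applying part (c) yields uncountably many diffeomorphism types of large $\R^4$-homeomorphs $R$, each admitting a $G$-action that injects into both $\D(R)$ and $\D^\infty(R)$; since the injected copy of $G$ is uncountable, both $\D(R)$ and $\D^\infty(R)$ are uncountable for every such $R$. Applying part (b) to the same $G$ gives, in parallel, uncountably many diffeomorphism types of $\R^4$-homeomorphs admitting a Stein structure biholomorphically embedding in a ball in $\C^2$; such an $R$ embeds in $\R^4$ and is therefore small, and again $G$ injects into $\D(R)$ and $\D^\infty(R)$, forcing those groups to be uncountable. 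Together these furnish the desired large and small Stein families.

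There is no substantive obstacle here: the corollary is purely a matter of instantiating Theorem~\ref{diffeotopy} at one sufficiently large group, and the entire content reduces to verifying that the chosen $G$ is simultaneously uncountable and a member of $\G_+$. That verification is exactly what the closure of $\G^*$ under countable products provides, which is why the statement can be marked as immediate.
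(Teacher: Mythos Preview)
Your proposal is correct and is exactly the argument the paper intends: the corollary is marked \qed\ because it follows immediately from Theorem~\ref{diffeotopy}(b,c) once one has an uncountable group in $\G_+$, and Examples~\ref{groups}(a) already records $\Q^\omega\in\G^*\subset\G_+$ for precisely this purpose. Your observation that the Stein examples of (b) embed in $\C^2$ and are hence small is the only additional remark needed.
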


\begin{thm}\label{homeo} Each $R$ constructed in Theorem~\ref{diffeotopy} has a self-homeomorphism $h$ with the following properties:

\item[(a)]  $h$ restricts to an orientation-preserving diffeomorphism at infinity, and

\item[(b)]  for each $G$ in the theorem, if some element of $\Sigma$ has trivial stabilizer then the $G$-action on $R$ can be chosen so that after conjugation by $h$, the resulting smooth action at infinity injects into $\D^\infty(R)/r(\D(R))$.
\end{thm}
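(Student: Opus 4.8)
The plan is to use a single homeomorphism $h$ for both parts, built from the end-involution of a small exotic summand, and to exploit the fact that conjugating an \emph{extendable} permutation action by a \emph{non-extendable} twist produces diffeomorphisms at infinity that cannot extend. Recall that the manifolds $R$ of Theorem~\ref{diffeotopy} are infinite end sums whose summands are indexed by $\Sigma$ (the $G$-set), with the $G$-action permuting them according to $\Sigma$; every such construction contains copies of the small exotic $R_S$ carrying its orientation-preserving end-involution $\tau$ (a cork twist realizing the h-cobordism $W$). Since $\tau$ is a homeomorphism at infinity, it extends homeomorphically over $R_S$ by Proposition~\ref{Schoen}, so the map $h$ that applies $\tau$ to one chosen $R_S$-summand and the identity elsewhere is a self-homeomorphism of $R$ restricting to an orientation-preserving diffeomorphism at infinity. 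This proves (a) and produces a distinguished class $[h]^\infty\in\D_+^\infty(R)$.

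For (b), given $G$ with an element $s_0\in\Sigma$ of trivial stabilizer, choose the $\Gamma$-compatible action so that $h$ twists precisely the summand indexed by $s_0$; its $G$-orbit is then a free $G$-set. Conjugation by $[h]^\infty$ is an automorphism of $\D^\infty(R)$, so $g\mapsto f_g:=[h]^\infty\,[g]^\infty\,([h]^\infty)^{-1}$ is a homomorphism $G\to\D^\infty(R)$, i.e.\ a smooth action at infinity. The key computation is that, writing $\tau_s$ for the twist on the summand indexed by $s$, the $\tau_s$ on distinct summands are commuting involutions at infinity with $[g]^\infty\tau_s([g]^\infty)^{-1}=\tau_{g(s)}$, whence
\[
f_g=\tau_{s_0}\,[g]^\infty\,\tau_{s_0}=\tau_{s_0}\,\tau_{g(s_0)}\,[g]^\infty .
\]
Because $s_0$ has trivial stabilizer, $g(s_0)\ne s_0$ for every $g\ne e$, so $f_g$ is the original (extendable) permutation $[g]^\infty$ composed with a genuine pair of cork twists on the two distinct $R_S$-summands indexed by $s_0$ and $g(s_0)$.

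To conclude, compose with the projection to $\D^\infty(R)/r(\D(R))$. Since $f_{g_1}^{-1}f_{g_2}=f_{g_1^{-1}g_2}$, injectivity into the cokernel reduces to $f_g\notin r(\D(R))$ for all $g\ne e$; and as $[g]^\infty$ already lies in $r(\D(R))$ (it restricts the honest diffeomorphism $g$), this reduces further to showing that the pair of twists $\tau_{s_0}\tau_{g(s_0)}$ does not extend to a diffeomorphism of $R$. Here I would embed $R$ in the ambient manifold supplied by Lemma~\ref{doubly}(b) — a blown-up infinite end sum of copies of $R_S$, in which the relevant $R_S$-summands cannot be surrounded by smooth $3$-spheres — and invoke the cork/h-cobordism detection behind Proposition~\ref{exotic}: regluing by the twist changes the Donaldson (equivalently Seiberg--Witten) invariants, which are stable under blowups and detect the nontriviality of $W$, so by the contrapositive of the extension principle no diffeomorphism of $R$ can restrict to $f_g$.

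The main obstacle is precisely this detection step. A single cork twist on an $R_S$-summand is detected by comparing $X$ with $X_\tau$ for $X=K3\#\blow$, but here two summands are twisted at once, and one cannot simply connect-sum two copies of $X$, since Seiberg--Witten invariants vanish on connected sums with $b_+>0$ on both sides; moreover each $R_S$ is \emph{small}, hence surroundable by a smooth $3$-sphere inside $S^4$. The resolution is to carry out the detection inside the single ambient manifold of Lemma~\ref{doubly}(b), where the two twists are localized in disjoint, individually non-surroundable regions, and to propagate non-extension through the periodic-end / nested-family technology (the generalization of Lemma~7.3 of \cite{MinGen} underlying Lemmas~\ref{radial} and~\ref{doubly}), which is blowup-stable and insensitive to the connected-sum vanishing. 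Checking that this machinery applies to the specific configuration $\{s_0,g(s_0)\}$ for every $g\ne e$ is the technical heart of the argument.
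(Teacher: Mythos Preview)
Your construction of $h$ and the conjugation formula $f_g=\tau_{s_0}\tau_{g(s_0)}[g]^\infty$ are correct and match the paper's setup, and your reduction to showing that $\tau_{s_0}\tau_{g(s_0)}\notin r(\D(R))$ is valid. The gap is entirely in the detection step, and your proposed resolution is misdirected. Lemma~\ref{doubly}(b) embeds $R$ into a blown-up infinite end sum of copies of $R_S$, which is not closed, so Seiberg--Witten invariants and Proposition~\ref{exotic} do not apply there; your claim that the relevant $R_S$-summands ``cannot be surrounded by smooth 3-spheres'' is false since $R_S$ is small; and the periodic-end machinery of Lemmas~\ref{radial}--\ref{doubly} distinguishes diffeomorphism types of $\R^4$-homeomorphs, not extendability of end-diffeomorphisms. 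You have correctly identified the obstacle (two twists, connected-sum vanishing) but not a way around it.

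The paper's resolution is much simpler and sidesteps the double-twist problem entirely. It reuses the \emph{asymmetric} embedding $R\subset X$ from the proof of Theorem~\ref{diffeotopy} (Figure~\ref{daisy}): identify the summand indexed by $s_0$ with the original $R_S\subset X=K3\#\blow$, and embed every other summand---in particular the one indexed by $g(s_0)$---into the standard $\R^4$, hence into an $S^4$-piece of $X$. With this embedding, twisting $X$ by $h=\tau_{s_0}$ gives $X_\tau$, while the conjugate twist $g\circ h\circ g^{-1}=\tau_{g(s_0)}$ is supported in an $S^4$-summand and leaves $X$ unchanged. If $h^{-1}gh$ agreed near infinity with a diffeomorphism $f$ of $R$, then $f\circ g^{-1}$ would be a self-diffeomorphism of $R$, giving
\[
X_\tau=X_h\approx X_{h\circ(f\circ g^{-1})}=X_{g\circ h\circ g^{-1}}\approx X,
\]
a contradiction. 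In your language: under this embedding $\tau_{g(s_0)}$ is invisible to $X$, so the double twist $\tau_{s_0}\tau_{g(s_0)}$ acts on $X$ exactly as the single twist $\tau_{s_0}$, which is already known to change the invariants. No second copy of $X$, no periodic ends, and no appeal to Lemma~\ref{doubly}(b) are needed. Cases~(b) and~(c) are handled by the same trick after discarding summands outside a compact set and passing to a finite blowup, exactly as in the proof of Theorem~\ref{diffeotopy}.
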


Since $h$ preserves orientation, it is topologically ambiently isotopic to the identity \cite{Q}. Thus, the conjugated group action at infinity is both diffeomorphic at infinity to the original action on $R$ and topologically (but not smoothly) isotopic to it. Since the injection $G\to\D^\infty(R)$ was changed by conjugation in the group, the subgroup $r(\D(R))$ is not normal, and the cokernel of $r$ is only a set. Note that all of our previous examples of groups in $\G$ satisfy the extra condition in (b), except for those involving direct products.

\begin{proof}[Proof of Theorem~\ref{diffeotopy}] We construct our manifolds $R$ as end sums. For a fixed $G$ and $\Gamma$, identify $\Sigma$ with $\Z^+$. We wish to find an increasing sequence $(t_n)$ such that when each ray $\gamma_n$ is restricted to $[t_n,\infty)$, we obtain a (proper) multiray with an equivariant tubular neighborhood map, whose restrictions over each $\gamma_n$ we denote by $\varphi_n\co[t_n,\infty)\times\R^3\to \R^4$. (Here, equivariant means $g\circ\varphi_n(x)=\varphi_{g(n)}(x)$ whenever both sides are defined.) We construct the sequence by induction on $n$, with $t_1=0$: Given $n\ge1$, suppose that for each $i\le n-1$ we have already equivariantly defined each $\varphi_i|[t_i,t_n]\times\R^3$ so that its image lies in a compact set $K_{in}$ disjoint from each other $K_{jn}$, and from each $\gamma_j((t_n,\infty))$ with $j\le n$. Choose $t_{n+1}\ge t_n+1$ so that $\gamma_{n+1}([t_{n+1},\infty))$ avoids each $K_{in}$ and the ball of radius $n$. Then define $\varphi_i|[t_n,t_{n+1}]$ for each $i\le n$ so that the induction hypotheses are extended to $n+1$, completing the induction. Now the conclusions of the theorem follow easily from Section~\ref{Review}, except for injection into the diffeotopy groups: If we end sum $\R^4$ with a copy of a fixed exotic $\R^4$ along each $\varphi_n$, the resulting exotic $\R^4$ inherits a $G$-action. This sum $R$ is independent of choice of $G\in\G_+$ since infinite end sums respecting orientation are well-defined. Using $R_S$ from  Section~\ref{Review} as the model exotic summand gives the Stein condition of (b). (Shrink the model $R_S$ slightly if necessary to allow ambient end sums.) For (a), the only complication is that $G$ need not preserve orientation, so when it does preserve orientation we take our model summand to be $R_S\natural\overline R_S$ to keep the manifold independent of $G$. For (c), we use $R_S\natural R_L$ and note that there is no orientation-reversing diffeomorphism, since every compact subset of $R$ embeds in some $\#n\blow$ but $\overline K_L$ does not. In each case, we can choose $R$ from among uncountably many diffeomorphism types by equivariantly varying the model summand as in Lemma~\ref{doubly}, with (b) of that lemma preventing orientation-reversing diffeomorphisms in the Stein case.

To show that $G$ injects into $\D^\infty(R)$ in each case, suppose that some nontrivial $g\in G$ maps to the identity. Since $G$ acts $\Gamma$-compatibly, it acts effectively on $\Sigma$. Choose $s\in \Sigma$ with $g(s)\ne s$, and identify the corresponding copy of $R_S$ with the originally defined $R_S\subset X$ from the h-cobordism construction. In Case (a), every other summand of $R$ embeds in the identity element $\R^4$, so we obtain an embedding $R\subset R_S\natural_\infty\R^4\approx R_S\subset X$ (Figure~\ref{daisy}). Then $R$ contains the subset $K_S\subset R_S$ on whose complement the involution $\tau$ is defined. Since $g$ maps to the identity in $\D^\infty(R)$, it is ambiently isotopic rel $K_S$ to a diffeomorphism $g'$ of $R$ that is the identity near infinity. Then $g'$ extends by the identity to a diffeomorphism of $X$ restricting to $g$ on $K_S$, showing that $X_\tau$ is also obtained from $X$ by the corresponding twist on $g(K_S)$. But the corresponding copy $g(R_S)$ lies in an $S^4$-summand that is unchanged by the twist, so we obtain the contradiction that $X_\tau\approx X$. The argument is similar for Cases (b) and (c): For the latter, the map $g'$ is the identity on $R$ outside some compact subset $K$, and this subset avoids all but finitely many copies of $R_L$. Throwing away the summands disjoint from $K$, we obtain an exotic $\R^4$ that embeds into a finite blowup of $X$, and the same reasoning applies. For (b), each summand embeds rel $K_S$ in an infinite blowup of $R_S$, embedding $R$ in a blown up infinite end sum of copies of $R_S$. Extend $g'$ over the latter by the identity, blow down all exceptional spheres outside the support of $g'$, and proceed as before.
\end{proof}

\begin{figure}
\labellist
\small\hair 2pt
\pinlabel $X$ at 28 111
\pinlabel $R_S$ at 69 153
\pinlabel $K_S$ at 128 112
\pinlabel $g$ at 133 21
\pinlabel $\R^4$ at 219 112
\pinlabel $\R^4$ at 295 157
\endlabellist
\centering
\includegraphics{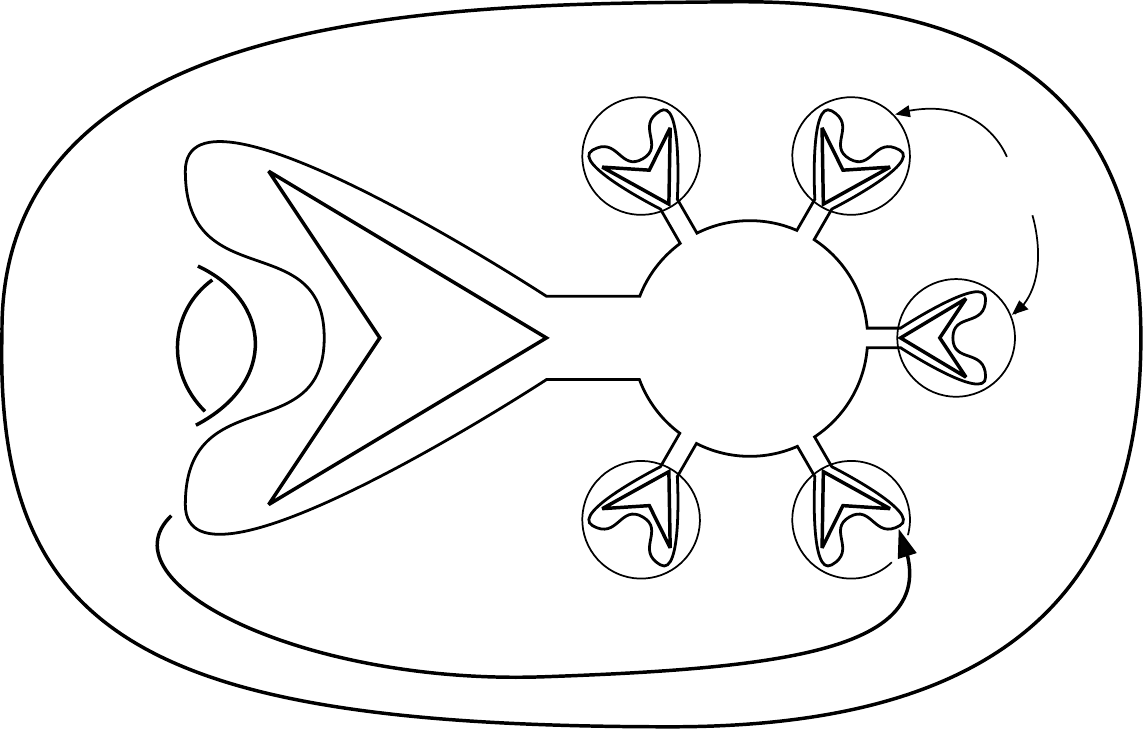}
\caption{Embedding $R$ in $X$.}
\label{daisy}
\end{figure}

\begin{proof}[Proof of Theorem~\ref{homeo}] 
We use a variation of a trick in \cite{AKMR} for constructing $G$-corks (cf.\ proof of our Corollary~\ref{akmr}), which also inspired the previous proof. By Proposition~\ref{Schoen}, the involution $\tau$ on the small model summand of $R$ extends to a self-homeomorphism of it. For a fixed $s\in \Sigma$, let $h$ be this homeomorphism on the corresponding  small summand of $R$, extended diffeomorphically over $R$ as the identity on the rest of $R$ (after a smooth isotopy rel $K_S$ to fit the pieces together). For $G$ as given, we can modify the $G$-action on $R$ by a diffeomorphism permuting summands, so that $s$ has trivial stabilizer. Suppose there is a nontrivial $g\in G$ whose conjugate $h^{-1}\circ g\circ h$ maps into $r(\D(R))$. Then $h^{-1}\circ g\circ h$  agrees with a diffeomorphism $f\co R\to R$ outside some compact subset $K$. In Case (a), we embed $R$ in $X$ as before and note that twisting by $h$ gives $X_h=X_\tau$. Since $f\circ g^{-1}$ is a self-diffeomorphism of $R$, we have diffeomorphisms $X_h\approx X_{h\circ f\circ g^{-1}}= X_{g\circ h\circ g^{-1}}$. This last twist is given by $\tau$ on the summand indexed by $g(s)$, which lies in an $S^4$-summand of $X$. Thus, it does not change the diffeomorphism type of $X$, and we again have the contradiction $X_\tau\approx X$. For Case (c), let $R'\subset R$ be obtained by deleting all large summands except for the finite collection intersecting the compact set $g(K)$. Then $R'$ has the same preimage under each of the three maps $g$, $h^{-1}\circ g\circ h$ and $f$. (For the first pair this is because $h$ preserves each summand, while the second pair agree outside the compact set $K$.) Embed $R'$ in a finite blowup of $X$ and repeat the previous argument. For (b), the same method works once we replace the model small summand of $R$ by a $\tau$-invariant open subset of it containing $K_S$, with compact closure, and large enough that the substitute for $R'$ contains $g(K)$.
\end{proof}

Now suppose that $G\in\G$ acts on $\R^4$ isometrically with respect to some complete Riemannian metric, and that the corresponding $\Gamma\co \Sigma\times [0,\infty)\to\R^4$ is a multiray (i.e.\ proper). Many countably infinite groups satisfy these conditions, for example, countably generated free groups and properly discontinuous isometry groups of Euclidean or hyperbolic 3-space (Examples~\ref{groups}(b) and (c), respectively).

\begin{thm}\label{isom} For $G\in\G_+$ acting isometrically as above, every exotic $\R^4$ constructed in Theorem~\ref{diffeotopy} admits a complete metric on which $G$ acts isometrically. The action injects into $\D(R)$ and $\D^\infty(R)$ but is topologically equivalent to the original $G$-action on $\R^4$. The same holds for nonorientable actions in Case (a) of the theorem.
\end{thm}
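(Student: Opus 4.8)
The plan is to transport both the model exotic summand and the metric across the $G$-orbits of the end-summing rays, exploiting the isometric action on the base to obtain an equivariant neck structure for free; injectivity and topological equivalence will then follow from Theorem~\ref{diffeotopy} and a $G$-equivariant homeomorphism. Let $g_0$ be the given complete $G$-invariant metric on $\R^4$, and recall that $R$ is the end sum of $(\R^4,g_0)$ with copies $E_s$ of the model exotic summand along the multiray $\Gamma$. The first step is to fix an equivariant tubular neighborhood of $\Gamma$. Since $G$ acts by isometries and $g\circ\gamma_s=\gamma_{g(s)}$, the normal exponential map of the multiray commutes with $G$, so it is automatically equivariant; choosing a $G$-invariant radius function (possible because the distances $d_{g_0}(\gamma_s(t),\gamma_{s'}(t'))$ are $G$-invariant and each ray-stabilizer fixes a neighborhood of its ray pointwise) yields disjoint embedded tubes permuted by $G$, inside which I would perform the end sum past a $G$-invariant cutoff $T_s=T_{g(s)}$.

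The second step is the metric itself, and its construction rests on one structural observation inherited from the proof of Theorem~\ref{diffeotopy}: the stabilizer of each $s$ acts as the identity on the summand $E_s$, because it fixes the attaching tube pointwise and the summands are literally identical copies glued there. I would therefore fix a single complete metric $h$ on the model summand $E$, arranged to be a product matching $g_0$ along the attaching neck, and then transport $h$ to every $E_s$ by the group action. Triviality of the stabilizer action makes this transport independent of the chosen group element, so the resulting metric on $\bigsqcup_s E_s$ is $G$-invariant. After a similar equivariant deformation of $g_0$ to an exact product on the tubes for $t\ge T_s$ (again defined on orbit representatives and transported), the base and summand metrics agree on the necks and glue to a smooth $G$-invariant metric on $R$; by construction $G$ acts isometrically.

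Third, completeness and injectivity. A divergent sequence in $R$ either escapes in the base, or into a single summand (each carrying a complete metric by choice of $h$), or else meets the necks $N_s$ for infinitely many $s$. The last case is excluded by properness of $\Gamma$: the neck points $\gamma_s(T_s)$ leave every $g_0$-compact set, so their $g_0$-distance to a fixed basepoint tends to infinity by Hopf--Rinow, and a finite-length path can reach only finitely many necks. Hence the metric is complete. Injectivity of the two homomorphisms $G\to\D(R)$ and $G\to\D^\infty(R)$ needs no new argument: the underlying smooth $G$-action is precisely the one shown to inject in Theorem~\ref{diffeotopy}.

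Finally, topological equivalence. Each $E_s$ is homeomorphic to $\R^4$ rel its standard attaching end (Proposition~\ref{Schoen}), so I may choose a homeomorphism $E_s\to\R^4$ equal to the identity on the neck and, invoking stabilizer-triviality once more, transport this single choice equivariantly; the resulting maps glue to a $G$-equivariant homeomorphism from $R$ onto the end sum of standard $\R^4$'s, which in turn is $G$-equivariantly homeomorphic to the original $(\R^4,g_0)$ with its $G$-action. The nonorientable Case~(a) requires no change: an orientation-reversing $g$ may still be an isometry and merely transports metric data between summands, while the model summand $R_S\natural\overline{R_S}$ already carries the symmetry needed to absorb it. I expect the main obstacle to be reconciling $G$-invariance with completeness across the infinitely many summands at once --- concretely, verifying that stabilizer-triviality makes every equivariant transport (of $h$, of the product deformation, and of the homeomorphisms to $\R^4$) well defined, and that properness of $\Gamma$ really does prevent finite-length escape through infinitely many necks.
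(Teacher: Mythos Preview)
Your approach is essentially the same as the paper's: both exploit the isometric $G$-action to obtain equivariant disjoint tubular neighborhoods of the rays, transport a fixed summand metric across $G$-orbits using triviality of the stabilizer action on each summand, and then verify completeness. The paper's proof is terser---it simply asserts that completeness is preserved ``if we suitably scale the metric near the end of $R$ along each neck where the metrics were merged,'' whereas you argue completeness directly from properness of $\Gamma$ via a trichotomy on divergent sequences. Your route works, but note that the first branch of your trichotomy (``escapes in the base'') is not quite covered by completeness of $g_0$ alone, since $g_0$ has been modified on the tubes; a sequence running out along a single ray enters the summand region where completeness of $h$ takes over, so this case folds into your second branch. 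The paper's scaling device sidesteps this bookkeeping. You also supply the argument for topological equivalence to the original $G$-action on $\R^4$, which the paper leaves implicit.
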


\begin{proof} The multiray is a homeomorphism onto its image, so each ray in $\Gamma$ has a neighborhood in $\R^4$ disjoint from the other rays. For each $s\in \Sigma$, let $U_s$ be a neighborhood of $\gamma_s([0,\infty))$ whose points are within distance 1 of $\gamma_s([0,\infty))$ and closer to it than to any other ray. These neighborhoods are disjoint, cannot cluster (since $\Gamma$ is proper) and can be chosen equivariantly (since $G$ acts by isometries). We can now perform the infinite end sum of Theorem~\ref{diffeotopy} equivariantly, using a tubular neighborhood of each ray $\gamma_s$ with closure in $U_s$. Equivariantly modify the metric in each $U_s$ to merge it with a fixed complete metric on the corresponding exotic $\R^4$ summand. Completeness is preserved if we suitably scale the metric near the end of $R$ along each neck where the metrics were merged.
\end{proof}

This theorem contrasts with Taylor's result that isometry groups of full $\R^4$-homeomorphs must be finite (Theorem~\ref{Taylor}). Note that Theorem~\ref{full}(c) already implies these $\R^4$-homeomorphs cannot be full. Our same techniques also give a contrasting result about full $\R^4$-homeomorphs: 

\begin{thm}\label{fulldiff} There is a doubly uncountable family (as in Lemma~\ref{doubly}(a)) of full $\R^4$-homeomorphs $R$ with uncountable diffeotopy groups $\D(R)$ and $\D^\infty(R)$.
\end{thm}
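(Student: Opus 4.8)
The plan is to merge the doubly uncountable construction of Lemma~\ref{doubly}(a) with the equivariant end-sum machinery of Theorem~\ref{diffeotopy}, exploiting the fact that fullness only constrains the \emph{large} summands. The tension to resolve is the following. By Theorem~\ref{full}(c), no infinite end sum of copies of a fixed manifold can be full, yet the uncountable diffeotopy of Theorem~\ref{diffeotopy} was produced precisely by letting an uncountable group $G$ permute infinitely many identical exotic summands. The resolution is to let $G$ permute infinitely many \emph{small} summands (copies of $R_S$, which contribute nothing to the Taylor invariant), while providing fullness through a separate, \emph{finite} collection of large summands. Since every full $\R^4$-homeomorph is automatically large, this is consistent with the target family being large.

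Concretely, I would fix an uncountable $G\in\G^*$ (for instance $\Q^\omega$) together with its $\Gamma$-compatible action on $\R^4$ and a $G$-fixed neighborhood of a ray disjoint from the image of $\Gamma$. Choosing radial families $\{R_r\}\subset R_S$ (with $K_S\subset R_{r_0}$) and $\{R^*_t\}\subset R_L$ (with $K_L\subset R^*_{t_0}$) and fixing $n=1$, I would build $R_{r,t}$ as the $G$-equivariant infinite end sum whose summands are: (i) infinitely many copies of $R_S$ summed along the multiray $\Gamma$ and permuted by $G$, serving as the ``countable collection of small $\R^4$-homeomorphs'' of Lemma~\ref{doubly}; and (ii) one copy of $R_r$ together with the single copy of $R^*_t$, summed along rays inside the $G$-fixed neighborhood so that $G$ fixes them. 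This is exactly the manifold of Lemma~\ref{doubly}(a) with its small collection chosen equivariantly, so $\{R_{r,t}\}$ is doubly uncountable: for fixed $t$, Lemma~\ref{radial} yields uncountably many diffeomorphism types (the permuted copies of $R_S$ and the single $R^*_t\subset R_L\subset\blow$ all have compact subsets embedding in connected sums of $\blow$, hence are absorbed into the family $V_r$ of that lemma), and since adjoining small summands leaves a compact equivalence class unchanged, varying $t$ realizes uncountably many classes as in the proof of Lemma~\ref{doubly}(a).

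Fullness is then immediate from Theorem~\ref{full}(b): $R_{r,t}$ contains $K_L\subset R^*_t$, and by subadditivity of the Taylor invariant $\gamma(R_{r,t})\le\gamma(R_r)+\gamma(R^*_t)\le\gamma(R_L)<\infty$, the small summands contributing zero. Having finite Taylor invariant and a copy of $K_L$, each $R_{r,t}$ is full.

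The main obstacle, and the step needing the most care, is re-running the diffeotopy injection of Theorem~\ref{diffeotopy} in this setting. The clean argument of case (a)---embedding $R$ into $R_S\subset X$ because every other summand embeds in $\R^4$---fails here, since the large summand $R^*_t$ does not embed in $\R^4$. Instead I would follow the blowup argument of case (c): given a nontrivial $g\in G$ mapping to the identity at infinity, effectiveness of the $G$-action on $\Sigma$ provides $s$ with $g(s)\ne s$; I identify that permuted copy of $R_S$ with the model $R_S\subset X=K3\#\blow$ and note that an isotopy representative $g'$ is supported in a compact set $K$ meeting only finitely many summands. Discarding the summands disjoint from $K$ leaves a finite end sum embedding into a finite blowup $X\#k\blow$, with the chosen $R_S$ in the $X$ factor and the remaining small and large summands in the $\blow$ factors (using $R^*_t\subset R_L\subset\blow$ and that small summands embed in balls). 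Transporting the $\tau$-twist on the chosen summand by $g'$ moves it to the $g(s)$-copy of $R_S$, which lies in a standard factor where twisting by $\tau$ is trivial (as $S^4_\tau\approx S^4$); this identifies $X_\tau\#k\blow$ with $X\#k\blow$. Since $X$ and $X_\tau$ are distinguished by Donaldson invariants that remain distinct after blowups, this is a contradiction, so $G$ injects into $\D^\infty(R_{r,t})$, hence into $\D(R_{r,t})$, for every $(r,t)$. As $G$ is uncountable, both diffeotopy groups are uncountable, completing the theorem.
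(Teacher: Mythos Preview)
Your proposal is correct and follows essentially the same strategy as the paper: take an uncountable $G\in\G^*$, use the $G$-fixed ray to attach a single large summand $R^*_t$ to the small $G$-equivariant end sum, invoke Lemma~\ref{doubly}(a) with $n=1$ for the doubly uncountable family, verify fullness via Theorem~\ref{full}(b), and re-run the injectivity argument of Theorem~\ref{diffeotopy} with blowups to accommodate the large summand.

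Two minor remarks on presentation. First, your construction carries an extra $G$-fixed summand $R_r$ alongside the permuted copies of $R_S$; the paper instead lets the permuted model summand itself vary as $R_r$ (as in the last paragraph of the proof of Theorem~\ref{diffeotopy}). Both fit the hypotheses of Lemma~\ref{doubly}(a), so this is a matter of taste. Second, your injectivity step imitates case~(c) by discarding summands disjoint from the compact support $K$, but since there is only a single large summand $R^*_t$, the discarding is unnecessary: all other summands are small and absorb into $\R^4$, so the entire $R_{r,t}$ already embeds in $X\#\blow$ (at most one blowup). The discarding is harmless, just superfluous here.
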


\begin{proof}
Choose an uncountable group $G\in\G^*$. Theorem~\ref{diffeotopy} gives a small exotic $\R^4$ with a $G$-action fixing a neighborhood of some ray. Use this ray to sum with a single copy of $R_L$. Lemma~\ref{doubly}(a) extends the result to a doubly uncountable family, and $G$ injects into each diffeotopy group as in the proof of Theorem~\ref{diffeotopy}.
\end{proof}

\begin{Remark}\label{unitary} As another variation, consider the finite set  $\G_n$ of groups in $\G_+$ for which $\Sigma$ can be chosen to be finite with $n$ elements. If $\G_+$ is replaced by $\G_n$ in Theorem~\ref{diffeotopy}(c), the same conclusions hold with full $\R^4$-homeomorphs occurring in a doubly uncountable family. The Taylor invariant will be finite but can be chosen arbitrarily large by including sufficiently many summands containing $K_L$. We also obtain uncountably many diffeomorphism types $R$ as in (b) such that for each  subgroup $G$ of ${\rm U}(2)$ with order dividing $n$ there is a $G$-equivariant embedding $R\subset\C^2$ as in (b), where $G$ acts on $\C^2$ unitarily. (Identify the central $\R^4$ of $R$ with the open unit ball in $\C^2$ and sum with an equivariant biholomorphic embedding of $n$ copies of the relevant summand using Eliashberg 1-handles.) We can also adapt (a) by using the analogous finite subset of $\G$. In each case, Theorem~\ref{homeo} applies.
\end{Remark}

Like early techniques for distinguishing $\R^4$-homeomorphs, our methods above are a gateway to analyzing open 4-manifolds in much greater generality. While we do not pursue this systematically here, we state a sample theorem.

\begin{thm}\label{open} For $R_S\subset X$ as in Section~\ref{Review}, suppose a compact topological 4-manifold $Y$ with connected boundary homeomorphically embeds in some blowup of $X$, disjointly from $R_S$. Then there is a smoothing $V$ of $\inter Y$ for which the image of $\D(V)$ in $\D^\infty(V)$ is uncountable and trivially intersects some nonfinitely generated subgroup of $\D^\infty(V)$.
\end{thm}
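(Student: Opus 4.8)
The plan is to reduce this statement to the machinery already assembled for Theorem~\ref{diffeotopy} and Theorem~\ref{homeo}, with $V$ playing the role that the whole end sum $R$ played there, but now carrying the prescribed topological type $\inter Y$. The key observation is that the arguments detecting nontrivial elements of $\D^\infty$ and nontrivial cosets of $r(\D)$ are entirely \emph{local}: they only see a single small exotic summand modeled on $R_S\subset X$, together with the twist $\tau$ and the invariant $K_S$. So all we need is to install infinitely many disjoint copies of (a shrunken, $\tau$-invariant neighborhood of $K_S$ inside) $R_S$ into $\inter Y$ along a multiray, exactly as in the proof of Theorem~\ref{diffeotopy}, while leaving the homeomorphism type $\inter Y$ intact. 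Since $Y$ has connected (hence nonempty) boundary, $\inter Y$ is an open manifold with at least one end, and we may run a proper ray out that end; more generally we choose a multiray $\Gamma\co\Sigma\times[0,\infty)\to\inter Y$ indexed by a countable $\Sigma$, equipped with the shift-type action of a nonfinitely generated group such as $\bigoplus_{\Z}\Z$ (or any $G\in\G^*$ acting on $\Sigma$ with a free orbit), and perform the equivariant infinite end sum of $\inter Y$ with copies of the model summand along $\Gamma$.

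First I would fix the smoothing: $V$ is the result of end-summing a fixed smoothing of $\inter Y$ with infinitely many copies of the small exotic summand (as in the proof of Theorem~\ref{diffeotopy}, using $R_S$ or a $\tau$-invariant open subset of it with compact closure containing $K_S$). End summing with a contractible open manifold homeomorphic to $\R^4$ does not change the homeomorphism type, so $V$ is still a smoothing of $\inter Y$. The hypothesis that $Y$ (after a blowup) embeds homeomorphically in a blowup of $X$ disjointly from $R_S$ is what lets the detection argument go through in the smooth category: we embed $V$ into a (possibly blown up) copy of $X$ so that each installed summand lands in its own $S^4$-summand (or finite-blowup region) carrying the twist $\tau$, exactly as in Figure~\ref{daisy}. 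The uncountability of the image of $\D(V)$ in $\D^\infty(V)$ then follows verbatim from the injectivity argument in the proof of Theorem~\ref{diffeotopy}: if a nontrivial $g\in G$ mapped to the identity in $\D^\infty(V)$, it would be isotopic rel $K_S$ to a diffeomorphism trivial near infinity, which extends over the ambient manifold and shows $X_\tau\approx X$ after a trivial twist in an $S^4$-summand, a contradiction. Choosing $G$ uncountable (e.g.\ by Examples~\ref{groups}) makes the image uncountable.

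For the nonfinitely generated subgroup trivially intersecting $r(\D(V))$, I would import the conjugation trick from the proof of Theorem~\ref{homeo}. Let $h$ be the self-homeomorphism of $V$ obtained by extending the topological extension of $\tau$ over one chosen summand (indexed by some $s\in\Sigma$ with trivial stabilizer) and the identity elsewhere. The conjugated action $g\mapsto h^{-1}\circ g\circ h$ is smooth at infinity, and for the nonfinitely generated subgroup of shifts the argument of Theorem~\ref{homeo} shows that no nontrivial conjugated element lands in $r(\D(V))$: such an element would yield $X_h=X_\tau\approx X_{g\circ h\circ g^{-1}}$, and the latter twist occurs in an unchanged $S^4$-summand, forcing $X_\tau\approx X$. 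The main obstacle, and the only place the specific hypothesis on $Y$ is really used, is ensuring the ambient embeddings into (blowups of) $X$ exist with the installed summands suitably separated from the image of $Y$ and from each other, so that each individual twist can be localized in its own summand; this is precisely what ``$Y$ embeds in a blowup of $X$ disjointly from $R_S$'' is designed to provide. Once that separation is arranged, both the uncountability and the nonfinite-generation statements follow by the already-established localization arguments, so the proof amounts to checking that the bookkeeping of Theorems~\ref{diffeotopy} and~\ref{homeo} carries over with $\inter Y$ in place of the central $\R^4$.
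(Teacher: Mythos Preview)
Your overall strategy---reduce to Theorems~\ref{diffeotopy} and~\ref{homeo} by installing an infinite end sum of small exotic summands and then running the same localization arguments---is exactly right, and matches the paper. But your implementation has a genuine gap in where you place the multiray and hence where the $G$-action lives.

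You put the multiray $\Gamma$ inside $\inter Y$ and then speak of an ``equivariant infinite end sum'' and a ``shift-type action'' permuting the summands. For that to define a $G$-action on $V$ by honest diffeomorphisms, you need diffeomorphisms of (your smoothing of) $\inter Y$ that permute the rays of $\Gamma$ in the prescribed way. Nothing in the hypotheses gives you such diffeomorphisms: $\inter Y$ is an arbitrary open topological $4$-manifold with one end, and there is no reason it admits any nontrivial self-diffeomorphisms at all, let alone ones realizing a prescribed infinite permutation of rays. (The reason this worked in Theorem~\ref{diffeotopy} is precisely that the central piece there was $\R^4$, where $\Gamma$-compatibility is a condition you can arrange.) A related symptom: you want the distinguished summand carrying $K_S$ to ``land on $R_S\subset X$'', but your summands are attached along rays in $\inter Y$, which by hypothesis sits \emph{disjointly} from $R_S$; the picture does not assemble.

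The paper's fix is to keep the multiray and the $G$-action where they already are---on the exotic $\R^4$ called $R$ built in Theorem~\ref{diffeotopy}---and attach $\inter Y$ to $R$ by a \emph{single} end sum along the $G$-fixed ray guaranteed by $G\in\G^*$, extending the action by the identity on $\inter Y$. This is done ambiently in the blowup of $X$: one first shrinks $Y$ off $\cl R_S$, bicollars $\partial Y$, and uses Quinn to smooth $\partial Y$ near a point, so that a smooth arc in the ambient manifold connects $\inter Y$ to $R$ (which is embedded around $R_S$ as in Figure~\ref{daisy}). With that single adjustment, your detection arguments for both the uncountable image and the nonfinitely generated subgroup go through exactly as you wrote them.
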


\begin{proof} Precomposing the embedding by a shrink away from $\partial Y$, we can arrange $Y$ to be disjoint from the closure of $R_S$ and have bicollared boundary. After a further isotopy, we can assume $\partial Y$ is smooth near some point \cite{Q}. We can then ambiently end sum $\inter Y$ with $R$ from our previous proofs, extending the action of a fixed $G\in\G^*$ by the identity on $\inter Y$. The previous proofs then apply.
\end{proof}

As an example, the theorem applies to any $Y$ that topologically embeds in some $\# n\blow$, and the resulting smoothings $V$ realize uncountably many diffeomorphism types by Lemma~\ref{radial}. More general examples can be obtained using more explicit descriptions of $X$. By Theorem~\ref{mainThm} and its addendum, it suffices for $Y$ to avoid an embedding of Akbulut's cork for which twisting changes the Seiberg-Witten invariants (or to avoid any stably effective $G'$-slice cork). Various examples of such embeddings are discussed in Section~\ref{Apps}. The above proof shows that many $G$-actions can be used to exhibit the conclusions of Theorem~\ref{open}. These all arise from actions on $\R^4$. However, more interesting actions on $V$, such as homologically nontrivial actions, can be incorporated by generalizing $\Gamma$-compatibility to smoothings of $\inter Y$.


\section{An example with $r\co\D(R)\to\D^\infty(R)$ surjective}\label{Univ}

Now that we have many $\R^4$-homeomorphs with cokernel $\D^\infty(R)/r(\D(R))$ of maximal cardinality, we ask how small the cokernel can be. Recall that this cokernel is not known to be finite even for the standard $\R^4$ (Theorem~\ref{DR4}). For a better example, we consider $R_U$, the {\em universal $\R^4$} of Freedman and Taylor \cite{FT}, which is characterized by the property that $R\natural R_U\approx R_U$ for every $R$ homeomorphic to $\R^4$.

\begin{thm}\label{univ} Every diffeomorphism of the end of $R_U$ extends over $R_U$. Thus, the restriction $r\co\D(R_U)\to\D^\infty(R_U)$ is surjective.
\end{thm}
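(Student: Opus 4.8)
The plan is to prove the extension statement; surjectivity of $r\co\D(R_U)\to\D^\infty(R_U)$ is then immediate, since every class in $\D^\infty(R_U)$ is represented by some self-diffeomorphism at infinity $f$, and any global extension $F$ of $f$ satisfies $r(F)=[f]$. So let $f\co Y\to Y'$ represent a self-diffeomorphism at infinity, with complementary compact caps $K=R_U\setminus\inter Y$ and $K'=R_U\setminus\inter Y'$ and boundary gluing $\phi=f|\partial Y\co\partial Y\to\partial Y'$. First I would record the reduction. The diffeomorphism $\id_K\cup f$ identifies $R_U=K\cup_{\partial Y}Y$ with the recapped manifold $K\cup_\phi Y'$, while $R_U=K'\cup_{\partial Y'}Y'$ by definition; since the equivalence relation at infinity permits altering $f$ on a compact set, extending $f$ over $R_U$ is equivalent to producing a diffeomorphism $\beta\co K\cup_\phi Y'\to K'\cup_{\partial Y'}Y'$ equal to the identity on $Y'$ near infinity (the composite $F=\beta\circ(\id_K\cup f)$ is then the desired self-diffeomorphism). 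Thus everything reduces to interchanging the two compact caps on the shared neighborhood of infinity $Y'$ by a diffeomorphism that is fixed near infinity.

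The core is then an Eilenberg--Mazur swindle driven by the defining property $R\natural R_U\cong R_U$. The discrepancy between the two caps is a single compact modification $P$ of $R_U$, known a priori to preserve the global diffeomorphism type (both recappings are $\cong R_U$). Because $Y'$ is a neighborhood of infinity of the universal, hence absorbing, $R_U$, and $R_U$ is simply connected at infinity, I can choose a multiray running out the end of $Y'$ and string infinitely many disjoint copies of the support of $P$ along it. Inserting $P$ at all of these sites does not change the diffeomorphism type, by universality, and a shift along the multiray compares ``$P$ at every site'' with ``$P$ at every site but the first,'' telescoping away all but the outermost modification; this is the standard swindle that forces the single modification to be absorbable, and I would carry out the bookkeeping so that the resulting diffeomorphism realizes $\beta$. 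Orientation-reversing $f$ reduce to the orientation-preserving case: since $\overline{R_U}$ is again universal and the universal smoothing is unique, $R_U$ admits a global orientation-reversing self-diffeomorphism, with which one precomposes.

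The step I expect to be the main obstacle is making the swindle rel a neighborhood of infinity rather than merely up to global diffeomorphism. The naive shift swindle only reproves that both recappings are abstractly diffeomorphic to $R_U$---information we already have---whereas $\beta$ must be the identity near infinity. The work lies in nesting the infinitely many absorption sites properly out the end (so they cluster only at infinity and the construction is smooth and proper) and in arranging the telescoping so that its net effect is supported in a compact part of $Y'$, using crucially that the two recappings share the germ of $Y'$ at infinity from the outset. Controlling this ``rel infinity'' version of the absorption, rather than the easy global version, is the heart of the argument.
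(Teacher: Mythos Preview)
Your reduction in the first paragraph is correct, and you have correctly identified the crux: one needs a diffeomorphism between the two recappings that is the \emph{identity near infinity}, not merely an abstract diffeomorphism. But the swindle you propose does not supply this, and you essentially say so yourself in your final paragraph without offering a mechanism to close the gap.

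The Eilenberg--Mazur swindle inherently produces a \emph{shift} diffeomorphism, which moves every site to the next one and hence is nontrivial arbitrarily far out the end. Regrouping an infinite string $PP^{-1}PP^{-1}\cdots$ into $(PP^{-1})(PP^{-1})\cdots$ versus $P(P^{-1}P)(P^{-1}P)\cdots$ only works rel infinity if each adjacent pair cancels by a \emph{compactly supported} diffeomorphism---but that is exactly the statement you are trying to prove for a single $P$. There is also a more basic issue: your ``modification $P$'' is the operation of swapping one specific compact cap for another at the end of $Y'$; it is not clear how to transplant this operation to an arbitrary site $S_i$ deep inside $Y'$, nor why universality (which concerns end sums with $\R^4$-homeomorphs) governs such a transplanted operation. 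So as written, the swindle at best reproves that both recappings are abstractly $R_U$, which you already knew.

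The paper's proof takes a completely different route and does not use a swindle. It extends the given end-diffeomorphism to a self-homeomorphism of $R_U$ (via the topological Schoenflies theorem), uses this to build a smooth $5$-dimensional h-cobordism $W$ from $R_U$ to itself that is already a smooth product near infinity, and then trivializes $W$ rel infinity. The trivialization uses Casson's handle-reduction to $2$- and $3$-handles and then the Freedman--Taylor characterization of $R_U$: the handles cancel once one finds certain link slice solutions in the middle level, and these exist in an end summand of $R_U\cong\natural_\infty R_U$ disjoint from the compact region where the cobordism is nontrivial. The resulting product structure agrees with the given one near infinity, yielding the desired extension. The key point is that ``universality'' here is invoked not as the absorption property $R\natural R_U\cong R_U$ but as the underlying \emph{reason} for it---the presence of all link slice solutions---which is precisely what trivializes the h-cobordism rel infinity.
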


\begin{proof}
Given a diffeomorphism of the end of $R_U$, Proposition~\ref{Schoen} extends it to a homeomorphism $f\co R_U\to R_U$. We define a 5-manifold $W$ as a certain smoothing of $I\times \R^4$: First identify the latter with $I\times R_U$. Then change its smoothing near $\{1\}\times\R^4$ by pushing the previous smoothing forward through the homeomorphism $\id_I\times f$. This new smoothing agrees with the old one near infinity. There is a compact region $K\subset\inter W$ on which the smoothings disagree, but we can assume $H^4(\inter W,\inter W-K;\Z_2)=0$, so there is no obstruction to extending the smoothing over all of $W$ \cite{KS}. Now $W$ is a smooth h-cobordism from $R_U$ to itself, with a smooth product structure given near infinity and $\partial W$. The projection to $I$ extends over $W$ as a Morse function with finitely many critical points. As in Casson \cite{C} (also see, e.g., \cite{FQ}, \cite{GS}), we can simplify the corresponding handle decomposition so that it only has 2- and 3-handles, and their belt and attaching spheres in the middle level are algebraically paired, with Casson handles where we need Whitney disks. According to Freedman and Taylor \cite{FT}, we can smoothly cancel the handles provided that we can find certain smooth ``link slice solutions" in the middle level, avoiding the compact region $K'$ containing the belt and attaching spheres and Casson handles. They construct $R_U$ so that it contains the required solutions. We can describe $R_U$ as an infinite end sum of copies of $R_U$, so some summand is disjoint from $K\cup K'$ and contains the required slice solutions. The resulting handle cancellation occurs in a compact region in $\inter W$, so we obtain an identification of $W$ with $I\times R_U$ that agrees with our original product structure near infinity. By construction, $f$ smoothly identifies $R_U$ with $\partial_+ W$. Projecting to $\partial_- W$ gives our diffeomorphism agreeing with $f$ near infinity.
\end{proof}

\begin{prob} Describe the diffeotopy group $\D(R_U)$ of the universal $\R^4$.
\end{prob}

\noindent Note that there is an orientation-reversing involution since $R_U\approx R_U\natural\overline R_U$. If  $\D_+(R_U)$ can be shown to be trivial, we will have exhibited an $\R^4$-homeomorph with $\D_+^\infty(R)$ trivial without having to prove the smooth Schoenflies conjecture.


\section{Corks and $\R^4$-homeomorphs}\label{Corks}

Under broad hypotheses, corks and $\R^4$-homeomorphs can be used interchangeably (up to blowups) for modifying smooth structures on 4-manifolds, and the $\R^4$-homeomorphs can be chosen from doubly uncountable families. We now present this as a theorem, then give applications in Section~\ref{Apps}. Each exotic $\R^4$ arising from Casson's h-cobordism construction is of a special form, called a {\em ribbon $\R^4$} by Freedman \cite{DF}. We slightly generalize the notion and transfer it to corks. The main idea is that a ribbon (or slice) complement in $B^4$ can be turned back into $B^4$ by adding 2-handles to meridians. If we instead attach exotic open 2-handles, the result will still have interior homeomorphic to $\R^4$, but not necessarily diffeomorphic to it. We will usually take the  exotic open 2-handles to be Casson handles as in Casson's original paper \cite{C}, but the generalized Casson handles discussed in \cite{DF} could just as easily be used.

To construct our $\R^4$-homeomorphs and corks, we begin with the union $D=D_1\sqcup\cdots\sqcup D_k\subset B^4$ of an ordered collection of $k$ disjoint, smoothly embedded disks in the 4-ball, so that $D\cap\partial B^4=\partial D=L=L_1\sqcup\cdots\sqcup L_k$ is exhibited as a slice link in $S^3$. Let $E(D)$ denote the (compact) exterior of $D$ in $B^4$, the complement of a tubular neighborhood of $D$. Let $E'(D)\subset E(D)$ be a diffeomorphic copy of $E(D)$ obtained by removing a boundary collar of the latter. For any ordered $k$-tuple $m=(m_1,\cdots,m_k)$ of integers, let $C(D,m)$ be the compact 4-manifold obtained from $E(D)$ by attaching a 2-handle along an $m_i$-framed meridian of each $L_i$. Let $R(D)$ denote an open 4-manifold obtained from $E(D)$ by attaching a Casson handle to a 0-framed meridian of each $L_i$ and deleting the remaining boundary. (In general, this depends on the choices of Casson handles, but these are suppressed from the notation.) When $m=0\in\Z^k$, the manifold $C(D,0)$ is diffeomorphic to $B^4$. Changing the framings does not change the homotopy type, so every $C(D,m)$ is contractible, with boundary the homology sphere obtained from $S^3$ by $-1/m_i$-surgery on each $L_i$. Since every Casson handle is homeomorphic to a 2-handle \cite{F}, every $R(D)$ is homeomorphic (but not necessarily diffeomorphic) to $\R^4$. When $D$ is obtained from an embedded collection of disks in $S^3$ by pushing their interiors into $\inter B^4$, each $C(D,m)$ or $R(D)$ is diffeomorphic to $B^4$ or $\R^4$, respectively, the latter because every Casson handle interior is diffeomorphic to $\R^4$ \cite{F}. (For $R(D)$, it suffices to assume that $L$ is a trivial link \cite[p.\ 464]{BG}.) However, our small exotic $\R^4$-homeomorph $R_S$ from Section~\ref{Review} can be realized as $R(\Delta)$ for the ribbon disk $\Delta$ shown in Figure~\ref{pretzel}, with $K_S=E'(\Delta)$, and uncountably many diffeomorphism types can be realized for $R_S=R(\Delta)$ by varying the choice of Casson handle \cite{BG}. The canonical embedding $R_S\subset S^4$ can be seen directly. In fact, every $R(D)$ canonically embeds in $\R^4$ since every Casson handle canonically embeds in a standard 2-handle \cite{C}. Correspondingly, every $C(D,m)$ with $D$ ribbon canonically embeds in $S^4$. (The double of $C(D,m)$ is $S^4$ since it is obtained from $S^4$ by Gluck twisting some components of the 2-link made by doubling $D$. Such a manifold must be diffeomorphic to $S^4$ by \cite[Exercise~6.2.11(b) and solution]{GS}.)

\begin{figure}
\labellist
\small\hair 2pt
\pinlabel {ribbon move} at -25 73
\endlabellist
\centering
\includegraphics{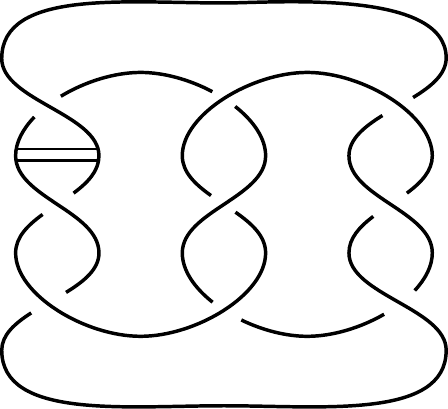}
\caption{The $(-3,3,-3)$-pretzel knot with ribbon move exhibiting the disk $\Delta$.}
\label{pretzel}
\end{figure}

For $D$ as above, let $G$ be a group acting on the pair $(S^3,L)$, preserving the orientation of $S^3$. In general, the action permutes the components of $L$, and hence their indices. Call the action {\em $L$-finite} if the stabilizer of each $L_i$ acts on the circle $L_i$ by factoring through the trivial action or a reflectional $\Z_2$, and factors through a finite action on an $S^1\times D^2$-neighborhood of $L_i$. (In the applications, this action will be trivial or $\Z_2$ reflecting each factor of $S^1\times D^2$.) Such an action canonically extends over $\partial E(D)$ (0-surgery on $L$), although not necessarily over $E(D)$. From $\partial E(D)$, it extends over the 2-handles forming $C(D,m)$ whenever the $k$-tuple $m$ is invariant under the $G$-action. Similarly, we can construct $R(D)$ with an equivariant collection of Casson handles over which the action extends. (Let the attaching regions be the surgery solid tori, and arrange the first stage double points of each fixed sign to be permuted by the stabilizer of each $L_i$.) It is straightforward to further refine the Casson handles equivariantly. In either setting, we obtain an induced $G$-action on the complement of $\inter E'(D)$ that need not extend over $E'(D)$. This clearly restricts to an action on $\partial C(D,m)$ or an action at infinity on $R(D)$.

\begin{de}\label{slice} A pair of the form $(C(D,m),G)$ as above will be called a {\em $G$-slice cork} (or {\em slice $G$-cork}) if for the induced action of $G$ on $\partial C(D,m)$, no element of $G$ other than the identity extends to a diffeomorphism of $C(D,m)$. A pair of the form $(R(D),G)$ as above will be called a {\em $G$-slice $\R^4$} if the corresponding statement holds for the induced $G$-action at infinity, i.e., $G$ injects into the cokernel $\D^\infty(R(D))/r(\D(R(D)))$. We may substitute the word ``ribbon" for ``slice" when $D$ is ribbon (a union of disks whose interiors lack local maxima of the radial function on $B^4$).
\end{de}

\noindent Without the slice condition, $G$-corks were defined in \cite{AKMR} (as contractible manifolds, with boundary $G$-actions satisfying the above nonextendability condition). Casson's h-cobordism construction \cite{C} always yields a $\Z_2$-ribbon $\R^4$. By Proposition~\ref{Schoen}(a,b), the standard $\R^4$ cannot be $G$-slice for a nontrivial group $G$ (and an appropriate $D$) unless the smooth Schoenflies Conjecture is false. Every $G$-action at infinity constructed above on $R(D)$ extends as a {\em topological} $G$-action: By construction, it is homeomorphic to the original $G$-action on $S^3$, extended in the obvious way to a collar of the end of $\R^4$, so it can be extended by coning.

\begin{example}\label{tau} The pretzel knot in Figure~\ref{pretzel} has an obvious $\Z_2\oplus\Z_2$-action whose nontrivial elements $\tau_x$, $\tau_y$ and $\tau_z$ are $\pi$-rotations about the three standard coordinate axes (so $\tau_x$ rotates in the plane of the paper). This is not $L$-finite since $\tau_y$ has no fixed points on the knot, but the two subgroups generated by $\tau_x$ and $\tau_z$ are $L$-finite. (The action is closely related to the $L$-finite $\Z_2\oplus\Z_2$-action on the $(-3,3,-3,3)$-pretzel link that is used to construct the peculiar $\Z_2\oplus\Z_2$-actions of \cite{menag}.) Clearly, $\tau_y$ preserves the ribbon move, so extends over the pair $(B^4,\Delta)$. However, $\tau_x$ and $\tau_z$ cannot extend. In fact, Lemma~\ref{cork} shows that $\tau_x$ induces the usual nonextendable involutions ($\Z_2$-ribbon structures) on the boundary of Akbulut's cork $C(\Delta,-1)$ and the end of $R_S=R(\Delta)$. Since $\tau_z=\tau_x\circ\tau_y$, it also cannot extend. Furthermore, for any fixed embedding of one of these manifolds into $X$, the manifolds $X_{\tau_x}$ and $X_{\tau_z}$ made by regluing will be diffeomorphic to each other, so the two nontrivial involutions can be used interchangeably.
\end{example}

If a manifold $V$ is compact with a $G$-action on its boundary, or open with a $G$-action at infinity, any codimension-zero embedding of $V$ into a manifold $X$ results in a family $\{X_g\st  g\in G\}$ of manifolds obtained by regluing as in Section~\ref{Infty}. Generalizing \cite{AKMR}, we will call an embedding {\em $G$-effective} if for distinct $f,g\in G$, the manifolds $X_f$ and $X_g$ are never diffeomorphic. A pair of the form $(C(D,m),G)$ or $(R(D),G)$ as above that admits a $G$-effective embedding must be a $G$-slice cork or $G$-slice $\R^4$. We need a slightly stronger version of effectiveness:

\begin{de} A $G$-effective embedding $V\subset X$ will be called {\em \blowst $G$-effective} if it remains $G$-effective after $X$ is blown up any (finite) number of times in the complement of $V$.
\end{de}

\noindent Since Donaldson's invariants and their descendants lose no information under blowing up, we lose no generality in practice by assuming stability, provided that we are careful with orientations: When the embedding $V\subset X$ is $G$-effective, so is $\overline V\subset\overline X$. However, $\overline V$ may not admit a stably $G$-effective embedding into any manifold, even if the original embedding $V\subset X$ is stably $G$-effective (Proposition~\ref{flip}).

The main conclusion of this section is that up to blowups, stably $G$-effective embeddings of $G$-slice corks and of $\R^4$-homeomorphs can always be found inside each other, and the resulting set of diffeomorphism types of $\R^4$-homeomorphs has the maximal size discussed in Section~\ref{Review}. Let $D\sqcup n\Delta$ denote the collection of disks in $B^4$ made from a given $D$ by adding $n$ distant copies of the disk $\Delta$ from Figure~\ref{pretzel}, or equivalently, by forming the boundary sum of $(B^4,D)$ with $n$ copies of $(B^4,\Delta)$. Then $E'(D)$ canonically embeds in $E'(D\sqcup n\Delta)$. We write $n\Delta$ when $D$ is empty. Given embeddings $E\subset V_i$ ($i=1,2$) and $G$-actions on $(V_i-\inter E,\partial E)$, we call an embedding $V_1\to V_2$ {\em equivariant along $\partial E$} (resp.\ {\em equivariant except on $E$}) if it restricts to the identity on $E$ and restricts equivariantly to $\partial E$ (resp.\ $V_1-\inter E$). We use the same terminology for embeddings of $V_1$ obtained after blowing up (resp.\ equivariantly blowing up) $V_2$ outside of $E$, although the action need not extend from $\partial E$ to a nonequivariant blowup of $V_2-\inter E$.

\begin{thm}\label{mainThm} Let $(S^3,L)$ be a $k$-component link with an $L$-finite $G$-action, given as slice by disks $D$.

\item{(a)} For each $G$-invariant $m\in\Z^k$, the manifold $C(D,m)\# \blow$ contains some $R(D)$, embedded equivariantly along $\partial E'(D)$, and the blowup is unneccessary if the entries of $m$ are all even.

\item{(b)} For each equivariant $R(D)$, there is an $n\in\Z^+$ such that uncountably many diffeomorphism types of the form $R(D\sqcup n\Delta)$ embed in $R(D)$, equivariantly except on $E'(D)$.

\item{(c)} For each equivariant $R(D)$, there is a finite equivariant blowup $R(D)\# n\blow$ that contains large, full $\R^4$-homeomorphs, comprising a doubly uncountable family as in Lemma~\ref{doubly}(a), embedded equivariantly except on $E'(D)$.

\item{(d)}  For each equivariant $R(D)$ and $G$-invariant $k$-tuple $m$ of sufficiently negative integers, there is an $l\in\Z^+$ such that $C(D,m)$ embeds in $R(D)\# l\blow$, equivariantly except on $E'(D)$.
\end{thm}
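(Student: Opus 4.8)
The plan is to work one Casson handle at a time and to convert it into an embedded $2$-handle by resolving the double points of its first stage, all of which lie in the Casson-handle region outside $E(D)\supset E'(D)$. Recall that $R(D)$ and $C(D,m)$ share the common piece $E(D)$ and differ only in what is attached to the meridians of the $L_i$: a Casson handle (framing $0$) for $R(D)$, versus a $2$-handle (framing $m_i$) for $C(D,m)$. So it suffices to produce, inside a blow-up of the Casson handle attached to the meridian $\mu_i$ of $L_i$, a smoothly embedded disk $\delta_i'$ bounded by $\mu_i$ whose framing is the prescribed $m_i$. A closed tubular neighborhood of $\delta_i'$ is then the required $2$-handle, and its union with $E(D)$ gives the desired copy of $C(D,m)$.

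To build $\delta_i'$, I would start from the core of the first-stage kinky handle: an immersed disk $\delta_i$ bounded by $\mu_i$, of framing $0$, with a finite collection of transverse double points. Blowing up $R(D)$ at these double points and replacing $\delta_i$ by its proper transform tubed through the exceptional spheres yields an embedded disk bounding $\mu_i$ whose framing has been driven negative, each positively resolved double point lowering it by $2$ via a $\blow$-summand. The framing reached this way is bounded above by some value $f_i$ determined by $R(D)$; to hit an arbitrary $m_i\le f_i$, I would perform additional $\blow$ blow-ups at interior points of the disk and tube, each such standard Kirby move changing the framing by a controlled amount. Since only the first stage of each Casson handle is used and only finitely many blow-ups occur, this produces $C(D,m)\subset R(D)\# l\blow$, with $l$ the total number of blow-ups, for every $G$-invariant $m$ each of whose entries is sufficiently negative.

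Equivariance should be essentially built in, because the Casson handles defining $R(D)$ were chosen equivariantly: $G$ permutes the meridians, and the stabilizer of each $\mu_i$ permutes the first-stage double points of each sign. Hence the full collection of double points, together with the interior blow-up points I add, can be taken $G$-invariant, the blow-ups performed equivariantly so that $G$ extends over $R(D)\# l\blow$ with the exceptional spheres permuted to match. The resolved disks $\delta_i'$ and their $2$-handle neighborhoods are then permuted by $G$ exactly as the meridians are, and since $m$ is $G$-invariant the framings agree across each orbit. Because all of the blow-up activity and the disks $\delta_i'$ lie away from $E(D)\supset E'(D)$, the resulting embedding is the identity on $E'(D)$ and equivariant on the complement, i.e.\ equivariant \emph{except on} $E'(D)$.

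The main obstacle I expect is the framing bookkeeping together with the sign constraint that $R(D)\# l\blow$ only permits negative ($\blow=\overline{\C P^2}$) blow-ups. A positive double point resolves cleanly with a negative blow-up and lowers the framing, but negative first-stage double points must be handled with more care, either arranged away in the equivariant construction or resolved at the cost of a framing increase that the subsequent interior blow-ups then overcome. Driving every $m_i$ down to an arbitrary sufficiently negative integer, and not merely an even one, requires the single-blow-up framing-change moves, and verifying that these can be carried out compatibly with the $G$-action is the one genuinely delicate point; everything else reduces to a routine local handle computation.
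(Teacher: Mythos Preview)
Your proposal addresses only part~(d); parts~(a), (b), and~(c) are left untreated. Part~(a) is the reverse construction---finding Casson handles inside the $2$-handles of $C(D,m)$, after a single blowup to even out the framings when necessary---and uses Casson's embedding theorem applied to immersed disks obtained by tubing the $2$-handle cores into suitable immersed spheres in the simply connected region $C(D,m)-\inter E'(D)$. Parts~(b) and~(c) are obtained by equivariantly end-summing $R(D)$ with copies of $R_S=R(\Delta)$ (and, for~(c), also with $R_L\subset\blow$) along a multiray based on a finite $G$-orbit in $S^3-L$, then invoking Lemma~\ref{doubly}. None of this appears in your outline, and your blow-up-and-resolve mechanism does not produce it.

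For~(d) itself, your approach is the paper's: resolve the first-stage double points of each Casson handle by blowing up, then adjust framings with further blowups. Two corrections to the bookkeeping. First, blowing up a positive double point lowers the homological framing by~$4$, not~$2$: the proper transform picks up $-2[E]$, contributing $4E^2=-4$. Second, blowing up a negative double point leaves both the relative homology class and the framing \emph{unchanged}---the two local sheets contribute $-[E]$ and $+[E]$, which cancel---so there is no ``framing increase'' to overcome and no need to arrange negative double points away. With these fixes your argument for~(d) goes through exactly as in the paper: additional blowups at smooth interior points of the now-embedded disk each lower the framing by~$1$, so every sufficiently negative $G$-invariant $m$ is reached, and the equivariance is as you describe.
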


\begin{adden}\label{main} In each case of the theorem, if the initial cork or $\R^4$-homeomorph has a stably $G$-effective embedding in some $X$, then each resulting manifold has a stably $G$-effective embedding in the corresponding blowup $X^*$. In particular, all of the corks and small $\R^4$-homeomorphs are then $G$-slice, and $G$ injects into $\D^\infty(R)/r(\D(R))$ for each $\R^4$-homeomorph $R$.
\end{adden}

\begin{proof} All of the discussed embeddings in $X^*$ agree equivariantly along $\partial E'(D)$. For each $g\in G$, the resulting manifold $X^*_g$ is made by cutting out $E'(D)$ and regluing by $g$, so it is independent of the embedding under discussion.
\end{proof}

\begin{proof}[Proof of Theorem~\ref{mainThm}] 
 The last three parts follow easily: Since the action on $S^3$ is $L$-finite, there is a point $p$ in $S^3-L$ with finite orbit whose cardinality we denote by $n$, and stabilizer acting trivially on a neighborhood of $p$. For (b), equivariantly end sum $R(D)$ with $n$ copies of $R_S=R(\Delta)\subset\R^4$ using a multiray along the orbit of $p$, and apply Lemma~\ref{doubly}(a) ($n=0$ case) with $R_r$ in one summand. As discussed following that lemma, we can replace the required radial family in $R_S$ by a suitably nested family parametrized by the Cantor set. Such a family of the required form $R(\Delta)$ exists since every Casson handle contains a Cantor set family of suitably nested Casson handles \cite{F}. (Alternatively, if we use generalized Casson handles as in \cite{DF}, we obtain such a Cantor set family within a radial family.) For (c), also sum with $n$ copies of $R_L\subset\blow$. Fullness follows from Theorem~\ref{full}(b). For (d), note that every Casson handle contains a generically immersed disk spanning its attaching circle. We can make this embedded by blowing up its double points. Blowing up a negative double point does not change the relative homology class of the disk or the framing by which the disk is attached (measured homologically as in Casson handle theory), but blowing up a positive double point lowers the framing by 4. Additional blowups lower the framing by any desired amount. Thus, the required cork $C(D,m)$ can be constructed from $E'(D)$ by equivariantly attaching 2-handles whose cores are such disks. 
 
For (a), note that $C(D,m)-\inter E'(D)$ is simply connected, since $\pi_1(\partial E'(D))$ is generated by meridians to $L$, and these are annihilated when the 2-handles of $C(D,m)$ are attached. We wish to change the cores of these handles, fixing their boundaries $\mu_i$, to get an immersed collection of disks inducing the 0-framing on each $\mu_i$ (in the homological sense) and having vanishing intersection numbers with each other. If the framings $m_i$ induced by the original disks are not all even, we obtain evenness by blowing up once and tubing the odd disks into the resulting odd-framed sphere. The meridians $\mu_i$ form a basis for $H_1(\partial E'(D))$, so we can find a dual basis $\{\beta_j\}$ for $H_2(\partial E'(D))$ with $\mu_i\cdot\beta_j=\delta_{ij}$. Since  the complement of $\inter E'(D)$ is simply connected, we can represent the image of this dual basis inside it by immersed spheres. Since the intersection pairing vanishes on the span of these spheres, we can achieve the required form for our disks by suitably tubing them into copies of the spheres. Casson's embedding theorem \cite{C} now produces the required Casson handles. (Alternatively, we can appeal to the Disk Theorem \cite{FQ} and the fact \cite{Q} that every topological 2-handle contains a Casson handle.) After passing to a suitable common refinement of the Casson handles, we can extend the $G$-action over them to obtain the required $(R(D),G)$. 
\end{proof}

\begin{Remarks}\label{mainrem} (a) To construct a  \blowst $G$-effective embedding of a $G$-slice cork or $\R^4$ with a given $D$, it is enough to construct a \blowst $G$-effective embedding $E(D)\subset X$  with $X-\inter E(D)$ simply connected. Then there is a generically immersed collection of disks in $X-\inter E(D)$ with the required meridian boundaries, and the previous methods apply.

\item[(b)] An argument similar to (d) above shows that every compact subset $K$ of $R(D)$ lies in some equivariant $C(D',m)$ inside a blowup of $R(D)$, where $D'$ is an iterated, ramified Whitehead double of $D$. This is because $K$ lies in the union of $E(D)$ with a finite subtower of each Casson handle, so we can blow up the double points at a higher stage of each. It follows that an infinite blowup of $R(D)$ is a nested union of blown up contractible manifolds. In contrast, this statement fails for any large exotic $\R^4$ containing $\overline K_L$, since if the latter were contained in a blowup of a contractible $C$, capping with $\overline C$ would put it in a closed, negative definite manifold. Furthemore, the interior of a blown up contractible manifold with boundary a nontrivial homology sphere cannot be a nested union of blown up $\R^4$-homeomorphs, since the latter must be simply connected at infinity.

\item[(c)] Suppose $E(D)$ admits a Stein structure. Then so does every $C(D,m)$ with sufficiently negative framings $m_i$, so the corks produced by Theorem~\ref{mainThm}(d) can be assumed to be Stein. The $\R^4$-homeomorphs in (a) and (b) can also be assumed to be Stein, after equivariantly refining the Casson handles to have a suitable excess of positive double points, cf.\ \cite{Ann}. If $X$ in the addendum is complex, then the resulting Stein surfaces in each case can be assumed to be biholomorphically embedded in $X^*$ since they are contractible \cite{steindiff}. The first example of such a Stein exotic $\R^4$ \cite{Ann} had the form $R(\Delta)$ as in Example~\ref{tau}. This was proved by showing that $E(\Delta)$ is Stein, and the meridian was controlled so that the Casson handle could be taken to have just one double point (positive) at each stage. That same control implies that $C(\Delta,m)$ is Stein whenever $m<0$. (For $m=-1$ we recover Akbulut's cork, which is well-known to be Stein by a more direct method.)
\end{Remarks}

It remains to verify orientation-sensitivity of stable $G$-effectiveness, as well as to supply the postponed proof of Lemma~\ref{doubly}(b). The hypotheses of the following proposition are satisfied by the various explicit examples of $G$-corks in the literature with their natural orientations  (see Section~\ref{Apps}), as well as by the corresponding small $\R^4$-homeomorphs generated from them by Addendum~\ref{main}, such as $R(\Delta)$.

\begin{prop}\label{flip} Let $(S^3,L)$ be a link with a nontrivial $L$-finite $G$-action, exhibited as slice by disks $D$.

\item[a)] Whenever $m$ has no positive entry $m_i$, $\overline{C(D,m)}$ admits no stably $G$-effective embedding.

\item[b)] If some $R(D)$ admits a stably $G$-effective embedding, then there is another choice of Casson handles for which the resulting $R(D)$ admits such an embedding but its mirror image $\overline{R(D)}$ does not.
\end{prop}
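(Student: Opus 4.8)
The plan is to view (a) and (b) as two aspects of a single orientation-sensitivity phenomenon, and to reduce (b) to (a). For (a) I would first recast \emph{stable} $G$-effectiveness: because a blowup may be performed in a fixed ball disjoint from the cork and connected sum is insensitive to location, one has $(X\#n\blow)_g\cong X_g\#n\blow$ for every $g\in G$ and every embedding $\overline{C(D,m)}\subset X$. Thus $\overline{C(D,m)}$ fails to be stably $G$-effective as soon as one nontrivial $g\in G$ and one $n$ satisfy $X_g\#n\blow\cong X\#n\blow$, and this holds once $g$ extends to a diffeomorphism of $\overline{C(D,m)}\#n\blow$. Since $\overline{C(D,m)}$ is contractible, $g$ already extends to a self-homeomorphism (as in the cut-and-paste discussion of Section~\ref{Infty}), so the genuine content is to smooth this extension after finitely many negative blowups.

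Here the hypothesis that no $m_i$ is positive is decisive: it presents $\overline{C(D,m)}$ as $E(\overline D)$ with $2$-handles of \emph{non-negative} framings $-m_i\ge 0$. Each such handle can be slid over the $-1$-framed exceptional sphere of a $\blow=\overline{\C P^2}$ summand to lower its framing, and after enough negative blowups the handles standardize and the homeomorphic extension of $g$ is carried along to a genuine diffeomorphism of $\overline{C(D,m)}\#n\blow$. I expect this one-sided stabilization to be the main obstacle: the asymmetry of smooth $4$-manifold topology is exactly what forces $\blow$ rather than $\C P^2$, and it is why $C(D,m)$ itself (non-positive framings, standardized only by positive blowups) can remain stably $G$-effective. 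Verifying that the action, and not merely the underlying manifold, survives the slides and cancellations is the delicate point.

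For (b) I would manufacture the required Casson handles and then appeal to (a). Starting from the given stably $G$-effective $R(D)$, Theorem~\ref{mainThm}(d) with Addendum~\ref{main} produces a stably $G$-effective cork $C(D,m)$ with $m$ as negative as desired. Feeding this cork through Theorem~\ref{mainThm}(a) and the Addendum yields a new $R(D)$ — a different choice of Casson handles, say $R'$ — that is again stably $G$-effective; by Remark~\ref{mainrem}(c) I would refine these Casson handles so that each stage carries a single positive double point, so that all double points of $R'$ are positive.

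It then remains to show that $\overline{R'}$ is not stably $G$-effective, and this is the part-(a) mechanism applied at infinity. Reversing orientation turns $R'$ into $R(\overline D)$, whose Casson handles now carry only negative double points. Blowing these up with $\blow$ is framing-neutral (as in the proof of Theorem~\ref{mainThm}(d)), so finitely many negative blowups convert the relevant Casson subtowers into standard $0$-framed $2$-handles on $0$-framed meridians, which cancel to the standard model; the diffeomorphism $g$ at infinity then extends over $\overline{R'}\#n\blow$ exactly as in (a). The main obstacle here is the noncompactness: a representative of the diffeomorphism at infinity reaches into the infinitely deep end of $R(\overline D)$, and one must check that it can be supported in a region standardized by only finitely many negative blowups, so that $n$ stays finite.
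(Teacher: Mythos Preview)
Your approach to (a) is essentially the paper's: reversing orientation makes the framings $-m_i\ge 0$, and blowing up lowers them to zero, giving $C(\overline D,0)\cong B^4$ in which every $g$ extends over $B^4-E'(\overline D)$. The paper phrases this as finding $B^4$ in the blown-up $X$ rather than extending $g$ over $\overline{C(D,m)}\#n\blow$, but the content is the same.

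Your plan for (b), however, has a genuine gap. The step ``refine these Casson handles so that each stage carries a single positive double point'' cannot work: refinement passes to a sub-Casson-handle and can only \emph{add} double points, never remove them. Remark~\ref{mainrem}(c) concerns adding an excess of positive double points for the Stein condition; it does not eliminate existing negative ones. So your detour through Theorem~\ref{mainThm}(d) and (a) still leaves you with Casson handles that may have negative first-stage double points, and you have no mechanism to dispose of them.

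The paper's route is much more direct and avoids the detour entirely. Starting from the given stably $G$-effective $R(D)\subset X$, one simply blows up $X$ equivariantly at the negative \emph{first-stage} double points of the Casson handles. Blowing up a negative double point resolves it without changing the framing, so the proper transforms of the first-stage disks now have only positive double points, and a new $R(D)$ (with these simpler Casson handles) sits stably $G$-effectively in the blowup. Its mirror then has only negative first-stage double points; for any embedding of $\overline{R(D)}$, blowing those up makes the first-stage disks \emph{embedded} with framing zero, so one finds $C(\overline D,0)=B^4$ containing $E'(\overline D)$ exactly as in (a), and $g$ extends. Only finitely many first-stage double points are involved, so the noncompactness issue you flagged never arises: the higher stages of the Casson handles are irrelevant once the first stage has been converted to an honest $2$-handle.
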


\noindent In contrast, a stably $G$-effective, $G$-slice cork or $\R^4$-homeomorph can typically be modified so that it has an orientation-reversing involution, by summing it with its mirror image at a fixed point of the action on $S^3-L$. (Embed the new summand in a ball in the associated closed manifold. Then its twisting does not affect the Seiberg-Witten invariants.) A stably $\Z$-effectively embedded amphichiral $R(D)$ with $D$ a single ribbon disk is given in Corollary~\ref{Zcork}; see Remark~\ref{Zrem}(c).

\begin{proof}
Given an embedding  $\overline{C(D,m)}={C(\overline D,-m)}\subset X$, we can blow up to reduce the framings of the 2-handles, obtaining an embedding of ${C(\overline D,0)}$ with the same restriction to $E(\overline D)$. However, this is just $B^4$ with the standard embedding of $E(\overline D)$. Every $g\in G$ extends over $B^4-E'(\overline D)$. Thus, twisting by $g$ is the same as removing and replacing $B^4$, which preserves the ambient diffeomorphism type.

Given a stably $G$-effective embedding of some $R(D)$, we obtain another such embedding by equivariantly blowing up to eliminate all negative first-stage double points of the Casson handles without disturbing their framings. The mirror image of this new $R(D)$ has only negative double points at the first stage, so the previous argument applies to any embedding of $\overline{R(D)}$.
\end{proof}

\begin{proof}[Proof of Lemma~\ref{doubly}(b)]
We construct uncountably many diffeomorphism types of Stein $\R^4$-homeomorphs of the form $R=\natural_\infty R^+_r$ admitting no orientation-reversing self-diffeomorphisms. Let $\{R_r\}$ be a nested family of the form $R(\Delta)$ in $R_S$, indexed by the Cantor set as in the proof of Theorem~\ref{mainThm}(b). For each $r$, Let $R^+_r$ be the Stein $\R^4$-homeomorph obtained from $R_r$ by simplifying its Casson handle by eliminating all of the negative double points at each stage. For $r<s$ we immediately have embeddings $R_r\subset R_s\subset R^+_s$ restricting to the identity on $K_S=E'(\Delta)$, where the first image has compact closure. It follows that $R^+_s$ cannot be diffeomorphic to $R^+_r$ fixing $K_S$. Otherwise, we would have $R_r\subset R^+_r\subset R_r\#\infty\blow$, with the last embedding obtained by blowing up all negative double points of $R_r$. (The notation refers to the unique manifold obtained by blowing up a closed, discrete, infinite set of points.) Since $\cl R_r$ is compact, we would then have $R_r$ embedded with compact closure rel $K_S$ in a finite blowup of itself. This is the embedding needed for deriving a contradiction from periodic end theory as in \cite[Lemma~1.2]{menag}. We immediately obtain uncountably many diffeomorphism types of manifolds $R^+_r$. The same argument applies after sums with other slice $\R^4$-homeomorphs, since these embed in $\R^4$. In particular, we obtain uncountably many diffeomorphism types of Stein $\R^4$-homeomorphs $R$, each an infinite end sum of copies of a fixed $R^+_r$. 

We show that such an $R$ cannot have an orientation-reversing self-diffeomorphism. In fact, $\overline R\#\infty\blow$ is a nested union of blown up balls (cf.\  previous proof and Remark~\ref{mainrem}(b)). However, $R\#\infty\blow$ cannot be such a union. Otherwise, the embedding $K_S\subset R$ determined by a given end summand $R^+_r$ would, after blowups, factor through an embedding of a blown up ball. By compactness, only finitely many blowups would be needed. But $R$ embeds in $R^+_r$ rel $K_S$ since all the other summands of $R$ embed in $\R^4$, and $R^+_r$ embeds rel $K_S$ in $R_r\#\infty\blow$. Thus, the embedding $K_S\subset R_r$ would also factor through a blown up ball after finitely many blowups. Composing with the defining embeddings $R_r\subset R_S\subset X$, we would conclude that $K_S$ lies in a negative definite connected summand of some blowup $X\# n\blow$. This contradicts the fact that $X$ and $X_\tau$ have different Seiberg-Witten invariants (cf.\ Proposition~\ref{exotic}). The lemma follows immediately. (We can easily adjust the embedding $R_r\subset R_S$ by an isotopy so that the negative double points of $R_r$ do not cluster in $R_S$.)
\end{proof}

\begin{Remark}\label{nonunique} As usual for small $\R^4$-homeomorphs, and unlike the large case, we cannot conclude that the manifolds $R^+_r$ are pairwise nondiffeomorphic, but only that they are nondiffeomorphic rel $K_S$, so the diffeomorphism types $R^+_r$ realize the cardinality of the continuum in ZFC set theory.  This phenomenon seems unavoidable. For example, given a compact subset $K\subset R$ of a small exotic $\R^4$, one can always construct a radial family containing $K$ that has several diffeomorphic members. (Given one radial family with $K\subset R'\subset R''\subset R$, we can end sum with a suitable radial family in $\R^4$ to get $K\subset R'\natural R''\subset R''\natural R'\subset R$ with $K$ in the first summand. The obvious diffeomorphism sends $K$ to the second summand.)
\end{Remark}


\section{$G$-effective, $G$-ribbon $\R^4$-homeomorphs}\label{Apps}

There are various examples in the literature of $G$-effective embeddings of $G$-corks. These typically can be seen to be $G$-ribbon corks (and stably $G$-effective), so Theorem~\ref{mainThm} and Addendum~\ref{main} immediately transform these to stably $G$-effective embeddings of $G$-ribbon $\R^4$-homeomorphs, arising with uncountably many (small) diffeomorphism types and extending to doubly uncountable families of (stably $G$-effectively embedded) large $\R^4$-homeomorphs. This section presents such corollaries of Theorem~\ref{mainThm}, without continued mention of uncountability. As discussed in Example~\ref{tau}, the small $\R^4$-homeomorph $R_S$ used throughout the paper is related in this way to Akbulut's original $\Z_2$-cork, which appears in the literature with many $\Z_2$-effective embeddings. Other $G$-corks with finite $G$ are given as
 $l$-fold boundary sums of copies of Akbulut's cork, so Lemma~\ref{doubly}(a) ($n=0$ case) implies the corresponding small $\R^4$-homeomorphs arise in uncountable families of the form $R(l\Delta)$ (without increasing $l$ through Theorem~\ref{mainThm}(b)), and these can all be assumed to be Stein. We first discuss these examples, then turn to infinite order corks, which arise from an entirely different construction. We begin by verifying that Akbulut's cork is the ribbon $\Z_2$-cork $C(\Delta,-1)$ \cite{BG}, and identify the involution. See \cite[Section~9.3]{GS} for more details on this cork, $R(\Delta)$ and their relation to h-cobordisms.

\begin{lem}\label{cork} Akbulut's cork $W$ is equivariantly diffeomorphic to the ribbon $\Z_2$-cork $C(\Delta,-1)$, where $\Delta$ is the ribbon disk shown in Figure~\ref{pretzel}, and its boundary involution $\tau_x$ is $\pi$-rotation in the plane of the paper. The same involution exhibits some $R(\Delta)$ as $R_S$ (as defined in Section~\ref{Review}).
\end{lem}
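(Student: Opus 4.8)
The plan is to treat this as a Kirby-calculus identification in which the $\Z_2$-symmetry is carried along at every step, so that most of the underlying diffeomorphism $W\approx C(\Delta,-1)$ is supplied by \cite{BG} and \cite[Section~9.3]{GS}, and the genuinely new content is the equivariance together with the simultaneous treatment of the compact cork and the open $R_S$. First I would read off a handle presentation of the ribbon-disk exterior $E(\Delta)$ directly from the ribbon move in Figure~\ref{pretzel}, so that attaching a $2$-handle along a $(-1)$-framed meridian $\mu$ of the pretzel knot $L=\partial\Delta$ produces a handle diagram for $C(\Delta,-1)$. I would then reduce this by Kirby moves to the standard symmetric two-component diagram for Akbulut's cork $W$ (a dotted unknot together with a $0$-framed companion), as carried out in \cite{BG}.

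For the involution I would observe that the $\pi$-rotation $\tau_x$ about the horizontal axis lying in the plane of the paper visibly preserves the $(-3,3,-3)$-pretzel knot together with its ribbon move; hence it preserves the oriented pair $(S^3,L)$ and is $L$-finite (Example~\ref{tau}). It therefore extends canonically over $\partial E(\Delta)$ (which is $0$-surgery on $L$), and since the single framing $-1$ is automatically $G$-invariant, it extends over the symmetric meridional $2$-handle to give an involution of $\partial C(\Delta,-1)$. The key point is to track $\tau_x$ through the reduction of the previous paragraph and verify that it is carried to the standard nonextendable Akbulut involution, namely the symmetry of the two-component diagram that exchanges the dotted circle with its $0$-framed companion. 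As Akbulut's involution is known to realize the nontrivial h-cobordism twist, this both identifies $\tau_x$ as the usual $\Z_2$-ribbon structure and confirms that it does not extend over $C(\Delta,-1)$.

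For the final sentence I would build $R_S=R(\Delta)$ from the same data. Section~\ref{Review} already records that $R_S$ is realizable as $R(\Delta)$ with $K_S=E'(\Delta)$, so it remains only to produce a compatible end involution. Attaching a Casson handle to the $0$-framed meridian $\mu$ in place of the $(-1)$-framed $2$-handle, chosen equivariantly as in the general construction preceding Definition~\ref{slice} (which is possible precisely because $\tau_x$ is $L$-finite), extends the same $\tau_x$ on $\partial E(\Delta)$ to a self-diffeomorphism at infinity of $R(\Delta)$. Because the compact and open involutions both arise by extending one and the same $\tau_x$ over $\partial E(\Delta)$---over a $2$-handle in one case and a Casson handle in the other---the resulting $\Z_2$-ribbon structures on $\partial C(\Delta,-1)$ and on the end of $R_S$ agree under the canonical identification of $\partial E(\Delta)$, as claimed.

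The main obstacle is not the existence of the abstract diffeomorphism $W\approx C(\Delta,-1)$, which is essentially routine and already in the literature, but the bookkeeping of the $\Z_2$-symmetry through the Kirby moves: I must ensure that the simplifying moves can be performed $\tau_x$-equivariantly, or else argue directly that regluing by $\tau_x$ and regluing by the standard Akbulut involution yield the same manifold, for instance via the relation $\tau_z=\tau_x\circ\tau_y$ with $\tau_y$ extending over $(B^4,\Delta)$, as in \cite{menag}. Keeping the symmetry visible in the diagram, rather than destroying it during simplification, is the delicate step.
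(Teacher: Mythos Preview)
Your approach is essentially the paper's: an equivariant Kirby-calculus identification, with the real work being to carry the $\Z_2$-symmetry through the moves. The paper runs the computation in the opposite direction, starting from the symmetric two-circle diagram (Figure~\ref{hCob}(a)) that \cite{BG} derives directly from the h-cobordism---so that the involution $\tau$ there is \emph{by construction} the h-cobordism involution on both $W$ and $R_S$---and then equivariantly simplifying until the pretzel knot with $\tau_x$ emerges. Going that way makes the second sentence of the lemma automatic: the same equivariant identification $(B,D,\tau)\approx(B^4,\Delta,\tau_x)$ simultaneously handles the cork and $R_S$. Your route instead requires an extra step: you manufacture a $\tau_x$-equivariant $R(\Delta)$, but you must still argue that its end involution is the one coming from the h-cobordism, i.e., that it agrees on $\partial E(\Delta)$ with the involution you have identified on the cork side. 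This is available from \cite{BG} or \cite[Section~9.3]{GS}, since both $W$ and $R_S$ are built there from the same $E(D)$ with the same boundary involution, but you should make that link explicit rather than leave it implicit in the final paragraph.

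One genuine correction: $\tau_x$ does \emph{not} preserve the ribbon move of Figure~\ref{pretzel}; that is $\tau_y$ (Example~\ref{tau}). This is not a harmless slip, because if $\tau_x$ preserved the ribbon move it would extend over $(B^4,\Delta)$ and hence over $C(\Delta,-1)$, contradicting precisely the nonextendability you are trying to establish. Your argument only needs $\tau_x$ to preserve the pair $(S^3,L)$ and be $L$-finite, which it is, so the logic survives once you delete the false clause. Relatedly, check your description of the axis: ``$\pi$-rotation in the plane of the paper'' here means rotation about the axis perpendicular to the page (so the planar diagram rotates by $180^\circ$), not rotation about a horizontal axis lying in the page.
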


\begin{proof} Figure~\ref{hCob}(a) shows a 1-handle and 2-handle that cancel to give a manifold $B$ diffeomorphic to $B^4$. There is a slice disk $D\subset B$, obtained from the obvious disk in $S^3$ avoiding the dotted circle, by pushing its interior into $\inter B^4$. The pair $(\partial B,\partial D)$ is preserved by $\pi$-rotation $\tau$ about the $z$-axis. This figure first appeared in \cite{BG}, arising from the h-cobordism generating $R_S$, and exhibiting $R_S$ as $R(D)$ with involution $\tau$. That paper also observed that the pair $(B,D)$ is diffeomorphic to $(B^4,\Delta)$ (as we are about to check). We obtain $E(D)$ by putting a dot on $\partial D$.  Adding a $(-1)$-framed meridian of $\partial D$ and equivariantly canceling, we obtain $C(D,-1)$ in Figure~\ref{hCob}(b), which is a known picture of Akbulut's cork with its involution (e.g.\ \cite{GS}). To identify $(B,D,\tau)$ as $(B^4,\Delta,\tau_x)$, equivariantly isotope  Figure~\ref{hCob}(a) to simplify the 1-2 pair, obtaining Figure~\ref{ED}(a). Interpreting the diagram as a 3-manifold and equivariantly sliding, then canceling the pair, gives the $\Z_2$-invariant knot of Figure~\ref{ED}(b). If we interpret the same computation nonequivariantly on the 4-manifold level, the cancellation follows a pair of dotted circle slides (given by the right arrow), introducing a pair of  ribbon moves. These represent saddle points in $\inter B^4$, so the 2-handle can be passed through them to cancel the 1-handle. The ribbons ultimately can be seen to be parallel, so one cancels against the local minimum between them. The remaining ribbon move appears in Figure~\ref{ED}(b). To complete the computation, fold the leftmost pair of arches of that diagram down by a $\pi$-rotation about the $y$-axis, leaving the rest of the knot fixed. This preserves the involution if we simultaneously rotate its axis from the $z$-axis to the $x$-axis. The result is equivariantly planar isotopic to Figure~\ref{pretzel} with involution $\tau_x$.
\end{proof}

\begin{figure}
\labellist
\small\hair 2pt
\pinlabel {(a)} at 0 0
\pinlabel {(b)} at 200 0
\pinlabel $0$ at 154 80
\pinlabel $0$ at 351 80
\pinlabel $\partial D$ at 39 80
\pinlabel $\tau$ at 63 178
\pinlabel $\tau$ at 260 178
\endlabellist
\centering
\includegraphics{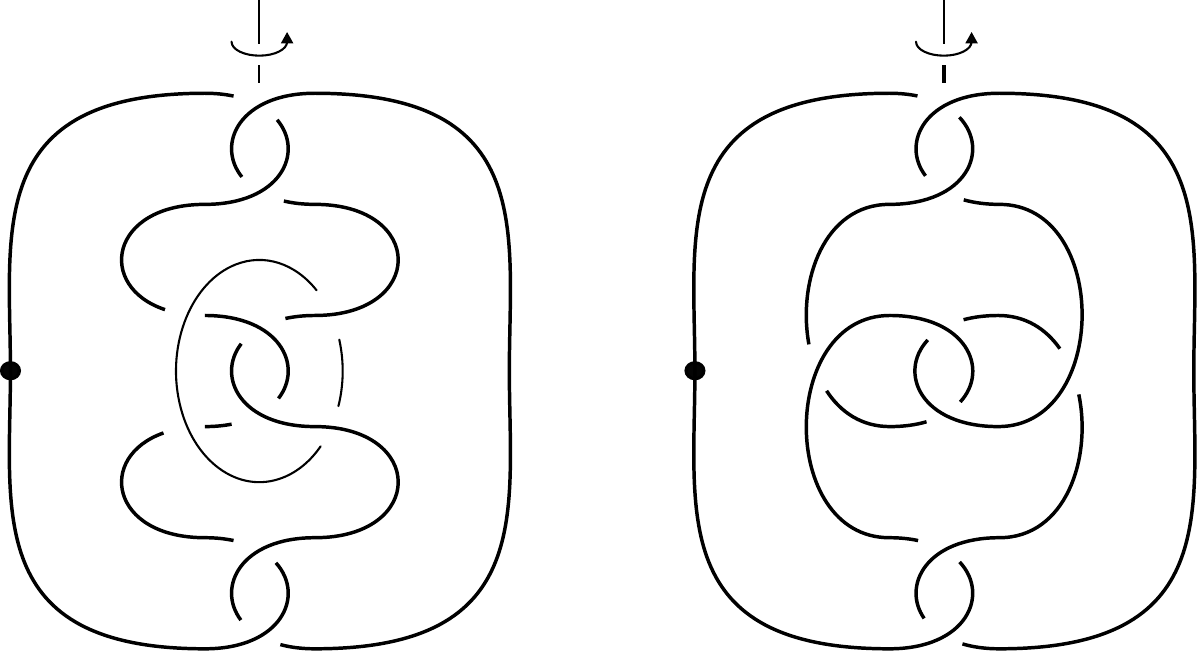}
\caption{(a) The ribbon disk $D$ and (b) Akbulut's cork $W$.}
\label{hCob}
\end{figure}

\begin{figure}
\labellist
\small\hair 2pt
\pinlabel {(a)} at 0 0
\pinlabel {(b)} at 188 0
\pinlabel $0$ at 93 114
\pinlabel $\tau$ at 61 123
\pinlabel $\tau$ at 257 123
\pinlabel $\partial D$ at 127 21
\endlabellist
\centering
\includegraphics{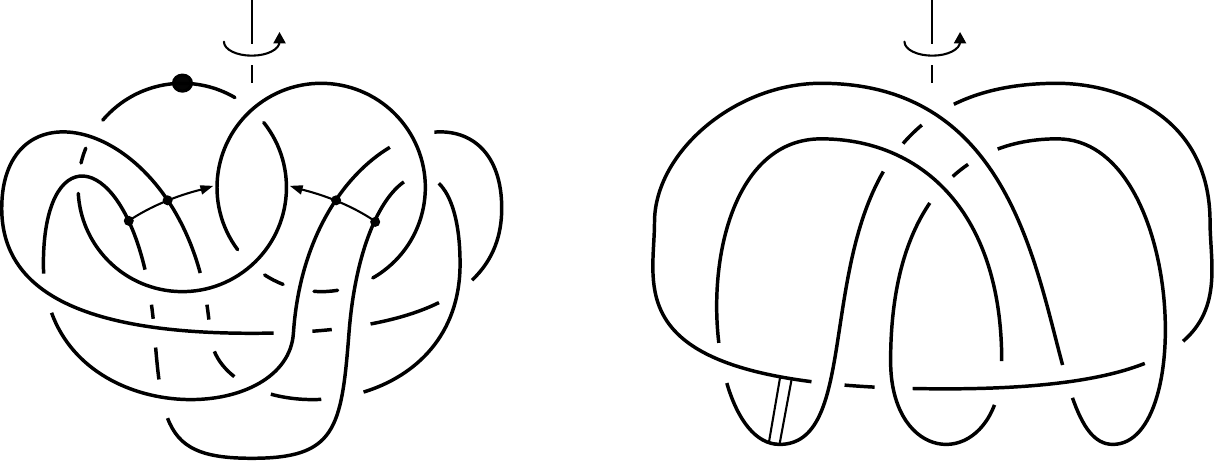}
\caption{Transforming $D$ to $\Delta$.}
\label{ED}
\end{figure}

Akbulut's cork is the simplest of an infinite family $\{W_n\st n\in\Z^+\}$ of $\Z_2$-corks arising in several papers of Akbulut and Yasui, e.g.\ \cite{AY}. We digress to observe that each of these yields an uncountable family of exotic $\Z_2$-ribbon $\R^4$-homeomorphs:

\begin{prop} Each $W_n$ is a $\Z_2$-ribbon cork of the form $C(D,m)$ with $m=(-1,\dots,-1)\in\Z^n$. 
\end{prop}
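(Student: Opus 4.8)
The plan is to show that each Akbulut--Yasui cork $W_n$ fits the template $C(D,m)$ with a ribbon link $D$ and framing vector $m=(-1,\dots,-1)$, and that the natural boundary involution is $L$-finite in the sense of Section~\ref{Corks}. The first task is simply to recall (or reproduce) the standard Kirby-calculus description of $W_n$ from the Akbulut--Yasui papers \cite{AY}. These corks are presented by a symmetric Kirby diagram consisting of $n$ dotted circles (1-handles) together with $n$ 2-handles, and the whole picture carries an obvious $\Z_2$-symmetry (a $\pi$-rotation) interchanging or preserving the handle pairs, analogous to the $n=1$ case treated in Lemma~\ref{cork}.

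First I would convert the dotted-circle/2-handle presentation into the ribbon-exterior form. The key structural observation is that a dotted circle together with a meridional $(-1)$-framed 2-handle is exactly the Kirby-diagrammatic signature of $E(D_i)$ capped by a $(-1)$-framed meridian handle, i.e.\ a piece of $C(D,-1)$. So I would identify the $n$ dotted circles as bounding an obvious collection of $n$ slice disks $D=D_1\sqcup\cdots\sqcup D_n$ in $S^3$, pushed into $\inter B^4$ (hence ribbon, since they come from disks in $S^3$ with no interior radial maxima), and identify each $(-1)$-framed 2-handle as the meridional 2-handle attached to the corresponding $L_i=\partial D_i$. This exhibits $W_n$ as $C(D,m)$ with $m=(-1,\dots,-1)$. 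Concretely this is a handle-slide and cancellation computation paralleling Figures~\ref{hCob} and~\ref{ED}: slide and cancel to strip the diagram down to the link $L=\partial D$ with its framings, verifying that the framing on each component is $-1$. I would emphasize that the ribbon property is automatic from the construction, so the ``ribbon'' qualifier in the statement requires no separate argument once the disks are identified.

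The remaining point is that the $\Z_2$-action qualifies these as $\Z_2$-\emph{ribbon corks} in the precise sense of Definition~\ref{slice}: the involution must be $L$-finite, and it must be the nonextendable involution making the embedding (stably) $\Z_2$-effective. $L$-finiteness is the easy part---the symmetry is a geometric $\pi$-rotation of $S^3$ preserving $L$, acting on each component through at most a reflectional $\Z_2$ on a tubular neighborhood, exactly as in Example~\ref{tau}. For nonextendability one invokes that $W_n$ is already known to be a $\Z_2$-cork (its defining property in \cite{AY}, with effective embeddings distinguishing $X$ from $X_\tau$ by Seiberg--Witten or Donaldson invariants); combined with the ribbon structure, Addendum~\ref{main} then upgrades this to stable $\Z_2$-effectiveness, but for the proposition itself I only need to match up the involution, which is immediate from the symmetric diagram.

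The main obstacle I anticipate is purely bookkeeping: carefully matching the published Kirby diagram of $W_n$---whose conventions and handle orientations vary between sources---to the $C(D,m)$ normal form, and checking that the symmetry in the published picture is genuinely the geometric involution preserving the disks $D$ rather than some other diagrammatic symmetry. This is exactly the kind of equivariant handle-slide verification carried out for $n=1$ in the proof of Lemma~\ref{cork}, so the honest content is to confirm that the same slides and cancellations go through equivariantly for general $n$ (the $n$ pairs behave independently, or in symmetric clusters, so no genuinely new phenomenon arises). I would therefore structure the proof as: (i) recall the symmetric diagram of $W_n$; (ii) perform the slides/cancellations to reach $L=\partial D$ with all framings $-1$, noting ribbon-ness; (iii) observe the $\pi$-rotation is $L$-finite and is the cork involution. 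No delicate analysis is needed beyond the diagrammatic reduction, so I expect the proof to be short.
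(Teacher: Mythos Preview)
There is a genuine gap in your proposal: you have the wrong picture of $W_n$. The Akbulut--Yasui corks are \emph{not} presented with $n$ dotted circles and $n$ 2-handles. Each $W_n$ is a Mazur-type manifold with a \emph{single} dotted circle and a \emph{single} $0$-framed 2-handle; the parameter $n$ enters only through the twist boxes (half-twist counts $2n+1$ and $2n+3$) in how the two components link. So your plan to read off $n$ slice disks directly from $n$ dotted circles, with the $n$ 2-handles already sitting as $(-1)$-framed meridians, cannot get started. In particular, the remark that ``the $n$ pairs behave independently, or in symmetric clusters'' reflects a structural misconception---there are no $n$ pairs in the standard diagram.

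The actual content of the proof is therefore the opposite of what you anticipate: one must \emph{introduce} $n$ new canceling 1-/2-handle pairs into the one-pair picture of $W_n$, producing a larger equivariant diagram in which the $C(D,m)$ structure becomes visible. The paper does exactly this: it exhibits an auxiliary diagram built from a Hopf-link core (one dotted circle, one $0$-framed circle, together giving $B^4$) plus $n$ concentric (dotted, $(-1)$-framed) pairs threaded through specific tangle boxes, then shows by canceling those $n$ pairs equivariantly from the inside out that one recovers the standard $W_n$. The ribbon disks $D$ are then the obvious disks bounded by the $n$ concentric dotted circles inside the $B^4$ coming from the Hopf pair---and one must still check that those dotted circles really form an unlink in the full picture, which requires a further observation (a global flype). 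None of this is ``pure bookkeeping''; designing the auxiliary diagram and verifying the cancellations is the proof. Your outline would need to be rebuilt around this step.
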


\begin{proof}
The lower right diagram of Figure~\ref{Wn} shows $W_n$, with its boundary involution given by $\pi$-rotation about the $z$-axis. The twist boxes count right {\em half}-twists. (To see that $W_1=W$, compare the bottom four half-twists with the top and bottom clasp of Figure~\ref{hCob}(b).) To show that $W_n$ is equivariantly diffeomorphic to the left diagram, cancel the $n$ concentric handle pairs of the latter from the inside out: First absorb the lowest half-twist of the lowest $\beta$ into the twist box below it. Then equivariantly cancel the handle pair, which eliminates the two lateral half-twists from $\beta$. Finally, absorb the remaining half-twist of $\beta$ into the twist box below it. To see that the left diagram is obtained from a ribbon complement in $B^4$ by adding ($-1$)-framed meridians, note that the dotted circles comprise an unlink, and that erasing the concentric handle pairs leaves a Hopf link. (The topmost $\beta$ and the lowest half-twist from each $n$-twist box together comprise a global half-twist that can be removed by a flype of the top half of the picture about the $z$-axis.)
\end{proof}

\begin{figure}
\labellist
\small\hair 2pt
\pinlabel $2n+3$ at 290 20
\pinlabel $2n+1$ at 290 76
\pinlabel $\beta$ at 189 202
\pinlabel $\beta$ at 83 161
\pinlabel $\beta$ at 83 95
\pinlabel $n$ at 60 185
\pinlabel $n$ at 109 185
\pinlabel $3$ at 83 69
\pinlabel $-1$ at 70 31
\pinlabel $-1$ at 70 2
\pinlabel $0$ at 115 51
\pinlabel $0$ at 350 6
\pinlabel {\huge $=$} at 250 197
\pinlabel {\huge $\sim$} at 201 74
\endlabellist
\centering
\includegraphics{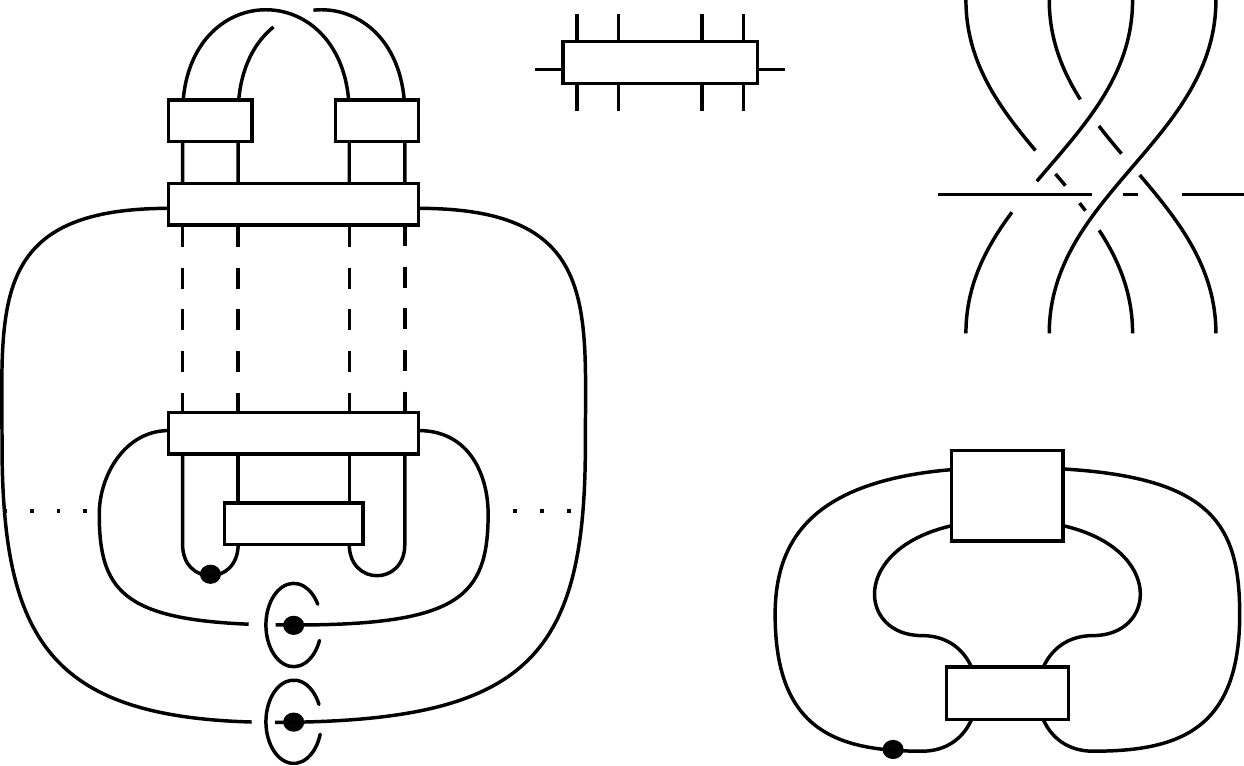}
\caption{The Akbulut-Yasui cork $W_n$ as a $\Z_2$-ribbon cork. The boxes count right {\em half}-twists, except for the $n$ boxes $\beta$ representing the pictured tangle.}
\label{Wn}
\end{figure}

Our first corollary on $G$-effective embeddings is obtained by transforming results of Auckly, Kim, Melvin and Ruberman \cite{AKMR}, notably the existence of $G$-corks of all finite orders.

\begin{cor}\label{akmr} (a) For every subgroup $G$ of $\SO(4)$ with finite order $n$, there is a \blowst $G$-effective embedding of a $G$-ribbon Stein exotic $\R^4$ of the form $R(n^2\Delta)$ into a closed, simply connected 4-manifold.
\item{(b)} For each $n\in\Z^+$, there is a $\Z_2$-ribbon Stein exotic $\R^4$ of the form $R(n\Delta)$, with $n$ disjoint embeddings into a fixed closed, simply connected 4-manifold $X$, such that the $n$ manifolds obtained from $X$ by twisting a single one of the images of $R(n\Delta)$ are all pairwise nondiffeomorphic.
\end{cor}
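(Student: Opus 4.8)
The plan is to import the finite-order $G$-corks of Auckly, Kim, Melvin and Ruberman \cite{AKMR}, together with their $G$-effective embeddings, recognize each as a \emph{ribbon} cork $C(D,m)$ carrying an $L$-finite $G$-action, and then feed this data into Theorem~\ref{mainThm}(a) and Addendum~\ref{main} to produce the advertised stably $G$-effectively embedded $R(D)$. The only real novelty is this transfer; the nontrivial smooth input (nonextendability of the group actions, detected by Donaldson-type invariants) is supplied by \cite{AKMR}. The bridge between corks and $\R^4$-homeomorphs is Lemma~\ref{cork}: Akbulut's cork is the ribbon cork $C(\Delta,-1)$, so any cork assembled from it by boundary sums is automatically a ribbon cork $C(D,m)$ with $D$ a disjoint union of copies of $\Delta$ and $m=(-1,\dots,-1)$.

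For part (a), I would first unwind the \cite{AKMR} construction of a $G$-cork for a finite subgroup $G\le\SO(4)$ of order $n$, observing that, under Lemma~\ref{cork}, it is a boundary sum of $n^2$ copies of Akbulut's cork on which $G$ acts by permuting summands, with each stabilizer acting on an individual summand either trivially or by the reflection $\tau_x$. This is exactly $C(n^2\Delta,m)$ with $m=(-1,\dots,-1)$, and the permutation-plus-reflection action is manifestly $L$-finite on $L=\partial(n^2\Delta)$. Next, since the $G$-effectiveness of \cite{AKMR} is witnessed by invariants that lose no information under blowing up, the embedding is in fact \blowst $G$-effective (cf.\ the discussion preceding Definition~\ref{slice}). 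Theorem~\ref{mainThm}(a) then yields an $R(n^2\Delta)$ embedded equivariantly along $\partial E'(n^2\Delta)$ inside $C(n^2\Delta,m)\#\blow$, hence inside the still closed and simply connected manifold $X\#\blow$; Addendum~\ref{main} promotes this to a stably $G$-effective, $G$-slice embedding. Finally, $E(\Delta)$ is Stein (Remark~\ref{mainrem}(c)) and boundary sums of Stein manifolds are Stein, so $E(n^2\Delta)$ is Stein, and Remark~\ref{mainrem}(c) lets me refine the Casson handles equivariantly so that $R(n^2\Delta)$ is Stein.

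For part (b), I would run the same machine on the \cite{AKMR} configuration of $n$ disjoint corks whose individual $\Z_2$-twists realize $n$ pairwise distinct diffeomorphism types. Each such cork is again a boundary sum of Akbulut corks, hence the ribbon cork $C(n\Delta,m)$ with its reflection involution, and the $n$ disjoint copies sit in a fixed closed, simply connected $X$. Choosing identical Casson handles on all $n$ summands, Theorem~\ref{mainThm}(a) and Addendum~\ref{main} replace each embedded cork by a copy of one fixed Stein $R(n\Delta)$, inside a fixed finite blowup of $X$. By the computation in the proof of the addendum, twisting the $i$-th image reglues only along the common piece $E'(n\Delta)$, so it reproduces the $i$-th \cite{AKMR} manifold up to that blowup; since the latter $n$ manifolds are distinguished by blowup-stable invariants, their counterparts remain pairwise nondiffeomorphic.

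The main obstacle is the identification carried out in the opening sentences of each part: confirming that the \cite{AKMR} corks are genuinely \emph{ribbon} corks of the stated form $C(D,m)$ equipped with an honestly $L$-finite action, rather than merely abstract contractible $G$-corks. This means tracing their boundary-sum construction through Lemma~\ref{cork}, checking that the induced action on $S^3$ permutes the components of $\partial(n^2\Delta)$ (resp.\ $\partial(n\Delta)$) while each stabilizer acts on a single component only by the identity or the reflection realizing $\tau_x$, and verifying that the framings can be normalized to $-1$. Once the ribbon structure and $L$-finiteness are established, the passage to $R(D)$ and the survival of (stable) $G$-effectiveness are immediate from Theorem~\ref{mainThm}(a) and Addendum~\ref{main}, and the Stein refinement is routine.
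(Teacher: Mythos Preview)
Your proposal is correct and follows essentially the same route as the paper: recognize the \cite{AKMR} corks as the ribbon corks $C(n\Delta,m)$ and $C(n^2\Delta,m)$ (boundary sums of Akbulut's cork via Lemma~\ref{cork}), note that Seiberg-Witten invariants make their effectiveness stable under blowup, and then apply Theorem~\ref{mainThm}(a) with Addendum~\ref{main} to pass to $R(n\Delta)$ and $R(n^2\Delta)$. Your explicit verification of $L$-finiteness and of the Stein refinement via Remark~\ref{mainrem}(c) spells out details the paper leaves implicit, but the strategy is identical.
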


\begin{proof} The corresponding theorem for corks is proved in \cite{AKMR}, with $R(n\Delta)$ replaced by the $n$-fold boundary sum of copies of $W$, which is $C(n\Delta,m)$ with each $m_i=-1$. They first prove the analog of (b), using an example of Akbulut and Yasui \cite{AY} to construct $n$ disjoint embeddings of $C(n\Delta,m)$ for which the corresponding twisted manifolds are distinguished by the number of their Seiberg-Witten basic classes. Our (b) then follows immediately from Theorem~\ref{mainThm} and the proof of Addendum~\ref{main}. For the cork version of (a), they ambiently connect each of their corks to a single 4-ball, to get an embedded $C(n^2\Delta,-1,\dots,-1)$ with a $G$-action. As in the proof of our Theorem~\ref{homeo} (which was inspired by the argument in \cite{AKMR}), they twist the ambient manifold by $\tau$ on one $C(n\Delta,m)$  summand, obtaining a new manifold with an embedded cork for which the boundary $G$-action is stably effective. We immediately deduce (a).
\end{proof}

\begin{Remarks} (a) In each case above, the induced group action at infinity on the exotic $\R^4$ is homeomorphically conjugate to the restriction of the standard linear action on $\R^4$ (as in Theorem~\ref{homeo} for (a) and the text following Definition~\ref{slice} for (b)).

\item{(b)}  To draw the $G$-corks constructed above, start with the $G$-action on $S^3$. In one fundamental domain, draw $n$ copies of Figure~\ref{hCob}(b), then transport these by $G$ to get $n^2$ copies. This shows the boundary $G$-action. To see the 4-manifold, interchange dots and zeroes in one fundamental domain. For $R(n^2\Delta)$, apply the same procedure to Figure~\ref{pretzel} with a suitably complicated Casson handle equivariantly attached to its meridian, and switch the ribbon move to the opposite side of the knots in one fundamental domain.
\end{Remarks}

Some finite groups $G$ cannot act effectively on any homology 3-sphere, e.g.\ \cite{Z}, in which case there can be no $G$-cork. For example, this holds for  $G=(\Z_2)^n$ with $n>4$ (or $n>3$ in our orientation-preserving setting) \cite[Proposition~3]{Z}. For this group, \cite{AKMR} constructs an effective {\em weak $G$-cork}, where $G$ is a subgroup of the boundary  diffeotopy group that need not lift to a $G$-action. In the $\R^4$ case, we have:

\begin{cor} If a group $G$ cannot act effectively on $S^3$, then there is no $G$-slice $\R^4$. But every $G\cong(\Z_2)^n$ injects into $\D^\infty(R(4^n\Delta))$ (for a suitably chosen Casson handle), such that for some embedding into a closed, simply connected 4-manifold $X$, the manifolds $X_g$ for all  $g\in G$ are pairwise nondiffeomorphic. In particular, the images of $G$ and $\D(R(4^n\Delta))$ intersect trivially.
\end{cor}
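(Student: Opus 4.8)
The plan is to bootstrap from the cork-theoretic result that $(\Z_2)^n$ is realized as an effective weak $G$-cork on a boundary sum of copies of Akbulut's cork, and to transport this through Theorem~\ref{mainThm} and Addendum~\ref{main} exactly as in the proof of Corollary~\ref{akmr}. The first statement is immediate: if $(R(D),G)$ were a $G$-slice $\R^4$, then by construction the $G$-action at infinity is homeomorphically conjugate to the original $G$-action on $S^3$ (extended by coning, as noted after Definition~\ref{slice}), so $G$ would act effectively on $S^3$, contradicting the hypothesis. Thus no nontrivial $G$ that fails to act on $S^3$ can inject into the cokernel via a genuine $G$-action.

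For the positive statement, I would start from the weak $G$-cork for $G\cong(\Z_2)^n$ produced in \cite{AKMR}, which lives on the boundary of a boundary sum of copies of Akbulut's cork $W=C(\Delta,-1)$, and comes with an embedding into a closed, simply connected $X$ for which the reglued manifolds $X_g$ are pairwise nondiffeomorphic (distinguished by Seiberg--Witten data). Since each nontrivial element of $(\Z_2)^n$ acts as a product of the boundary involutions $\tau$ on individual $W$-summands, and each such involution is the ribbon involution $\tau_x$ on $C(\Delta,-1)$ by Lemma~\ref{cork}, the relevant boundary action is realized on $C(m\Delta,(-1,\dots,-1))$ for an appropriate $m$. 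Counting summands as in the finite-group case, $m=4^n$ suffices to accommodate the $(\Z_2)^n$-worth of independent involutions. This embedding is stably $G$-effective because Seiberg--Witten invariants are unchanged by blowups away from the corks. Now apply Theorem~\ref{mainThm}(a) to replace this cork by an $R(4^n\Delta)$ embedded equivariantly along $\partial E'(4^n\Delta)$; the blowup is unnecessary since the framings are all $-1$... wait, these are odd, so a single blowup may be needed, but the resulting $X$ remains closed and simply connected. By Addendum~\ref{main}, the pairwise-nondiffeomorphic conclusion for the $X_g$ transfers to the $\R^4$ setting, and $G$ injects into $\D^\infty(R(4^n\Delta))/r(\D(R(4^n\Delta)))$.

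The trivial-intersection statement follows formally from the injection into the cokernel: if a nontrivial $g\in G\subset\D^\infty(R(4^n\Delta))$ also lay in $r(\D(R(4^n\Delta)))$, then its image in $\D^\infty(R)/r(\D(R))$ would be trivial, contradicting injectivity. Hence the images of $G$ and $\D(R(4^n\Delta))$ in $\D^\infty(R(4^n\Delta))$ intersect only in the identity. The Stein refinement, if desired, follows from Remark~\ref{mainrem}(c) by arranging an excess of positive double points in the Casson handles, though it is not asserted in this corollary.

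The main obstacle I expect is bookkeeping the group action across the transfer from corks to $\R^4$-homeomorphs: one must check that the $(\Z_2)^n$-action constructed on the cork boundary is genuinely $L$-finite (so that Theorem~\ref{mainThm} applies), and that it survives Theorem~\ref{mainThm}(a)'s equivariant replacement of 2-handle cores by Casson handles. Since each generator acts by the ribbon involution $\tau_x$ (a reflection of the $S^1\times D^2$ neighborhood of each relevant link component), $L$-finiteness holds by the analysis in Example~\ref{tau}, and the equivariant refinement of the Casson handles is exactly the step carried out at the end of the proof of Theorem~\ref{mainThm}. The only genuinely new input beyond Corollary~\ref{akmr} is the nonexistence half, which is a direct consequence of the coning extension of the action, so the real content is simply invoking \cite{AKMR}'s weak-cork construction in place of their genuine $G$-cork and running the same machinery.
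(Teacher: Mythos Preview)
Your overall strategy (transport the weak cork from \cite{AKMR} through Theorem~\ref{mainThm}(a) and Addendum~\ref{main}) matches the paper's, but there is a genuine gap in the execution that your own first paragraph should have alerted you to. You propose to verify that the $(\Z_2)^n$-action on the cork boundary is $L$-finite and then apply Theorem~\ref{mainThm}(a) with $G=(\Z_2)^n$. But Theorem~\ref{mainThm} requires an $L$-finite $G$-action on the pair $(S^3,L)$, and for $n$ large enough, $(\Z_2)^n$ admits \emph{no} effective action on $S^3$ whatsoever---this is precisely the content of the first sentence of the corollary. So the hypothesis you plan to check cannot hold, and the machinery cannot be invoked with $G=(\Z_2)^n$.

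The paper resolves this by observing that in the \cite{AKMR} construction the $2^n$ cork summands $C(2^n\Delta,m)$ sit as leaves on a thickened binary tree, and the generators of $(\Z_2)^n$ lift to a genuine $\Z^n$-action on $C(4^n\Delta,m)$ (and on $(S^3,L)$). This $\Z^n$-action \emph{is} $L$-finite, so Theorem~\ref{mainThm}(a) applies with $G=\Z^n$. The point is that each even element of $\Z^n$ is a twist supported on disjoint 3-balls away from the leaves, hence isotopic to the identity; thus the $\Z^n$-action at infinity on $R(4^n\Delta)$ factors through $(\Z_2)^n$ in $\D^\infty$. Your description of the action as ``a product of the boundary involutions $\tau$ on individual $W$-summands'' is also not what \cite{AKMR} does: the weak-cork action permutes the leaves of the tree rather than applying $\tau_x$ independently to summands, and it is only after twisting one leaf (the AKMR trick, as in Theorem~\ref{homeo}) that effectiveness is obtained. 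The remaining steps---stable effectiveness from Seiberg--Witten, the blowup bookkeeping, and the trivial-intersection conclusion---are as you describe.
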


\begin{proof} The first sentence follows immediately from the definition of a $G$-slice $\R^4$. The cork version of the rest is proved in \cite{AKMR} by locating $2^n$ disjoint $\Z_2$-corks $C(2^n\Delta,m)$ in a suitable $X$ by their analog of Corollary~
\ref{akmr}(b), then connecting them as leaves on a thickened binary tree to get an embedding of  $C=C(4^n\Delta,m)$. The group $(\Z_2)^n$ acts on the tree, freely on the leaves. Its generators lift to a $\Z^n$-action on $C$ that factors through $(\Z_2)^n$ to $\D(C)$. (Every even element is a twist on disjoint 3-disks in $B^4$ that can be undone by an isotopy avoiding the leaves.) As before, twisting on one leaf gives the required cork. Theorem~\ref{mainThm}(a) with $G=\Z^n$ gives the corresponding ribbon $\R^4$.
\end{proof}

We also have an exotic $\R^4$ version of \cite[Theorem~1.2]{AY}:

\begin{cor} For any fixed $n\ge2$, there is an exotic Stein $\Z_2$-ribbon $\R^4$ of the form $R(\Delta)$ and a family of (nondisjoint) embeddings of it into $X=\#(2n-1)\CP^2\#(10n+1)\blow$ for which the manifolds obtained by twisting realize infinitely many diffeomorphism types. In fact, for any knot $K\subset S^3$, the manifold $E(n)_K\#2\blow$ obtained from the elliptic surface $E(n)$ by the Fintushel-Stern knot construction and blowing up can be obtained in this manner.
\end{cor}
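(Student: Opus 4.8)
The plan is to invoke the cork version of this statement, due to Akbulut and Yasui \cite[Theorem~1.2]{AY}, and then transfer it to the $\R^4$-homeomorph setting by the same machinery used in the previous corollaries, namely Theorem~\ref{mainThm}(a) together with the proof of Addendum~\ref{main}. Akbulut and Yasui exhibit Akbulut's cork---which by Lemma~\ref{cork} is the ribbon $\Z_2$-cork $C(\Delta,-1)$ with boundary involution $\tau_x$---embedded in a completely decomposable $4$-manifold by a family of (nondisjoint) embeddings indexed by knots $K\subset S^3$, so that twisting the embedding associated to $K$ by $\tau_x$ yields a blowup of the Fintushel-Stern knot-surgered manifold $E(n)_K$. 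First I would record the precise blowup count appearing in their statement and check that these embeddings are stably $\Z_2$-effective. This is immediate: the twisted manifolds are distinguished by their Seiberg-Witten invariants, whose basic classes encode the Alexander polynomial $\Delta_K(t)$, and Seiberg-Witten information survives blowing up in the complement of the cork.

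Next I would apply Theorem~\ref{mainThm}(a) to replace $C(\Delta,-1)$ by an exotic $R(\Delta)$ embedded equivariantly along $\partial E'(\Delta)$. Since the framing $m=-1$ is odd, this costs a single blowup of the ambient manifold in the complement of $E'(\Delta)$, which is exactly what accounts for the extra $\blow$-summand in the stated $X=\#(2n-1)\CP^2\#(10n+1)\blow$ and upgrades each twisted manifold to $E(n)_K\#2\blow$. By Lemma~\ref{cork} the boundary involution $\tau_x$ corresponds to the involution at infinity exhibiting $R(\Delta)$ as $R_S$, and by the proof of Addendum~\ref{main} the manifold $X_{\tau_x}$ obtained from either the cork or $R(\Delta)$ depends only on excising $E'(\Delta)$ and regluing by $\tau_x$. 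Hence the twisted diffeomorphism types are unchanged by the transfer, and the inherited family of embeddings of $R(\Delta)$ realizes precisely the manifolds $E(n)_K\#2\blow$. The Stein condition comes from Remark~\ref{mainrem}(c): since $E(\Delta)$ is Stein (Example~\ref{tau}, \cite{Ann}), the Casson handle attached to the meridian can be chosen with an excess of positive double points so that the resulting $R(\Delta)$ is Stein.

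Finally, infinitude of diffeomorphism types follows because $\{E(n)_K\st K\subset S^3\}$ contains infinitely many types when $n\ge2$: the Fintushel-Stern formula expresses the Seiberg-Witten invariant of $E(n)_K$ in terms of $\Delta_K(t)$, infinitely many Alexander polynomials occur, and the blowup formula for $\#2\blow$ preserves these distinctions. The main obstacle I anticipate is not the transfer machinery, which is routine given the earlier sections, but rather the careful identification of the Akbulut-Yasui cork with $C(\Delta,-1)$ and of their boundary involution with $\tau_x$, together with the exact blowup bookkeeping needed to land on $E(n)_K\#2\blow$ rather than on a manifold carrying a spurious additional $\blow$-summand. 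Once the odd-framing blowup of Theorem~\ref{mainThm}(a) is matched against the blowups already present in the Akbulut-Yasui construction, the corollary follows.
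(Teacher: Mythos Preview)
Your proposal is correct and follows essentially the same route as the paper: invoke the Akbulut--Yasui cork result (phrased from the reversed side as a family of embeddings of $W=C(\Delta,-1)$ in the completely decomposable $X'=\#(2n-1)\CP^2\#10n\blow$), then transfer to $R(\Delta)$ via Theorem~\ref{mainThm}(a) and the proof of Addendum~\ref{main}, picking up one extra $\blow$ from the odd framing $m=-1$. You spell out the Stein justification (Remark~\ref{mainrem}(c)) and the infinitude via Alexander polynomials that the paper leaves implicit; the paper in turn adds a parenthetical remark that the final blowup can actually be avoided by tubing into an odd sphere in $E(n)\#\blow-F-W$, which you do not mention but is not needed for the stated corollary.
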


\begin{proof} Akbulut and Yasui start with an embedding of $W$ into $E(n)\#\blow$. They show that performing the Fintushel-Stern construction on $E(n)\#\blow$ using a fiber $F$ disjoint from $W$ and any knot, followed by twisting $W$, always yields the same manifold $X'$. In fact, this $X'$ is $\#(2n-1)\CP^2\#10n\blow$ \cite[Exercise~9.3.4]{GS}, giving the cork analog of the theorem. Theorem~\ref{mainThm} and Addendum~\ref{main} complete the proof after a single blowup of $X'$. (In fact, this last blowup can be avoided by using the proof of Theorem~\ref{mainThm}(a) for even $m$, after tubing the 2-handle core in the $\Z_2$-cork into an immersed odd sphere, since $E(n)\#\blow-F-W$ is simply connected and not spin.)
\end{proof}

Perhaps this theorem and its cork analog are not surprising. For simply connected, closed 4-manifolds, there seems to be a sense in which ``most" cut-and-paste constructions (outside a narrow range of standard operations) yield connected sums of copies of $\pm \CP^2$. This raises the following question.

\begin{ques} Is there a manifold $X$ whose Seiberg-Witten invariants are nontrivial, a fixed $\Z_2$-cork or exotic $\R^4$ with involution at infinity, and a family of embeddings of it in $X$ for which twisting yields infinitely many diffeomorphism types?
\end{ques}

Now we turn to a very different construction of corks. An easy way to construct a ribbon disk is to start with any nontrivial knot $\kappa$, interpreted as a tangle in the 3-ball, and cross the pair with $I$. The result is a ribbon disk $D_\kappa$ for the knot $\kappa\# -\kappa$. For each nonzero $m\in\Z$, the homology sphere $\partial C(D_\kappa,m)$ has an incompressible torus $T$, namely the boundary of $B^3$ minus a tubular neighborhood of $\kappa$. There is a self-diffeomorphism $f$ of $\partial C(D_\kappa,m)$, supported in a product neighborhood $I\times T$ of $T$, that rotates each $\{t\}\times T$ through an angle $2\pi t$ parallel to the longitude of $\kappa$ (up to smoothing near $t=0,1$). For the double twist knot obtained from the lower right diagram of Figure~\ref{Wn} by replacing the two twist boxes with $r$ right and $s$ left full twists, respectively, let $D_{r,s}$ be the corresponding ribbon disk. The main theorems of \cite{InfCork} state that for $r,s>0>m$, $f$ generates a boundary $\Z$-action for which $C(r,s;m)=C(D_{r,s},m)$ is a $\Z$-cork with a $\Z$-effective embedding in a closed, simply connected 4-manifold $X$. Unlike for previous examples, when none of $r,s,|m|$ is 2, we can assume each $X_{f^n}$ is {\em irreducible}, so not a connected sum of two manifolds with nontrivial homology. In particular, it is not a blowup. The corresponding theorem for ribbon $\R^4$-homeomorphs is the following:

\begin{cor}\label{Zcork} For every $r,s\in\Z^+$, there exists a $\Z$-ribbon exotic $\R^4$  of the form $(R(D_{r,s}),f)$ with a \blowst $\Z$-effective embedding into a closed, simply connected 4-manifold $X$. If neither $r$ nor $s$ is 2, then  each $X_{f^n}$ can be assumed irreducible.
\end{cor}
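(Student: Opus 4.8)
The plan is to deduce this statement from Theorem~\ref{mainThm}(a) and Addendum~\ref{main}, exactly as the finite-order corollaries in this section are deduced from their cork counterparts in \cite{AKMR}. First I would invoke the main theorems of \cite{InfCork}, which supply the $\Z$-cork $(C(D_{r,s},m),f)$ together with a $\Z$-effective embedding into a closed, simply connected $X$, for any $r,s>0>m$. The boundary diffeomorphism $f$ is the longitudinal rotation supported in a product neighborhood $I\times T$ of the incompressible torus $T=\partial(B^3\setminus\nu\kappa)$, so its natural domain is $\partial C(D_{r,s},m)$, which is precisely $0$-surgery data compatible with the meridional framing; in particular $f$ fixes the exterior $E(D_{r,s})$ and so descends to an action at infinity on $R(D_{r,s})$ in the sense set up before Definition~\ref{slice}. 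Since $G=\Z$ is abelian and $f$ is supported away from the slice disks, the action is $L$-finite (indeed the link-level action is trivial on $L$), so the hypotheses of Theorem~\ref{mainThm} are met.

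Next I would apply Theorem~\ref{mainThm}(a): for the $G$-invariant framing vector $m$ (automatically invariant since $G$ acts trivially on the single-component index set here), the cork $C(D_{r,s},m)\#\blow$ contains some $R(D_{r,s})$ embedded equivariantly along $\partial E'(D_{r,s})$, and the extra blowup is unnecessary when $m$ is even. Addendum~\ref{main} then upgrades the given stably $\Z$-effective embedding of the cork into $X$ to a stably $\Z$-effective embedding of $R(D_{r,s})$ into the corresponding blowup $X^*$. Because cutting out and regluing happens entirely along $\partial E'(D_{r,s})$, the reglued manifolds $X^*_{f^n}$ coincide with the cork-reglued manifolds $(X\#\blow)_{f^n}$, so no diffeomorphism information is lost and $\Z$ injects into $\D^\infty(R(D_{r,s}))/r(\D(R(D_{r,s})))$. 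This gives the first sentence. For the Stein refinement claimed in the statement, I would appeal to Remark~\ref{mainrem}(c): $E(D_{r,s})$ (a ribbon complement) is Stein, so after equivariantly refining the Casson handles to carry a suitable excess of positive double points the resulting $R(D_{r,s})$ is Stein, and contractibility yields a biholomorphic embedding when $X^*$ is complex.

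The irreducibility assertion is the delicate point, and I expect it to be the main obstacle. In \cite{InfCork} the manifolds $X_{f^n}$ are shown to be irreducible whenever none of $r,s,|m|$ equals $2$; here I must preserve irreducibility of the reglued manifolds after the single blowup introduced by Theorem~\ref{mainThm}(a). The resolution is to choose $m$ \emph{even} and sufficiently negative (with $|m|\neq 2$), so that the blowup in Theorem~\ref{mainThm}(a) is unnecessary and $X^*=X$; then $X^*_{f^n}=X_{f^n}$ is literally the irreducible manifold produced in \cite{InfCork}. Since the hypothesis ``$|m|\neq 2$'' is free to arrange (we may take $m$ as negative as we like), the only constraints that must survive are on $r$ and $s$, which is exactly the stated condition that neither equals $2$. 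Thus the main work is verifying that an even, very negative $m$ still yields a $\Z$-cork with $\Z$-effective irreducible twists — which is covered by the $r,s>0>m$ range of the theorems in \cite{InfCork} — and checking that the even-framing case of Theorem~\ref{mainThm}(a) indeed produces $R(D_{r,s})$ with the same reglued manifolds, so that no spurious connected-sum summand is introduced. I would close by noting, as in the final parenthetical of the preceding corollary, that the even-$m$ route also keeps the construction within the ``blowup unnecessary'' regime, matching the amphichirality discussion promised in Remark~\ref{Zrem}(c).
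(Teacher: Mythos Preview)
Your overall strategy matches the paper's: apply Theorem~\ref{mainThm}(a) and Addendum~\ref{main} to the $\Z$-cork $C(D_{r,s},m)$ from \cite{InfCork}, choosing $m$ even (the paper takes $m=-4$) so that no blowup is introduced and the irreducibility from \cite{InfCork} passes directly to the $X_{f^n}$. That part is fine.

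There are, however, two problems. First, Addendum~\ref{main} requires a \emph{stably} $G$-effective embedding of the cork, but you only invoke the $\Z$-effective embedding supplied by \cite{InfCork} and then silently upgrade it to ``stably $\Z$-effective'' in the next sentence. This is exactly the point the paper pauses to justify: the twisted manifolds $X_{f^n}$ arise via the Fintushel--Stern knot construction and are distinguished by Seiberg--Witten invariants (via Sunukjian \cite{S}), an argument that survives blowups. Without this step your application of the addendum is unjustified.

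Second, your paragraph on a ``Stein refinement claimed in the statement'' is spurious and in fact wrong. Corollary~\ref{Zcork} makes no Stein claim, and Remark~\ref{mainrem}(c) only applies when $E(D)$ is known to be Stein. The paper explicitly records (Remark~\ref{Zrem}(b)) that it is \emph{unclear} whether any $E(D_{r,s})$, $C(D_{r,s},m)$, or $R(D_{r,s})$ admits a Stein structure, so your assertion that ``$E(D_{r,s})$ (a ribbon complement) is Stein'' is not known. Drop this paragraph entirely.
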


\begin{proof} Apply Theorem~\ref{mainThm}(a) and Addendum~\ref{main} to the given embedding $C(r,s;-4)\subset X$. Since $m=-4$ is even, no blowing up is necessary. (To verify that the cork is {\em stably} $\Z$-effectively embedded as required, note that the twisted manifolds $X_{f^n}$ were also obtained from $X$ by the Fintushel-Stern knot construction, so distinguished by their Seiberg-Witten invariants via a theorem of Sunukjian \cite{S}. The same argument applies after blowing up $X$.)
\end{proof}

\begin{Remarks}\label{Zrem} (a) The $\Z$-action on $(S^3,\partial D_{r,s})$ is trivial away from $T$, so we can take $n=1$ in Theorem~\ref{mainThm}(b) and (c). We obtain uncountably many diffeomorphism types of the form $R(D_{r,s}\sqcup \Delta)$, $\Z$-effectively embedded in $X$ (irreducible for $r,s\ne2$), and a doubly uncountable family in $X\# \blow$. It seems likely that there are already uncountably many of the form $R(D_{r,s})$ in $X$. To prove this, it would suffice to adapt the end-periodic gauge theory of \cite{DF} to a pair of manifolds related by a twist on an embedded $R(D_{r,s})$ (as was done in \cite{menag} for a version of $R_S$ in $K3\#\blow$).

\item[(b)] Unlike the manifolds made from $n\Delta$, it is unclear whether any $R(D_{r,s})$, $E(D_{r,s})$ or $C(D_{r,s},m)$ admits a Stein structure.

\item[(c)] Since every $D_\kappa\subset I\times B^3$ is preserved under reflection of the $I$-factor, we can arrange $R(D_{r,s})$  to admit an orientation-reversing self-diffeomorphism by suitably refining the Casson handle. This contrasts with the chirality typically underlying 4-dimensional constructions. The fact that the $r$- and $s$-twists of the double twist knot must be in opposite directions is also surprising. Note that if we use $+1$-twists in both places (so $\kappa$ is a trefoil), all boundary diffeomorphisms of the resulting contractible $C(1,-1;-1)$ extend over it \cite{Cork2}, so it cannot be a $G$-cork for any nontrivial $G$.

\item[(d)] The end of each $R(D_\kappa)$ displays structure that isn't apparent in more general $\R^4$-homeomorphs. The incompressible torus $T$ in $\partial E(D_\kappa)$ determines a proper embedding $\tilde T$ of $T^2\times[0,\infty)$ into $R(D_\kappa)$, near which the above $\Z$-action at infinity is supported. The pair $(R(D_\kappa),\tilde T)$ is homeomorphic at infinity to $(S^3,T)\times[0,\infty)$, diffeomorphically away from $(\kappa\# -\kappa)\times[0,\infty)$. The end of $R(D_\kappa)$ is split by $\tilde T$ into two pieces. One side is diffeomorphically $(S^3-\nu\kappa)\times[0,\infty)$, so $\tilde T$ is incompressible there in the usual sense of $\pi_1$-injectivity (at infinity). The other side is homeomorphic to $S^1\times D^2\times[0,\infty)$, so $\pi_1$-injectivity fails. However, there can be no smooth spanning solid tori near infinity in $R(D_{r,s})$ ($r,s>0$) by Proposition~\ref{exotic}. In comparison, our previous constructions depended on end sums, so splitting along proper embeddings of $\R^3$, or of $S^2\times[0,\infty)$ at infinity. It is unclear whether any of this structure has deeper significance or is just an artifact of the constructions. One might hope that such splitting 3-manifolds at infinity play a role analogous to essential spheres and incompressible tori in studying diffeotopy of 3-manifolds. However, it seems unlikely that there is any analog of JSJ-decompositions.
\end{Remarks}

Tange \cite{Tn3} recently showed that the above $\Z$-corks can be boundary-summed together to create $\Z^k$-corks with (stably) $\Z^k$-effective embeddings, generated by $k$ incompressible tori as above. He also gave restrictions \cite{Tn2} on what families can be realized by twisting on $G$-corks, most notably for infinite groups $G$. These results translate to ribbon $\R^4$-homeomorphs:

\begin{cor} (a) For every $k\in\Z^+$, there exists a $\Z^k$-ribbon exotic $\R^4$ with a \blowst $\Z^k$-effective embedding into a closed, simply connected 4-manifold.

\item[(b)] Suppose a family of homeomorphic closed, oriented 4-manifolds with $b_+\ge3$ realizes infinitely many isomorphism classes of mod 2 Ozsv\' ath-Szab\' o invariants. Then the family cannot be realized by twisting a fixed embedding of a $G$-slice $\R^4$.
\end{cor}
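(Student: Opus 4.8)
The plan for (a) is to repeat the argument of Corollary~\ref{Zcork} with Tange's $\Z^k$-cork in place of a single $\Z$-cork. Tange \cite{Tn3} obtains his $\Z^k$-cork as a boundary sum of $k$ of the $\Z$-corks $C(D_{r_i,s_i},m_i)$ of \cite{InfCork}; this is again a cork of the form $C(D',m')$, with $D'=D_{r_1,s_1}\sqcup\cdots\sqcup D_{r_k,s_k}$ ribbon and with $m'$ having all even entries (take each $m_i=-4$, as in Corollary~\ref{Zcork}). Its generating $\Z^k$-action consists of the $k$ commuting torus twists, each supported near an incompressible torus disjoint from the meridians carrying the Casson handles; as in the proof of Corollary~\ref{Zcork}, each such twist therefore extends over the Casson handle construction to give a $\Z^k$-action at infinity on $R(D')$. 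Since the entries of $m'$ are even, Theorem~\ref{mainThm}(a) embeds this $R(D')$ in the given closed, simply connected manifold (with no blowup needed), equivariantly along $\partial E'(D')$, and Addendum~\ref{main} promotes Tange's stably $\Z^k$-effective embedding of the cork to a stably $\Z^k$-effective embedding of the resulting $\Z^k$-ribbon exotic $\R^4$. This is exactly (a).

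For (b), the plan is to reduce the nonrealizability statement to Tange's corresponding restriction \cite{Tn2} for $G$-corks, via the dictionary of Theorem~\ref{mainThm}(d) and Addendum~\ref{main}. Suppose for contradiction that a fixed embedding $R(D)\subset X_0$ of a $G$-slice $\R^4$ realizes the given family $\{(X_0)_g\}$ by twisting. By Theorem~\ref{mainThm}(d), for a sufficiently negative $G$-invariant $m$ there is an $l\in\Z^+$ with $C(D,m)$ embedded in $R(D)\#l\blow$, equivariantly except on $E'(D)$, the $l$ blowups occurring at double points inside the Casson handles and hence in the complement of $E'(D)$. Performing these same blowups on $X_0$ yields $C(D,m)\subset X_0\#l\blow$ agreeing with $R(D)$ along $\partial E'(D)$. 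As in the proof of Addendum~\ref{main}, both the $R(D)$-twist $(X_0)_g$ and the cork twist are obtained by excising $E'(D)$ and regluing by $g$; since the blowups are disjoint from $E'(D)$, this cut-and-paste commutes with them, giving $(X_0\#l\blow)_g=(X_0)_g\#l\blow$ for every $g$. Thus the fixed $G$-cork embedding $C(D,m)\subset X_0\#l\blow$ realizes by twisting precisely the family $\{(X_0)_g\#l\blow\}$.

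It then remains to check that this blown-up family still meets the hypotheses of \cite{Tn2}. Homeomorphism type and the condition $b_+\ge3$ are preserved, since $\#l\blow$ is a fixed operation contributing only to $b_-$. The delicate point, which I expect to be the main obstacle, is that $\#l\blow$ must preserve the realization of infinitely many isomorphism classes of mod 2 Ozsv\'ath--Szab\'o invariants. Here the plan is to invoke the Ozsv\'ath--Szab\'o blowup formula: it recovers the mod 2 invariant of a manifold $Y$ from that of $Y\#\blow$ (the exceptional classes being intrinsically recognizable), so the induced map on isomorphism classes of mod 2 invariants is injective. Consequently the blown-up family $\{(X_0)_g\#l\blow\}$ realizes infinitely many such classes precisely because $\{(X_0)_g\}$ does. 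Granting this, $\{(X_0)_g\#l\blow\}$ is a family of homeomorphic closed, oriented $4$-manifolds with $b_+\ge3$ realizing infinitely many mod 2 Ozsv\'ath--Szab\'o isomorphism classes and arising by twisting a fixed $G$-cork embedding, contradicting \cite{Tn2}. Hence no such $G$-slice $\R^4$ exists, establishing (b).
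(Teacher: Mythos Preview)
Your proof is correct and follows essentially the same route as the paper's: for (a) you apply Theorem~\ref{mainThm}(a) and Addendum~\ref{main} to Tange's $\Z^k$-cork (a boundary sum of $\Z$-corks $C(D_{r_i,s_i},m_i)$ with even $m_i$), and for (b) you reduce to Tange's restriction \cite{Tn2} via Theorem~\ref{mainThm}(d), exactly as the paper does. The paper's proof of (b) is a single sentence that does not spell out the blowup compatibility you flag as the ``delicate point''; your treatment is more explicit here than the paper's, though your justification via intrinsic recognizability of exceptional classes is still somewhat informal.
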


\noindent As \cite{Tn2} notes, families as in (b) can be easily made, for example, by the Fintushel-Stern construction.

\begin{proof}
For (a), apply the previous method to Tange's examples, obtaining a stably $\Z^k$-effective embedding of $R(\sqcup^k_{i=1} D_{r_i,s_i})$. (Tange exhibits the corks when $r_i=s_i=1$, but any positive integers can be realized by blowing up as in \cite{InfCork}.) If the family in (b) could be realized by twisting a $G$-slice $\R^4$, then by Theorem~\ref{mainThm}(d), it could be realized by a $G$-slice cork after blowups, contradicting \cite{Tn2}. 
\end{proof}


\end{document}